\newcommand{\sample}{343}
\providecommand{\meas}[1]{\abs{#1}}
\providecommand{\W}[2]{\ensuremath{W^{#1,#2}}}
\providecommand{\WBiot}{W_{\subtext{Biot}}}
\providecommand{\distspace}{\dist}
\providecommand{\casesif}{\;\textnormal{if }\;}
\providecommand{\casesotherwise}[1][]{\;\textnormal{otherwise#1}}
\providecommand{\quoteref}[1]{\enquote{\emph{#1}}}
\providecommand{\Wdist}{W_{\subtext{dist}}}
\providecommand{\Lcal}{\mathcal{L}}
\renewcommand{\grad}{{\operatorfont D}\.}
\providecommand{\hide}[1]{}
\tikzstyle{plotstyle}=[samples=\sample, color=black, very thick]
\tikzstyle{biotstyle}=[samples=\sample, color=black, very thick, dashed]
\tikzstyle{diststyle}=[samples=\sample, color=R, thick]
\tikzstyle{Qdiststyle}=[samples=\sample, color=B, ultra thick]
\title{Quasiconvex relaxation of planar Biot-type energies and the role of determinant constraints}
\begin{document}
\maketitle
\begin{abstract}
	\noindent We derive the quasiconvex relaxation of the Biot-type energy density $\norm{\sqrt{\grad\varphi^T \grad\varphi}-\id_2}^2$ for planar mappings $\varphi\colon\R^2\to \R^2$ in two different scenarios. First, we consider the case $\grad\varphi\in\GLp(2)$, in which the energy can be expressed as the squared Euclidean distance $\dist^2(\grad\varphi,\SO(2))$ to the special orthogonal group $\SO(2)$. We then allow for planar mappings with arbitrary $\grad\varphi\in\R^{2\times 2}$; in the context of solid mechanics, this lack of determinant constraints on the deformation gradient would allow for self-interpenetration of matter. We demonstrate that the two resulting relaxations do not coincide and compare the analytical findings to numerical results for different relaxation approaches, including a rank-one sequential lamination algorithm, trust-region FEM calculations of representative microstructures and physics-informed neural networks.
\end{abstract}
\vspace*{.49em}%
\keywords{Biot strain measure, geometrically nonlinear models, quasiconvex relaxation, quasiconvex envelope, rank-one sequential lamination, neuronal networks, PINN, microstructure, material stability, Legendre-Hadamard ellipticity} 
\vspace*{.49em}%
\msc{74A05, 74A60, 74B20, 74G65
}%
\tableofcontents

\section{Introduction}

In the context of nonlinear hyperelasticity, we consider the deformation $\varphi\col\Omega\to\R^n$ of an elastic body $\Omega\subset\R^n$ and the basic minimization problem
\begin{equation}\label{eq:basic_minimization_problem}
	I(\varphi) = \int_\Omega W(\grad\varphi(x))\,\dx \;\to\;\min_{\varphi\in\adm}
	\,.
\end{equation}
Here, $\Omega\subset\R^n$ is a connected domain and $\adm\subset \W1p(\Omega;\R^n)$ is the set of admissible functions, which is usually given by specific boundary conditions and includes the \emph{determinant constraint} $\det\grad\varphi(x)>0$ for a.e.\ $x\in\Omega$, which ensures that there is no local self-interpenetration of the solid body.\footnote{For incompressible solids, the stronger constraint $\det\grad\varphi=1$ can be imposed as well.} In particular, the \emph{elastic energy potential} $W\col\GLpn\to\R$, which provides the model of the elastic material, is naturally defined on the set $\GLpn$ of matrices with positive determinant.

In order to apply the direct methods in the calculus of variations to ensure the existence of solutions to \eqref{eq:basic_minimization_problem}, the energy $W$ needs to satisfy certain generalized convexity properties. In particular, the weak lower semicontinuity of the energy functional $I$ requires $W$ to be \emph{quasiconvex} \cite{morrey1952quasi}: if $W$ is not quasiconvex, energy-infimizing sequences of deformations can exhibit successively finer \emph{microstructures}. In this case, the minimal (or infimal) elastic energy under given Dirichlet boundary conditions can be determined via the \emph{relaxation} or \emph{quasiconvex envelope} of $W$.

In the following, we will consider a specific energy potential -- the \emph{Biot-type energy} $\WBiot$ -- and discuss the role played by the determinant constraint for the relaxation. More specifically, we will demonstrate both analytically and numerically that enforcing the condition $\det F>0$ on the deformation gradient $F=\grad\varphi$ is indeed necessary in order to find the correct results for $\WBiot$, whereas the relaxed energy potential for the related function $\Wdist$ does not require the application of constraints.

\subsection{Isotropic energy potentials and the Seth-Hill strain family}
An elastic energy potential $W\col\GLp(n)\to\R$ or $W\colon\R^{n\times n}\to\R$ is often assumed to be \emph{objective} (or \emph{frame-indifferent}) as well as \emph{isotropic}, i.e.\ to satisfy
\begin{equation}
	W(Q_1F\,Q_2) = W(F)
	\qquad\text{for all }\;F\in\GLp(n)
	\quad\text{and all }\;\; Q_1,Q_2\in\SO(n)\,,
\end{equation}
where $\GLp(n)=\{X\in\R^{n\times n}\setvert\det X> 0\}$ denotes the group of $n\times n$-matrices with positive determinant $\SO(n)=\{X\in \Rnn \setvert Q^T Q=\id_n\,,\;\det Q=1\}$ is the special orthogonal group; here and throughout, $\id_n$ denotes the $n\times n$ identity tensor.
It is well known that any objective and isotropic function $F\mapsto W(F)$ on $\GLpn$ can be expressed in terms of the singular values\footnote{For any $F\in\Rnn$, the singular values of $F$ are the eigenvalues of $\sqrt{F^TF}$.} of the argument, i.e.
there exists a unique symmetric function $g\col\R_+^n\to \R$ such that $W(F)=g(\lambda_1,\dotsc,\lambda_n)$ for all $F\in\GLpn$
with singular values $\lambda_1,\dotsc,\lambda_n$.

If $f\colon[0,\infty)\to\R$ is injective, then any objective and isotropic $W\col\GLp(n)\to\R$ can also be expressed as
\begin{align}
	W(F)=\widetilde{g}(f(\lambda_1),f(\lambda_2),...,f(\lambda_n))
\end{align}
with a symmetric function $\widetilde{g}\col\R\to \R$. In particular, this representation is possible for functions $f=f_{(m)}$ of the form
\begin{align}
	f_{(m)}(x)=
	\begin{cases}
		\frac{1}{2\.m}(x^{2m}-1) &\casesif m\neq 0\,, \\
		\log x &\casesif m=0\,,
	\end{cases}
\end{align}
which correspond to the commonly used \cite{boehlkeBertram2002} \emph{strain tensors} of \emph{Seth-Hill type} \cite{seth1961generalized,hill1970},
\begin{align}
	E_{(m)}=
	\begin{cases}
		\frac{1}{2\.m}(U^{2\.m}-\id_n) &\casesif m\neq 0\,,\\
		\log U &\casesif m=0\,;
	\end{cases}
\end{align}
here, $\log U$ is the principal matrix logarithm of the stretch tensor $U=\sqrt{F^TF}$ and the identity tensor on $\R^{n \times n}$ is denoted by $\id_n$.
In general, a (material) strain tensor is commonly defined as a \quoteref{uniquely invertible isotropic second order tensor function} of the right Cauchy-Green deformation tensor $C=F^TF$ \cite[p.~268]{truesdell60}.\footnote{Additional properties such as monotonicity are sometimes required of strain tensors, see for example \cite[p.\ 230]{Hill68} or \cite[p.~118]{Ogden83}.} %
Due to the invertibility of strain tensor mappings, any energy function $W$ can also be written in the form
\begin{equation}
	W(F) = \widetilde{W}(E_{(m)}(F))
\end{equation}
for any strain tensor $E_{(m)}$.

In contrast to a strain tensor, a \emph{strain measure} is an arbitrary mapping $\omega\colon\GLp(n)\to\R$ such that $\omega(F)=0$ if and only if $F\in\SOn$. Examples of strain measures include the \emph{Frobenius norms} of the Seth-Hill strain tensors
\begin{equation}
	\omega_{(m)}(F) = \norm{E_{(m)}}=\sqrt{\sum_{i=1}^n f_{(m)}^2(\lambda_i)}\,.
\end{equation} 
From this perspective, a strain measure indicates how much a deformation gradient $F\in\GLp(n)$ or $F\in\R^{n\times n}$ differs from a pure rotation \cite{agn_neff2014grioli,agn_neff2015geometry}.

A typical starting point in nonlinear elasticity is the isotropic quadratic energy in the Green strain tensor $E_{(1)}=\frac{1}{2}(F^TF-\id_3)$, i.e.\ the St.~Venant--Kirchhoff type strain energy
\[
	\WSVK\colon\R^{2\times2}\to\R\,,\quad\;\;\WSVK(F)= W_{(1)}(F)=\norm{{F^T F}-\id}^2=\norm{C-\id}^2\,,
\]
which represents a straightforward extension of linear elasticity and allows for easy algebraic computation of the various stress-tensors, since the relation between the (second Piola-Kirchhoff) stress and the Green strain is linear. Furthermore, a nonlinear elastic material model can be defined analogously via the energy
\[
	W_{(m)}(F) = \omega_{(m)}^2(F)=\norm{E_{(m)}}^2
\]
for any strain tensor $E_{(m)}$, such as the lateral-contraction-free Hencky strain energy \cite{Hencky1929,agn_neff2014axiomatic}, which is given by $\WH(F)=W_{(0)}(F)=\norm{\log U}^2$. In the following, we will focus on the Biot-type energy
\[
	\WBiot\colon \R^{2\times 2}\to \R\,, \quad\;\; \WBiot(F)= W_{(1/2)}(F)=\norm{\sqrt{F^T F}-\id_2}^2=\norm{\sqrt{C}-\id_2}^2
	\,.
\]

While all energy functions $W_{(m)}$ can be used as a simple extension of linear elasticity, they generally lack a number of constitutive properties which are considered beneficial (or even necessary) for nonlinear elastic models suitable for describing large strains. In particular, these energies (including $\WBiot=W_{(1/2)}$) are not \emph{rank-one convex} \cite{boehlkeBertram2002,Silhavy05} (cf.~\cite{Raoult1986,Neff_Diss00,Bruhns01} for earlier results on $m=1$ and $m=0$) and therefore not quasiconvex \cite{morrey1952quasi}.

\subsection{Generalized convexity and relaxation}

It is important to note that for generalized notions of convexity (convexity, polyconvexity, quasiconvexity, rank-one convexity) -- and especially for the corresponding convex envelopes -- the domain of the function is of crucial importance. In particular, it needs to be carefully distinguished whether generalized convex envelopes are computed with respect to the entire matrix space $\Rnn$ or the group $\GLpn$ of matrices with positive determinant.
\begin{definition}
	\label{definition:convexities}
	Let $n\in\N$ and $p\in[1,\infty]$.
	\begin{itemize}
		\item[1)]
		A function $W\col\Rnn\to\R\cup\{+\infty\}$ is called
		\begin{itemize}
			\item[i)]
			\emph{rank-one convex} if
			for all $F_1,F_2\in\Rmn$ with $\rank(F_2-F_1)=1$,
			\[
			W((1-t)F_1+tF_2) \leq (1-t)\,W(F_1) + t\,W(F_2) \qquad\tforall t\in[0,1]\,;
			\]
			%
			%
			\item[ii)]
			\emph{\W1p-quasiconvex} \cite{Ball84b} if
			for every bounded open set $\Omega\subset\R^n$ with $\meas{\partial\Omega}=0$,
			\begin{equation}\label{eq:quasionvexityDefinition}
			\int_{\Omega} W(F+\grad\vartheta(x)) \,\dx \geq \meas{\Omega}\cdot W(F)
			\end{equation}
			for all $F\in\Rnn$ and all $\vartheta\in\W1p_0(\Omega;\R^n)$ for which the integral in \eqref{eq:quasionvexityDefinition} exists;
			\item[iii)]
			\emph{quasiconvex} if $W$ is \W1\infty-quasiconvex.
		\end{itemize}
		\item[2)]
		A function $W\col\GLpn\to\R$ is called rank-one convex [\W1p-quasi\-con\-vex/quasi\-convex] if the function
		\[
		\What\col\Rnn\to\R\cup\{+\infty\}\,,\quad
		\What(F)=
		\begin{cases}
		W(F) &\casesif F\in\GLpn\,,\\
		+\infty &\casesif F\notin\GLpn\,,
		\end{cases}
		\]
		is rank-one convex [\W1p-quasiconvex/quasiconvex].
		\item[3)]
		A function $W\col\GLpn\to\R$ is called convex if there exists a convex function $\What\col\Rnn\to\R$ such that $\What(F)=W(F)$ for all $F\in\GLpn$.
		\item[4)]
		The \emph{rank-one convex envelope} $RW$, the \emph{quasiconvex envelope} $QW$ and the \emph{convex envelope} $CW$ of a function $W\col\Rnn\to\R\cup\{+\infty\}$ are given by
		\begin{alignat*}{2}
			RW(F) &= \sup \{ w(F) \setvert w\col\Rnn\to\R\cup\{+\infty\} \,\text{ rank-one convex, }\;\;&& w\leq W \}\,,\\
			QW(F) &= \sup \{ w(F) \setvert w\col\Rnn\to\R\cup\{+\infty\} \,\text{ quasiconvex, }&& w\leq W \}\,,\\
			CW(F) &= \sup \{ w(F) \setvert w\col\Rnn\to\R\cup\{+\infty\} \,\text{ convex, }&& w\leq W \}\,,
		\end{alignat*}
		respectively.
	\end{itemize}
\end{definition}


By its original definition, quasiconvexity
ensures stability of homogeneous deformations against \emph{any} interior perturbation. Moreover, according to the \emph{Dacorogna formula} \cite{dacorogna1982quasiconvexity}, the quasiconvex envelope of a function $W\col\Rnn\to\R$ is given by its \emph{relaxation}, i.e.\ $QW(F)$ is the energy infimum over all deformations with homogeneous Dirichlet boundary conditions induced by $F$:
\begin{equation}\label{eq:dacorogna_formula}
	QW(F) = \frac{1}{\abs{\Omega'}}\,\inf_\vartheta \int_{\Omega'} W(F+\grad\vartheta(x)) \,\dx\,,
	\qquad
	\vartheta\in W_0^{1,\infty}(\Omega';\R^n)
	\,,
\end{equation}
where $\Omega'\subset\R^n$ is an arbitrary domain. In nonlinear elasticity, however, the restriction of $W$ to $\GLpn$ naturally leads to the constraint $\det(F+\grad\vartheta)>0$ a.e.\ in the relaxation formula \eqref{eq:dacorogna_formula}. In the following, we will investigate the difference between these two notions of relaxation more closely for the Biot energy $\WBiot$.

On the other hand, rank-one convexity, which for a $C^2$-function $W$, is equivalent to the \emph{Legendre-Hadamard (LH) ellipticity} condition
\[
	D^2_F W(F)(\xi\otimes\eta,\xi\otimes\eta)\geq0 \quad\text{ for all }\;\xi,\eta\in\mathbb{R}^n\setminus \{0\},\;\; F\in \GLpn
	\,,
\]
is often considered as a minimal material stability condition, since (in addition to real wave speeds) LH-ellipticity implies the stability of homogeneous states against \emph{infinitesimal} interior material perturbations.

The generalized convex envelopes of a given function are, in general, challenging to determine. In a landmark paper \cite{le1995quasiconvex}, Le Dret and Raoult explicitly computed the quasiconvex envelopes of St.~Venant-Kirchhoff-type energies on  $\R^{m\times n}$, including the relaxation
\begin{align}
	Q\WSVK(F)&\colonequals\sup \{Z\in \R^{2\times 2}\to \R \setvert Z \;\,\text{quasiconvex}\,,\; Z\leq \WSVK\}
\end{align}
of the classical St.~Venant-Kirchhoff energy on $\R^{2\times2}$. In the planar case, relaxation formulas have also been given for more general classes of nonlinear energies on $\GLp(2)$, including \emph{isochoric} (or \emph{conformally invariant}) energies \cite{agn_martin2015rank,agn_martin2019envelope}, and on the special linear group $\SL(2)$, i.e.\ incompressible energies \cite{agn_ghiba2018rank,agn_martin2019quasiconvex}.

%

In the following, we will provide the quasiconvex envelope of the Biot-type energy
\begin{align}
	\WBiot\colon \R^{2\times 2}\to \R, \qquad \WBiot(F)=\norm{\sqrt{F^TF}-\id_2}^2
\end{align}
both on $\R^{2\times 2}$ and on $\GLp(2)$. The relaxation results are summarized in Section~\ref{section:conclusion}.

\section{The Euclidean distance to \boldmath$\SO(2)$ and Biot-type energy \boldmath$\WBiot$}

As indicated above, the Biot strain measure $\WBiot(F)=\norm{\sqrt{F^TF}-\id_2}^2$ is directly related to the Euclidean distance of the deformation gradient $F$ to the special orthogonal group $\SO(2)$. More specifically,\footnote{Note that $\min\limits_{R\in\SO(2)}\norm{F-R}^2=\min\limits_{R\in\SO(2)}\norm{R^TF-\id_2}^2$ due to the unitary invariance of the Frobenius matrix norm.}
\begin{equation}\label{eq:biotDistRelation}
	\WBiot(F)=\norm{\sqrt{F^TF}-\id_2}^2
	= \dist^2(F,\SO(2))
	\colonequals \min_{R\in\SO(2)}\norm{F-R}^2
	\qquad\text{for all }\; F\in\GLp(2)
	\,.
\end{equation}
However, it is worth noting that eq.~\eqref{eq:biotDistRelation} is indeed only valid under the condition $F\in\GLp(2)$, which is generally assumed to hold for the deformation gradient $F$ in nonlinear elasticity.
For example, with $X=\diag(1,-1)\notin\GLp(2)$ and any $R\in\SO(2)$, we find
\begin{align*}
	\norm{X-R}^2
	&= (1-R_{11})^2 + (-1-R_{22})^2 + R_{12}^2 + R_{21}^2
	\\&= (1-\cos(\alpha))^2 + (1+\cos(\alpha))^2 + 2\.\sin^2(\alpha)
	= 2 + 2\.(\cos^2(\alpha)+\sin^2(\alpha))
	= 4
\end{align*}
for some $\alpha\in(-\pi,\pi]$, and thus
\[
	\dist^2(X,\SO(2)) = \min_{R\in\SOn}\norm{X-R}^2 = 4
	\,,
\]
whereas
\[
	\norm{\sqrt{X^TX}-\id_2}^2 = \norm{\id_2-\id_2}^2 = 0
	\,.
\]
For arbitrary $F\in\R^{2\times2}$, the two terms $\WBiot(F)=\norm{\sqrt{F^TF}-\id_2}^2$ and $\dist^2(F,\SOn)$ therefore need to be carefully distinguished. In the following, we will also use the notation
\[
	\Wdist(F) \colonequals \dist^2(F,\SO(2))
\]
for arbitrary $F\in\R^{2\times2}$.

\subsection[A direct elementary calculation of $\Wdist(F)=\distspace^2({F},\SO(2))$]{A direct elementary calculation of \boldmath$\Wdist(F)=\distspace^2({F},\SO(2))$}

Due to our focus on the planar case, we recall the elementary computation of the Euclidean distance $\distspace^2({F},\SO(2))$. Let $F\in\R^{2\times2}$. Since
\begin{equation}
\label{eq:distElementaryStart}
	\Wdist(F) = \distspace^2\left({F},\SO(2)\right)
	= \inf_{R\in \SO(2)}\norm{F-R}^2
	= \inf_{\alpha\in (-\pi, \pi]} \; \Bignorm{F - \matr{
		\cos \alpha & \sin \alpha \\
		-\sin \alpha & \cos \alpha \\
	}}^2
\end{equation}
and
\begin{align}
	\Bignorm{\matr{
		F_{11}-\cos \alpha & F_{12}-\sin \alpha \\
		F_{21}+\sin \alpha & F_{22}-\cos \alpha \\
	}}^2=(F_{11}-\cos\alpha)^2+(F_{12}-\sin \alpha)^2+(F_{21}+\sin \alpha)^2+(F_{22}-\cos \alpha)^2
	\,,
\end{align}
the derivative with respect to $\alpha$ yields the stationarity condition
\[
	(F_{11}+F_{22})\,\sin \alpha+(F_{21}-F_{12})\,\cos \alpha = 0
	\qquad\iff\qquad
	\iprod{ \matr{\sin\alpha\\\cos\alpha},\, \matr{F_{11}+F_{22}\\F_{21}-F_{12}} } = 0
	\,,
\]
which is satisfied if and only if
\[
	\matr{\sin\alpha\\\cos\alpha}
	= \pm \frac{1}{\sqrt{(F_{21}-F_{12})^2+(F_{11}+F_{22})^2}}\,\matr{-(F_{21}-F_{12})\\F_{11}+F_{22}}
	= \pm \frac{1}{\sqrt{\norm{F}^2+2\.\det F}}\, \matr{-(F_{21}-F_{12})\\F_{11}+F_{22}}
	\,.
\]
The minimum is easily seen to be realized by
\[
	\matr{\sin\alpha\\\cos\alpha} = \frac{1}{\sqrt{\norm{F}^2+2\.\det F}}\, \matr{-(F_{21}-F_{12})\\F_{11}+F_{22}}
\]
which, after application to \eqref{eq:distElementaryStart} and some elementary calculations, yields
\begin{equation}
	\Wdist(F) = \distspace^2\left({F},\SO(2)\right) = \inf_{R\in \SO(2)}\norm{ F-R}^2=\norm{F}^2-2\.\sqrt{\norm{F}^2+2\.\det F}+2
\end{equation}
for any $F\in\R^{2\times2}$. Since
\[
	\norm{F}^2 = \iprod{F,F} = \iprod{F^TF,\id_2} = \tr(C)\,, \qquad \det C=(\det F)^2\,,
\]
with $C=F^TF$, the squared distance to $\SO(2)$ can alternatively be expressed as
\begin{align}
	\Wdist(F) &=
	\begin{cases}
		\mathrlap{\tr(C)-2\.\sqrt{\tr(C)+2\.\sqrt{\det C}}+2}
		\hphantom{\lambda_1^2+\lambda_2^2-2\.\sqrt{\lambda_1^2+\lambda_2^2+2\.\lambda_1\lambda_2}+2}
		&\casesif \det F>0\,,\\
		\tr(C)-2\.\sqrt{\tr(C)-2\.\sqrt{\det C}}+2 &\casesif \det F\leq 0
	\end{cases}
	\\[.49em]
	&=\begin{cases}
		\mathrlap{I_1-2\.\sqrt{I_1+2\.\sqrt{I_2}}+2}
		\hphantom{\lambda_1^2+\lambda_2^2-2\.\sqrt{\lambda_1^2+\lambda_2^2+2\.\lambda_1\lambda_2}+2}
		&\casesif \det F>0\,,\\
		I_1-2\.\sqrt{I_1-2\.\sqrt{I_2}}+2 &\casesif \det F\leq 0
	\end{cases}
	\\[.49em]
	&=\begin{cases}
		\lambda_1^2+\lambda_2^2-2\.\sqrt{\lambda_1^2+\lambda_2^2+2\.\lambda_1\lambda_2}+2 &\casesif \det F>0\,,\\
		\lambda_1^2+\lambda_2^2-2\.\sqrt{\lambda_1^2+\lambda_2^2-2\.\lambda_1\lambda_2}+2 &\casesif \det F\leq 0\,,
	\end{cases}\notag
\end{align}
where $\lambda_1,\lambda_2$ denote the singular values of $F$ and $I_1=\lambda_1^2+\lambda_2^2$, $I_2=\lambda_1^2\.\lambda_2^2$ are the invariants of $C$. Note that in contrast, the Biot strain energy $\WBiot$ can easily be expressed in terms of the singular values $\lambda_1,\lambda_2$ of $F$ as
\[
	\WBiot(F) = \norm{\sqrt{F^TF}-\id_2}^2 = (\lambda_1-1)^2 + (\lambda_2-1)^2
\]
for any $F\in\R^{2\times2}$.

\subsection[Equality of $\Wdist$ and $\WBiot$ on $\GLp(2)$]{Equality of \boldmath$\Wdist$ and $\WBiot$ on \boldmath$\GLp(2)$}
The Euclidean distance to $\SO(2)$ and the Biot energy are closely related by their restriction to the group $\GLp(2)$ of invertible matrices with positive determinant. First, we observe that for any $F\in\R^{2\times2}$, the Cayley-Hamilton formula yields
\[
	\norm{U}^2-[\tr(U)]^2+2\.\det U=0\quad\implies\quad \tr(U)=\sqrt{\norm{U}^2+2\.\det U}
	\,,
\]
since $\tr(U)>0$ for $U=\sqrt{F^TF}$. Thus the Biot energy can be expressed as
\begin{align}
	\WBiot(F) = \norm{U-\id_2}^2 &= \norm{U}^2 - 2\.\tr(U) + \norm{\id_2}^2\nonumber\\
	&= \norm{U}^2-2\.\sqrt{\norm{U}^2+2\.\det U}+2\nonumber\\
	&= \tr(C)-2\.\sqrt{\tr(C)+2\.\sqrt{\det C}}+2
	=I_1-2\.\sqrt{I_1+2\.\sqrt{I_2}}+2\label{eq:biot_invariant_representation}
\end{align}
for all $F\in\R^{2\times2}$. In the important special case $F\in\GLp(2)$, we can use the simple observation that
\[
	\det U=\sqrt{\det C}=\det F
\]
for $\det F>0$, to further simplify the Biot energy:
\begin{align*}
	\WBiot(F) &= \norm{U}^2-2\.\sqrt{\norm{U}^2+2\.\det U}+2\\
	&=
	\norm{F}^2-2\.\sqrt{\norm{F}^2+2\.\det F}+2=\left(\sqrt{\norm{F}^2+2\.\det F}-1\right)^2+1-2\.\det F
	\,.
\end{align*}
%
\begin{figure}[h!]
	\centering
		\centering
		\begin{tikzpicture}
			\begin{axis}[
				graphaxis,ytick={1,2},
				width=.833\textwidth,
				height=.343\textwidth]
				\addplot[domain=-2.1:2.1,diststyle]{(abs(x)-1)^2} node[pos=.07,above right] {$\WBiot$};
				\addplot[domain=-.49:2.1,biotstyle]{(x-1)^2} node[pos=.07,below left] {$\Wdist$};
			\end{axis}
		\end{tikzpicture}
	\centering
	\caption{Comparison between the energies $\Wdist(x)=\distspace^2(x,\SO(1))=\distspace^2(x,{1})=\abs{x-1}^2$ (black) and $\WBiot(x)=\bigabs{\abs{x}-1}^2$ (red) in the one-dimensional case.}
	\label{distBiot}
\end{figure}%

\noindent In particular, we note that
\begin{align}\label{eq:equality_on_glp}
	\dist^2(F,\SO(2)) = \Wdist(F) &= \WBiot(F) \qquad\text{for all }\;F\in\GLp(2)\,,
\intertext{while in general,}
	\Wdist(F) &\geq \WBiot(F) \qquad\text{for all }\;F\in\R^{2\times2}
	\,,
\end{align}
as visualized in Fig.~\ref{distBiot}.

\section{The quasiconvex relaxation of Biot-type strain measures}
\label{section:analyticalResults}
As indicated above, when computing the quasiconvex envelopes of the energies $\WBiot$ and $\Wdist$, it is important to distinguish between the unconstrained case, i.e.\ the relaxation of the mappings
\[
	\WBiot,\Wdist\col\;\R^{2\times2}\to\R
\]
over the entire space $\R^{2\times2}$, and the determinant-constrained relaxation with respect to the domain $\GLp(2)$. Furthermore, since the terms $\norm{\sqrt{F^TF}-\id_2}^2$ and $\Wdist(F)=\dist^2(F,\SOn)$ are not equal in general for $F\notin\GLp(2)$, their respective quasiconvex envelopes need to be computed separately, even at $F\in\GLp(2)$, where the energies coincide.

\subsection[The quasiconvex envelope of $\distspace^2({F},\SO(2))$ on $\R^{2\times 2}$]{The quasiconvex envelope of \boldmath$\distspace^2({F},\SO(2))$ on \boldmath$\R^{2\times 2}$}
\label{section:envelopeDistSO2}
First, we will explicitly compute the relaxation of the mapping $F\mapsto\distspace^2({F},\SO(2))$ on $\R^{2\times 2}$. To this end, we recall the Brighi-Theorem, adapted to $n=2$.
\begin{theorem}[{\cite[Theorem 3.2., p. 310]{Brighi97}}]
\label{theorem:brighi}
	Let $q\col\R^{2\times 2} \to\R_+$ be a non-negative quadratic form in $F$ and let $W\col\R^{2\times 2}\to \R$ be given by
	\[
		W(F)=\varphi(q(F)).
	\]
	Then for any $F\in \R^{2\times 2}$,
	\begin{equation}\label{eq:brighi_envelopes}
		q(F)\leq \alpha \qquad \implies \qquad RW(F) = QW(F) = PW(F) = CW(F) = \mu^*
		\,,
	\end{equation}
	where
	\[
		\mu^*=\inf _{t\in \R_+}\varphi(t)=\varphi(\alpha).
	\]
\end{theorem}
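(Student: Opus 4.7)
The plan is to sandwich all four envelopes between the constant value $\mu^*$ from both sides, using the standard ordering $CW\leq PW\leq QW\leq RW\leq W$. Since $\varphi$ attains its infimum at $\alpha$, we have $W(F)=\varphi(q(F))\geq\varphi(\alpha)=\mu^*$ everywhere; the constant function $\mu^*$ is convex and lies below $W$, hence $CW\geq\mu^*$, and via the chain above so do $PW$, $QW$, $RW$ on all of $\R^{2\times 2}$. It then suffices to prove the matching upper bound $RW(F)\leq\mu^*$ on the sublevel set $\{q(F)\leq\alpha\}$, since this inequality propagates \emph{down} the chain to $QW$, $PW$, and $CW$ and forces all four envelopes to collapse to $\mu^*$ there.

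For the upper bound I would construct an explicit one-step rank-one laminate. On the level set $Z\colonequals\{F\in\R^{2\times 2}\setvert q(F)=\alpha\}$ the energy equals $W|_Z=\varphi(\alpha)=\mu^*$ by construction. Given $F$ with $q(F)\leq\alpha$, the goal is to find a rank-one direction $R=\xi\otimes\eta\neq 0$ and scalars $t_1<0<t_2$ with $F+t_1R,\,F+t_2R\in Z$. Polarising $q$ into a symmetric bilinear form $b$, the restriction
\[
	g(t) \colonequals q(F+tR) = q(F) + 2\.t\,b(F,R) + t^2\,q(R)
\]
is an ordinary quadratic in $t$, and the equation $g(t)=\alpha$ reduces to finding its roots. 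Provided $q(R)>0$, the discriminant is non-negative (because $q(F)-\alpha\leq 0$) and the product of the roots, $(q(F)-\alpha)/q(R)\leq 0$, forces them to straddle the origin. Writing $F=\lambda(F+t_1R)+(1-\lambda)(F+t_2R)$ with $\lambda=t_2/(t_2-t_1)\in(0,1)$ then exhibits $F$ as a convex combination along a rank-one segment joining two points of $Z$, and rank-one convexity of $RW$ gives $RW(F)\leq\lambda\,W(F+t_1R)+(1-\lambda)\,W(F+t_2R)=\mu^*$.

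The main obstacle I expect is guaranteeing the existence of a rank-one $R$ with $q(R)>0$; without such an $R$ the quadratic above degenerates and the one-step lamination breaks down. This amounts to excluding the case in which $q$ vanishes identically on the rank-one cone. In dimension two, the rank-one null quadratic forms on $\R^{2\times 2}$ are precisely the scalar multiples of $F\mapsto\det F$; since the determinant is sign-indefinite, the only non-negative quadratic form vanishing on all rank-one matrices is $q\equiv 0$ — a trivial case in which $W$ is itself constant and the conclusion is immediate. Once this observation is in place, the single-step laminate above works at every $F$ with $q(F)\leq\alpha$ (no iterated laminate hierarchy is required), and the four envelopes collapse to $\mu^*$ on the entire sublevel set.
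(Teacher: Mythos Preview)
Your proof is correct. Note, however, that the paper does not supply its own proof of this statement --- Theorem~\ref{theorem:brighi} is quoted verbatim from \cite{Brighi97} and used as a black box in the computation of $Q\Wdist$ --- so there is nothing in the paper to compare your argument against directly.

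That said, your one-step lamination is precisely the mechanism the paper \emph{does} employ later, in the proof of Proposition~\ref{proposition:constrained_envelope}: there a specific rank-one direction $H$ is chosen, the scalar function $t\mapsto\norm{F+tH}^2+2\det(F+tH)$ is observed to tend to $+\infty$ at both ends, and the intermediate value theorem produces $t_1<0<t_2$ landing on the level set. Your argument is the clean abstraction of that construction to a general non-negative quadratic form $q$, with the quadratic $g(t)=q(F+tR)$ playing the role of that scalar function. The one point requiring care --- the existence of a rank-one $R$ with $q(R)>0$ --- you handle correctly: the quadratic forms on $\R^{2\times2}$ vanishing on the entire rank-one cone are exactly the scalar multiples of $\det$, and non-negativity forces $q\equiv0$, a trivial case. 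No iterated lamination is needed, and the chain $CW\leq PW\leq QW\leq RW$ together with the constant lower bound $\mu^*$ closes the sandwich.
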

\noindent Using Theorem~\ref{theorem:brighi}, the relaxation of $\Wdist$ can be determined directly.
\begin{proposition}
	Let $\Wdist\col\R^{2\times2}\to\R,\,F\mapsto\Wdist(F)=\dist^2(F,\SO(2))$. Then the quasiconvex relaxation of $\dist^2$ over $\R^{2\times2}$ is given by
	\begin{equation}
		Q\Wdist(F) =
		\begin{cases}
			1-2\.\det F &\casesif \norm{F}^2+2\.\det F< 1\,,\\
			(\sqrt{\norm{F}^2+2\.\det F}-1)^2+1-2\.\det F &\casesif \norm{F}^2+2\.\det F\geq 1\,.
		\end{cases}
	 \end{equation}
\end{proposition}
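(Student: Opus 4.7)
The plan is to reduce the problem to the situation covered by Theorem~\ref{theorem:brighi} by peeling off the null-Lagrangian contribution of the determinant. From the elementary identity already obtained in Section~\ref{section:envelopeDistSO2}, I would rewrite
\[
	\Wdist(F) = \norm{F}^2 - 2\.\sqrt{\norm{F}^2 + 2\.\det F} + 2 = \varphi(q(F)) + \bigl(1 - 2\.\det F\bigr)\,,
\]
where $\varphi(t) = (\sqrt{t}-1)^2$ and
\[
	q(F) = \norm{F}^2 + 2\.\det F = (F_{11}+F_{22})^2 + (F_{12}-F_{21})^2 \geq 0
\]
is a non-negative quadratic form in $F$. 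Since $F \mapsto 1 - 2\.\det F$ is quasiaffine (the determinant being a null Lagrangian in two dimensions) and since the standard identity $Q(W+L) = QW + L$ holds for any quasiaffine $L$, the task reduces to computing $Q[\varphi \circ q]$.

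For the candidate envelope I would introduce the monotone convex envelope of $\varphi$ on $[0,\infty)$,
\[
	\psi(t) = \begin{cases} 0 & \casesif 0 \leq t \leq 1\,, \\ (\sqrt{t}-1)^2 & \casesif t \geq 1\,, \end{cases}
\]
and observe that $\psi$ is $C^1$, convex and non-decreasing: its derivative vanishes on $[0,1]$ and equals $1 - 1/\sqrt{t}$ on $[1,\infty)$, the two branches agreeing at $t = 1$. Because $q$ is polyconvex (the sum of the convex $\norm{F}^2$ and the polyaffine $2\.\det F$) and $\psi$ is convex and non-decreasing, the composition $F \mapsto \psi(q(F))$ is polyconvex and therefore quasiconvex. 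The pointwise inequality $\psi \leq \varphi$ on $[0,\infty)$ then yields the lower bound $\psi \circ q \leq Q[\varphi \circ q]$.

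For the matching upper bound I would apply Theorem~\ref{theorem:brighi} directly to $\varphi \circ q$: since $\varphi$ attains its infimum $\mu^* = 0$ at $\alpha = 1$, Brighi's theorem gives $Q[\varphi \circ q](F) = 0 = \psi(q(F))$ on the sublevel set $\{F : q(F) \leq 1\}$. On the complementary region $\{q > 1\}$, $\psi$ and $\varphi$ coincide, so the trivial inequality $Q[\varphi \circ q] \leq \varphi \circ q = \psi \circ q$ closes the gap. Combining both bounds yields $Q[\varphi \circ q] = \psi \circ q$, and restoring the quasiaffine term $1 - 2\.\det F$ produces precisely the two-case formula in the statement.

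The step I expect to be the main obstacle is the verification that $\psi \circ q$ is genuinely polyconvex (equivalently, quasiconvex) on all of $\R^{2\times 2}$: this hinges on the $C^1$-matching and the convexity of the one-dimensional function $\psi$ across the interface $t = 1$, which is exactly what allows the monotone-composition argument to go through. Once this is in place, the Brighi theorem on $\{q \leq 1\}$ and the trivial estimate on $\{q > 1\}$, together with the null-Lagrangian identity $Q(W + L) = QW + L$, deliver the result routinely.
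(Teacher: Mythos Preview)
Your proposal is correct and follows essentially the same route as the paper: peel off the null-Lagrangian $1-2\det F$, apply Brighi's theorem to $\varphi\circ q$ on $\{q\leq 1\}$, and verify that the candidate $\psi\circ q$ is itself (poly)convex so that it equals the envelope on the complementary region. The only cosmetic difference is that the paper observes $q(F)=(F_{11}+F_{22})^2+(F_{12}-F_{21})^2$ is already a \emph{convex} (non-negative quadratic) form, so the composition $\psi\circ q$ is outright convex on $\R^{2\times2}$ rather than merely polyconvex; your monotone-composition-with-polyconvex argument is also valid but slightly less sharp.
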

\begin{proof}
	Theorem \ref{theorem:brighi} is directly applicable to $q\colon\R^{2\times 2} \rightarrow\R_+$ with
	\[
		q(F)=\norm{F}^2+2\.\det F
		\,;
	\]
	note that $q$ is indeed a non-negative quadratic form. Consider the function $\varphi\col\R_+\rightarrow\R_+$ with $\varphi(t)=(\sqrt{t}-1)^2$ and let $W(F)=\varphi(q(F))$. Since
	\[
		\inf_{t\in \R_+}\varphi(t) = 0 = \varphi(1) \quad \implies\quad \mu^*=0\,,\;\;\alpha=1\,,
	\]
	the Brighi Theorem yields the implication
	\begin{equation}
	\label{eq:pointwiseEnvelope}
		q(F)\leq 1 \qquad \implies \qquad RW(F) = QW(F) = PW(F) = CW(F) = 0
		\,.
	\end{equation}
	Since $t\mapsto (\sqrt{t}-1)^2$ is convex on $\Rp$ as well as increasing for $t>1$ and $q$ is convex as a non-negative quadratic form, the composition $W(F)=\varphi(q(F))$ is convex for $F$ with $q(F)\geq 1$.  Therefore, the function $\Wtilde\col\R^{2\times2}\to\R$ with
	\[
		\Wtilde(F)
		= \begin{cases}
			\quad0 &\casesif q(F)\leq 1\,,\\
			(\sqrt{q(F)}-1)^2 &\casesif q(F)\geq 1
		\end{cases}
		=\ \begin{cases}
			\quad0 &\casesif \norm{F}^2+2\.\det F< 1\,,\\
			(\sqrt{\norm{F}^2+2\.\det F}-1)^2 &\casesif \norm{F}^2+2\.\det F\geq 1
		\end{cases}
	\]
	is continuous, everywhere convex, below $W$ for $q(F)<1$ where it coincides with the value of the quasiconvex envelope established in \eqref{eq:pointwiseEnvelope}, and equal to $W$ for $q(F)\geq1$, which implies that $\Wtilde=QW$ is the quasiconvex hull of $W$. Finally, using the representation
	\begin{align}
		\Wdist(F) = \distspace^2(F,\SO(2))=\left(\sqrt{\norm{F}^2+2\.\det F}-1\right)^2 + 1 - 2\.\det F = W(F) + 1 - 2\.\det F\,,
	\end{align}
	of $\Wdist$, it is easy to see that
	\begin{align*}
		Q\Wdist(F) &= QW(F) + 1 - 2\.\det F
		\\
		&=\begin{cases}
		1-2\.\det F &\casesif \norm{F}^2+2\.\det F< 1\,,\\
		(\sqrt{\norm{F}^2+2\.\det F}-1)^2+1-2\.\det F &\casesif \norm{F}^2+2\.\det F\geq 1\,,
		\end{cases}
	\end{align*}
	since $F\mapsto 1-2\.\det F$ is a Null-Lagrangian.
\end{proof}
Using a different proof, this result was initially obtained by {\v{S}}ilhav{\`y} \cite{silhavy2001rank}, establishing also that $Q\Wdist$ is indeed a polyconvex function; another proof was given by Dolzmann~\cite[Section 4]{dolzmann2012regularity} (cf.~\cite{agn_martin2015rank}).

\subsubsection{Behaviour for basic deformation modes}
In order to visualize the behaviour of $\Wdist$ and its relaxation, we consider a number of basic deformations and directly compare the corresponding energy function values. First, we note that for purely volumetric deformations of the form $F=\alpha\cdot\id_2$ with $\alpha\in\R$,
\begin{align}
	Q\Wdist(\alpha\cdot \id_2)
	&=\begin{cases}
		1-2\.\det \alpha\cdot \id_2 &\casesif \norm{\alpha\cdot \id_2}^2+2\.\det \alpha\cdot \id_2\leq 1\,, \\
		(\sqrt{\norm{\alpha\cdot \id_2}^2+2\.\det \alpha\cdot \id_2}-1)^2+1-2\.\det \alpha\cdot \id_2 &\casesif \norm{\alpha\cdot \id_2}^2+2\.\det \alpha\cdot \id_2\geq 1
	\end{cases}
	\nonumber\\[.49em]
	&=\begin{cases}
		1-2\.\alpha^2 &\casesif 2\alpha^2+2\.\alpha^2\leq 1\,, \\
		(\sqrt{2\alpha^2+2\.\alpha^2}-1)^2+1-2\.\alpha^2 &\casesif 2\alpha^2+2\.\alpha^2\geq 1
	\end{cases}
	\nonumber
	\\[.49em]
	&=\begin{cases}
		1-2\.\alpha^2 &\casesif \alpha\in \left[-\frac{1}{2},\frac{1}{2} \right]\,, \\
		(2\abs{\alpha}-1)^2+1-2\.\alpha^2 &\casesif \alpha\in\R\setminus \left[-\frac{1}{2},\frac{1}{2} \right]
	\end{cases}
	\nonumber
	\\[.49em]
	&=\begin{cases}
		1-2\.\alpha^2 &\casesif \alpha\in \left[-\frac{1}{2},\frac{1}{2} \right]\,, \\
		2\alpha^2-4\abs{\alpha}+2 &\casesif \alpha\in\R\setminus \left[-\frac{1}{2},\frac{1}{2} \right]\,.
	\end{cases}
\end{align}

\begin{figure}[h!]
	\centering
	\begin{minipage}{0.49\linewidth}
		\centering%
		\begin{tikzpicture}
			\begin{axis}[
				graphaxis,ytick={2,3},
				width=.98\textwidth,
				height=.637\textwidth]
				\addplot[domain=-2.17:-.5,Qdiststyle]{2*x^2-4*abs(x)+2};
				\addplot[domain=-.5:.5,Qdiststyle]{1-2*x^2};
				\addplot[domain=.5:2.17,Qdiststyle]{2*x^2-4*abs(x)+2};
				\addplot[domain=-2.17:2.17,diststyle]{2*(abs(x)-1)^2};
			\end{axis}
		\end{tikzpicture}%
	\end{minipage}%
	\hfill%
	\begin{minipage}{0.49\linewidth}
		\centering
		\begin{tikzpicture}
			\begin{axis}[
				graphaxis,
				width=.98\textwidth,
				height=.637\textwidth]
				\addplot[domain=-2.17:2.17,Qdiststyle]{(sqrt(4+x^2)-1)^2-1};
				\addplot[domain=-2.17:2.17,diststyle]{(sqrt(4+x^2)-1)^2-1};
			\end{axis}
		\end{tikzpicture}
	\end{minipage}%
	\caption{Left: Energy behaviour of $\Wdist$ and its relaxation for volumetric deformations, with the mapping $\alpha\mapsto \Wdist(\alpha\cdot \id_2) = \dist^2(\alpha\cdot\id_2,\SO(2))$ shown in red vs.\ the relaxed energy mapping $\alpha\mapsto Q\Wdist(\alpha\cdot \id_2)$ in blue. Right: For simple shear, the mapping $\gamma\mapsto \Wdist(\matr{1&\gamma\\0&1})$ is equal to the relaxed mapping $\gamma\mapsto Q\Wdist(\matr{1&\gamma\\0&1})$.}
	\label{Figss}
	\label{Qdist}
\end{figure}%

\noindent In particular,
\begin{equation}\label{eq:compression_energy_values}
	Q\Wdist(0.4\cdot \id_2) = 0.68\,, \qquad \text{whereas}\qquad \Wdist(0.4\cdot \id_2) = 0.72
	\,.
\end{equation}
The difference between the energy $\Wdist$ and its quasiconvex relaxation $Q\Wdist$ for volumetric deformations is shown in Fig.~\ref{Qdist}.

\medskip

\noindent For a simple shear deformation, on the other hand, we find
\[
	\Bignorm{\matr{1&\gamma\\0&1}}^2+2\.\det \matr{1&\gamma\\0&1}
	= 1+1+\gamma^2+2\cdot 1
	= 4+\gamma^2\geq 1
\]
and thus (cf.~Fig.~\ref{Figss})
\begin{align}\label{eq:simple_shear_relaxation_Rnn}
	Q\Wdist(\matr{1&\gamma\\0&1}) &= \Wdist(\matr{1&\gamma\\0&1})\\
	&= (\sqrt{\Bignorm{\matr{1&\gamma\\0&1}}^2+2\.\det \matr{1&\gamma\\0&1}} - 1)^2 + 1 - 2\.\det \matr{1&\gamma\\0&1} = (\sqrt{4+\gamma^2}-1)^2 - 1 \nonumber
\end{align}
for all $\gamma\in\R$.


\medskip

\noindent Finally, for the general case of diagonal matrices in $\R^{2\times2}$, we compute (cf.~Fig.~\ref{fig:energy_surfaces})
\begin{align*}
	Q\Wdist(\matr{\alpha&0\\0&\beta})
	&= \begin{cases}
		1-2\.\alpha\beta &\casesif \alpha^2+\beta^2+2\.\alpha\beta\leq 1\,,\\
		(\sqrt{\alpha^2+\beta^2+2\.\alpha\beta}-1)^2+1-2\.\alpha\beta &\casesif \alpha^2+\beta^2+2\.\alpha\beta\geq 1
	\end{cases}
	\\[.49em]
	&=\begin{cases}
		1-2\.\alpha\beta &\casesif \abs{\alpha+\beta}\leq 1\,,\\
		(\abs{\alpha+\beta}-1)^2+1-2\.\alpha\beta &\casesif \abs{\alpha+\beta}\geq 1\,.
	\end{cases}
\end{align*}
In particular, for $\beta=0$, we find (cf.~Fig.~\ref{b0})
\begin{align*}
	Q\Wdist(\diag(\alpha,0))
	&= \begin{cases}
		1 &\casesif \abs{\alpha}\leq 1\,,\\
		(\abs{\alpha}-1)^2+1 &\casesif \abs{\alpha}\geq 1\,,
	\end{cases}
\intertext{while for $\beta=1$,}
	Q\Wdist(\diag(\alpha,1))
	&= \begin{cases}
		1-2\alpha &\casesif \abs{\alpha+1}\leq 1\,, \\
		(\abs{\alpha+1}-1)^2+1-2\alpha &\casesif \abs{\alpha+1}\geq 1\,.
	\end{cases}
\end{align*}

\newcommand{\sepgraph}[3]{%
	\pgfmathsetmacro{\beta}{#1}%
	\pgfmathsetmacro{\a}{-1-\beta}%
	\pgfmathsetmacro{\b}{1-\beta}%
	\addplot[domain=#2:\a,Qdiststyle]{(abs(x+\beta)-1)^2+1-2*\beta*x};
	\addplot[domain=\a:\b,Qdiststyle]{1-2*\beta*x};
	\addplot[domain=\b:#3,Qdiststyle]{(abs(x+\beta)-1)^2+1-2*\beta*x};
	\addplot[domain=#2:#3,diststyle]{(abs(x+\beta)-1)^2+1-2*\beta*x};
}
\begin{figure}[h!]
	\centering
	\begin{minipage}{0.49\textwidth}%
		\centering
		\begin{tikzpicture}
			\begin{axis}[
				graphaxis,
				ytick={2}, ymin=-.245,ymax=2.401,
				width=.98\textwidth,
				height=.637\textwidth]
				\sepgraph{0}{-2}{2}
			\end{axis}
		\end{tikzpicture}
		\subcaption{$\beta=0$}
	\end{minipage}%
	\hfill%
	\begin{minipage}{0.49\textwidth}%
		\centering
		\begin{tikzpicture}
			\begin{axis}[
				graphaxis, ymin=-.245,
				width=.98\textwidth,
				height=.637\textwidth]
				\sepgraph{1}{-3}{3}
			\end{axis}
		\end{tikzpicture}
		\subcaption{$\beta=1$}
	\end{minipage}%
	\\[1.47em]%
	\begin{minipage}{0.49\textwidth}%
		\centering
		\begin{tikzpicture}
			\begin{axis}[
				graphaxis, ymin=-.245,
				width=.98\textwidth,
				height=.637\textwidth]
				\sepgraph{.25}{-3}{3}
			\end{axis}
		\end{tikzpicture}
		\subcaption{$\beta=\frac14$}
	\end{minipage}%
	\hfill%
	\begin{minipage}{0.49\textwidth}%
		\centering
		\begin{tikzpicture}
			\begin{axis}[
				graphaxis, ymin=-.245,
				width=.98\textwidth,
				height=.637\textwidth]
				\sepgraph{-.5}{-3}{3}
			\end{axis}
		\end{tikzpicture}
		\subcaption{$\beta=-\frac12$}
	\end{minipage}%
	\caption{The mappings $\alpha\mapsto \Wdist(\diag(\alpha,\beta))$ and $\alpha\mapsto Q\Wdist(\diag(\alpha,\beta))$ for fixed $\beta\in\R$ are shown in red and blue, respectively.}
	\label{b0}
\end{figure}
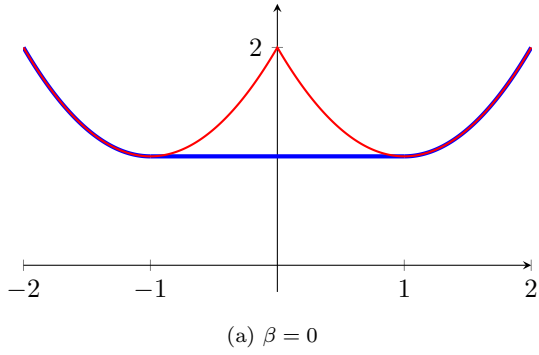
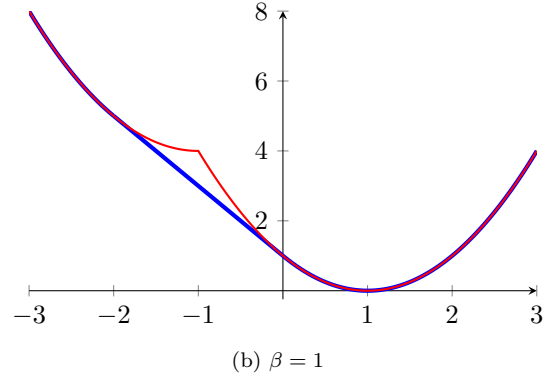
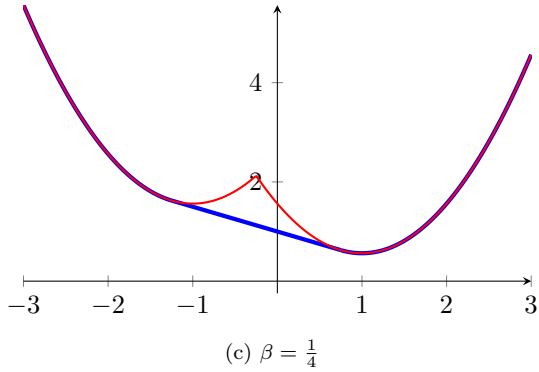
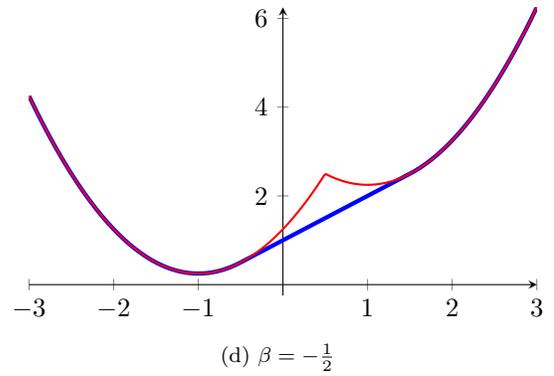%

%
%
%

\clearpage

\subsection[The quasiconvex envelope of $\WBiot$ on $\R^{2\times 2}$]{The quasiconvex envelope of \boldmath$\WBiot$ on \boldmath$\R^{2\times 2}$}

In order to compute the quasiconvex relaxation of the energy function $\WBiot\col\R^{2\times 2}\to\R$ given by $\WBiot(F)=\norm{\sqrt{F^TF}-\id_2}^2$, we will first consider a more general result on energy functions of \emph{Valanis-Landel form} \cite{valanis1967}.\footnote{The so-called \emph{primary matrix functions} on the set $\Symn$ of symmetric $n\times n$-matrices are exactly the gradients of Valanis-Landel-type energies \cite{agn_martin2015some}.} Throughout this section, we will also consider the general case of arbitrary dimension $n\in\N$.

\begin{proposition}%
\label{lemma:valanislandel}
	Let $W\col\Rnn\to\R$ be an objective, isotropic function such that $W$ is of \emph{Valanis-Landel form}, i.e.
	\begin{equation}\label{eq:valanislandel}
		W(F) = \sum_{i=k}^n h(\lambda_k)
	\end{equation}
	for all $F\in\Rnn$ with singular values $\lambda_1,\dotsc,\lambda_n\geq0$, where $h\col[0,\infty)\to\R$ is a continuous function bounded below%
	.
	Then
	\begin{equation}\label{eq:valanis_landel_envelopes}
		RW(F) = QW(F)= PW(F) = CW(F)
		= \sum_{k=1}^n C\htilde(\lambda_k)
	\end{equation}
	for all $F\in\Rnn$ with singular values $\lambda_1,\dotsc,\lambda_n$, where $C\htilde$ denotes the \emph{convex envelope} of
	\begin{equation}
		\label{eq:evencontinuation}
		\htilde\col\R\to\R\,,\quad
		\htilde(t) = \begin{cases}
			h(t) &: t\geq0\,,\\
			h(-t) &: t<0\,.
		\end{cases}
	\end{equation}
\end{proposition}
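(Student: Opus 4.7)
The plan is to sandwich the candidate envelope $\Wtilde(F)\colonequals\sum_{k=1}^n C\htilde(\lambda_k(F))$ between $CW$ and $RW$ and invoke the universal chain $CW\leq PW\leq QW\leq RW$. Since $h$ is continuous and bounded below, the same holds for its even extension $\htilde$, so $C\htilde\col\R\to\R$ is a finite, continuous, even, convex function; by evenness and convexity it attains its minimum at $0$ and is non-decreasing on $[0,\infty)$. Because $\lambda_k\geq 0$, $W(F)=\sum h(\lambda_k)=\sum\htilde(\lambda_k)$, so the pointwise inequality $C\htilde\leq\htilde$ gives $\Wtilde\leq W$.

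The first step is to prove that $\Wtilde$ is convex on $\Rnn$. The auxiliary function $\phi\col\R^n\to\R$, $\phi(s_1,\dotsc,s_n)=\sum_k C\htilde(s_k)$, is a sum of convex functions and hence jointly convex; it is symmetric under permutations and absolutely symmetric (invariant under $s_k\mapsto -s_k$) since $C\htilde$ is even. The classical spectral-function theorem of von Neumann and Lewis then implies that $F\mapsto\phi(\lambda_1(F),\dotsc,\lambda_n(F))=\Wtilde(F)$ is convex on $\Rnn$. Combined with $\Wtilde\leq W$, this yields $\Wtilde\leq CW$.

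The second step is the reverse bound $RW\leq\Wtilde$. Objectivity and isotropy of $W$ are inherited by $RW$: for $Q_1,Q_2\in\SO(n)$ the function $F\mapsto RW(Q_1FQ_2)$ is rank-one convex and bounded above by $W$, hence by $RW$; swapping roles gives equality. A modified singular value decomposition with rotations in $\SO(n)$ writes every $F\in\Rnn$ as $F=U\Sigma V^T$ with $U,V\in\SO(n)$ and $\Sigma=\diag(\lambda_1,\dotsc,\lambda_{n-1},\epsilon\lambda_n)$ for $\epsilon\in\{\pm 1\}$ chosen so that $\det\Sigma=\det F$; consequently $RW(F)=RW(\Sigma)$, and it suffices to bound $RW$ on diagonal matrices. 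For $D(a)\colonequals\diag(a_1,\dotsc,a_n)$, the perturbation $t\mapsto D(a)+t\,e_i\otimes e_i$ has rank one, so rank-one convexity of $RW$ makes $a_i\mapsto RW(D(a))$ convex separately in each coordinate. Since $RW(D(a))\leq W(D(a))=\sum_k\htilde(a_k)$, fixing $a_2,\dotsc,a_n$ and subtracting the constant $\sum_{k\geq 2}\htilde(a_k)$ leaves a convex function of $a_1$ bounded above by $\htilde(a_1)$, and therefore bounded by $C\htilde(a_1)$ by definition of the convex envelope. Iterating this argument in $a_2,\dotsc,a_n$ gives
\[
	RW(D(a))\leq \sum_{k=1}^n C\htilde(a_k)\qquad\text{for all }a\in\R^n,
\]
and applying this to $a=(\lambda_1,\dotsc,\lambda_{n-1},\epsilon\lambda_n)$, with $C\htilde$ even, yields $RW(F)=RW(\Sigma)\leq\Wtilde(F)$.

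Combining both estimates with the chain $CW\leq PW\leq QW\leq RW$ produces $\Wtilde\leq CW\leq PW\leq QW\leq RW\leq \Wtilde$, so all four envelopes coincide with $\Wtilde$. The main obstacle I anticipate is the convexity of $\Wtilde$ on $\Rnn$: coordinatewise convexity of $C\htilde$ in each singular value is insufficient, and one must invoke a genuine spectral-function theorem to handle arbitrary, non-diagonal variations of $F$. The coordinatewise iteration on the diagonal subspace is by contrast elementary, but relies essentially on the separable Valanis-Landel structure of $W$ along diagonal matrices.
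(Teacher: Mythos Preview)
Your proof is correct and follows the same overall sandwich strategy as the paper: show $\Wtilde$ is convex and $\leq W$ (hence $\Wtilde\leq CW$), then show $RW\leq\Wtilde$, and conclude via $CW\leq PW\leq QW\leq RW$. The two implementations differ in minor but noteworthy ways. For the convexity of $\Wtilde$ on $\Rnn$, the paper invokes Ball's theorem (a symmetric function of singular values that is convex and non-decreasing in each argument gives a convex function of $F$), whereas you invoke the von Neumann--Lewis spectral-function theorem via absolute symmetry of $\phi$; both are standard and equivalent here. For the upper bound $RW\leq\Wtilde$, the paper works with the lamination sequence $W_k$ (Dacorogna's iterative characterization of $RW$) and proves by induction on $K$ that $W_K(\diag(\lambda))\leq\sum_{k\leq K}C\htilde(\lambda_k)+\sum_{k>K}\htilde(\lambda_k)$; you instead observe that $RW$ itself inherits objectivity and isotropy, reduce to diagonals via the signed SVD, and iterate coordinatewise using separate convexity of $RW$ along $e_i\otimes e_i$. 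Your version is slightly more direct since it avoids the auxiliary sequence $W_k$, but the underlying mechanism---replacing $\htilde(\lambda_k)$ by $C\htilde(\lambda_k)$ one coordinate at a time along rank-one diagonal perturbations---is identical.
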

\begin{proof}
	The rank-one convex envelope $RW$ of $W$ is given by \cite[Theorem 6.10]{Dacorogna08}
	\[
		RW(F)
		= \lim_{k\to\infty}W_k(F)
		\,,
	\]
	where $W_0=W$ and
	\[
		W_{k+1}(F) = \inf \Big\{ t\.W_k(F_1)+(1-t)\.W_k(F_2) \,\setvert\, t\in[0,1]\,,\; t\.F_1+(1-t)\.F_2=F\,,\; \rank(F_2-F_1)=1 \Big\}
	\]
	for all $F\in\Rmn$ and $k\geq1$. By induction, we will first show that for any $W\col\Rnn\to\R$ of the form \eqref{eq:valanislandel} and any $K\leq n$, the estimate
	\begin{equation}\label{eq:valanislandelinduction}
		W_K(\diag(\lambda)) \leq \sum_{k=1}^K C\htilde(\lambda_k) + \sum_{k=K+1}^n \htilde(\lambda_k)
		\qquad\text{for all }\;\lambda=(\lambda_1,\dotsc,\lambda_n)\in\R^n
	\end{equation}
	holds (with possibly empty sums for $K\in\{0,n\}$), where $\htilde$ is given by eq.~\eqref{eq:evencontinuation}.
	
	Inequality \eqref{eq:valanislandelinduction} obviously holds\footnote{Note that $W(\diag(\lambda_1,\dotsc,\lambda_n))=\sum_{k=1}^n h(\abs{\lambda_k})=\sum_{k=1}^n \htilde(\lambda_k)$ for all $\lambda\in\R^n$.} for $K=0$, so assume that \eqref{eq:valanislandelinduction} holds for $K\in\{1,\dotsc,n-1\}$. For $r\in\R$, let $\lambda_i^r$ denote the vector obtained from replacing the $i$-th component of $\lambda$ with $r$, i.e.
	\[
		(\lambda_i^r)_k = \begin{cases}
			r &\casesif k=i\,,\\
			\lambda_k &\casesif k\neq i\,.
		\end{cases}
	\]
	Then for any $a,b\in\R$ and $t\in[0,1]$ with $a\neq b$ and $ta+(1-t)b=\lambda_{K+1}$,
	\begin{itemize}
		\item[i)] $t\.\diag(\lambda_{K+1}^a) + (1-t)\.\diag(\lambda_{K+1}^b) = \diag(\lambda)$,
		\item[ii)] $\rank\bigl(\diag(\lambda_{K+1}^b) - \diag(\lambda_{K+1}^a)\bigr) = 1$
	\end{itemize}
	and therefore
	\begin{align*}
		W_{K+1}(\diag(\lambda))
		&\leq t\.W_K(\diag(\lambda_{K+1}^a))+(1-t)\.W_k(\diag(\lambda_{K+1}^b))
		\\&\leq t\.\Bigl( \sum_{k=1}^K C\htilde((\lambda_{K+1}^a)_k) + \sum_{k=K+1}^n \htilde((\lambda_{K+1}^a)_k) \Bigr) + (1-t)\.\Bigl( \sum_{k=1}^K C\htilde((\lambda_{K+1}^b)_k) + \sum_{k=K+1}^n \htilde((\lambda_{K+1}^b)_k) \Bigr)
		\\&= \sum_{k=1}^K C\htilde(\lambda_k) + t\.\htilde(a) + (1-t)\.\htilde(b) + \sum_{K=K+2}^n \htilde(\lambda_k)
	\end{align*}
	by the induction assumption. In particular,
	\begin{align}
		W_{K+1}(\diag(\lambda))
		&\leq \sum_{k=1}^K C\htilde(\lambda_k) + \sum_{K=K+2}^n \htilde(\lambda_k)
			+ \smash{\underbrace{
				\inf_{\substack{a,b\in\R,\,t\in[0,1]\\ta+(1-t)b=\lambda_{K+1}}} t\.\htilde(a) + (1-t)\.\htilde(b)
			}_{=\.C\htilde(\lambda_{K+1})}}
		\\\nonumber&= \sum_{k=1}^{K+1} C\htilde(\lambda_k) + \sum_{K=K+2}^n \htilde(\lambda_k)
		\,.
	\end{align}
	Due to the assumed isotropy and objectivity of $W$, \eqref{eq:valanislandelinduction} establishes that
	\[
		RW(F) \leq \sum_{k=1}^n C\htilde(\lambda_k) \equalscolon \Wtilde(F)
	\]
	for all $F\in\Rnn$ with singular values $\lambda_1,\dotsc,\lambda_n\geq0$. However, since $C\htilde$ is monotone increasing on $[0,\infty)$, the mapping
	\[
		(\lambda_1,\dotsc,\lambda_n) \mapsto \sum_{k=1}^n C\htilde(\lambda_k)
	\]
	is convex and increasing in each component, which implies that $\Wtilde$ is convex \cite[Theorem 5.1]{ball1976convexity}. Therefore, since $\Wtilde\leq W$,
	\[
		RW(F) \leq \Wtilde(F) \leq CW(F) \leq PW(F) \leq QW(F) \leq RW(F)
	\]
	and thus
	\[
		RW(F) = QW(F)= PW(F) = CW(F) = \Wtilde(F) = \sum_{k=1}^n C\htilde(\lambda_k)
	\]
	for all $F\in\Rnn$ with singular values $\lambda_1,\dotsc,\lambda_n$.
\end{proof}
\begin{corollary}
	A function $W\col\Rnn\to\R$ of the form \eqref{eq:valanislandel} is (rank-one/quasi-/poly-)convex if and only if $h\col[0,\infty)$ is convex and monotone increasing.
\end{corollary}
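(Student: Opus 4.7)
The plan is to deduce this corollary directly from Proposition \ref{lemma:valanislandel}, which has already identified $RW = QW = PW = CW$ with the single mapping $F \mapsto \sum_{k=1}^n C\htilde(\lambda_k)$, where $\htilde$ denotes the even extension of $h$ from \eqref{eq:evencontinuation}. In particular, any of the four convexity conditions is equivalent to the assertion that $W$ itself coincides with this common envelope, so the question reduces to when $h(\lambda) = C\htilde(\lambda)$ for all $\lambda \geq 0$.

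For the sufficient direction, I would first observe that if $h$ is convex and monotone increasing on $[0,\infty)$, then $\htilde(t) = h(\abs{t})$ is convex on all of $\R$, being the composition of the convex function $t \mapsto \abs{t}$ with a convex, non-decreasing function. Hence $C\htilde = \htilde$, and Proposition \ref{lemma:valanislandel} yields
\[
CW(F) \;=\; \sum_{k=1}^n C\htilde(\lambda_k) \;=\; \sum_{k=1}^n h(\lambda_k) \;=\; W(F),
\]
so $W = CW$. Since convexity is the strongest of the four notions, rank-one convexity, quasiconvexity and polyconvexity follow at once.

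For the necessary direction, I would start from rank-one convexity alone (the weakest of the four), which gives $W = RW$. Specializing the identity $RW(F) = \sum_k C\htilde(\lambda_k)$ to $F = \lambda \cdot \id_n$ with $\lambda \geq 0$ collapses each side into $n$ identical terms and produces $h(\lambda) = C\htilde(\lambda)$ for every $\lambda \geq 0$. The function $C\htilde$ is convex by construction, and even (since $\htilde$ is even, the function $t \mapsto C\htilde(-t)$ is also a convex minorant of $\htilde$ and hence must equal $C\htilde$). Its restriction $h$ to $[0,\infty)$ is therefore convex, and a convex even function on $\R$ is automatically non-decreasing on $[0,\infty)$: writing $x = \tfrac{y+x}{2y}\,y + \tfrac{y-x}{2y}\,(-y)$ for $0 \leq x \leq y$ and invoking convexity together with evenness yields $h(x) \leq h(y)$.

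I do not anticipate any genuine obstacle here — the argument is essentially bookkeeping on top of Proposition \ref{lemma:valanislandel}. The two small points that need to be handled carefully are the evenness of the convex envelope of an even function (used in the necessary direction) and the fact that the composition $h \circ \abs{\cdot}$ is convex precisely when $h$ is convex and non-decreasing (used in the sufficient direction); both are standard.
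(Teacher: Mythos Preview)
Your proposal is correct and follows essentially the same route as the paper: both reduce the question, via Proposition~\ref{lemma:valanislandel}, to the one-variable equivalence $h = C\htilde$ on $[0,\infty)$ and then identify this with $h$ being convex and non-decreasing. The paper compresses this into a single sentence by passing through the intermediate characterization ``$\htilde$ is convex on $\R$'', whereas you spell out the evenness of $C\htilde$ and the monotonicity argument explicitly; the substance is the same.
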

\begin{proof}
	It is sufficient to observe that the (generalized) convex envelope of $W$ is equal to $W$ if and only if $h=C\htilde$ on $[0,\infty)$, which is the case if and only if $\htilde$ is convex on $\R$ or, equivalently, if $h$ is monotone increasing and convex on $[0,\infty)$.
\end{proof}
Since $\WBiot$ is indeed a function in Valanis-Landel form, i.e.\ a mapping of the form \eqref{eq:valanislandel}, the quasiconvex envelope of $\WBiot$ on $\R^{n\times n}$ can be computed directly from Proposition~\ref{lemma:valanislandel}.
\begin{proposition}
\label{prop:biotRnnEnvelope}
	The quasiconvex envelope of
	\begin{align*}
		&\WBiot\colon\Rnn\to\R\,,\qquad
		\WBiot(F)=\norm{\sqrt{F^TF}-\id_n}^2
	\intertext{on $\Rnn$ is given by}
		Q&\WBiot(F) = \sum_{k=1}^n[\lambda_k-1]^2_+
		\,,\qquad\qquad 
		[x]_+\colonequals
		\begin{cases}
			x &\casesif x\geq 0\,,\\
			0 &\casesif x<0\,,
		\end{cases}
	\end{align*}
	for all $F\in\Rnn$ with singular values $\lambda_1,\dotsc,\lambda_n\geq0$.
\end{proposition}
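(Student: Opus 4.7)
The plan is to reduce the problem to a one-dimensional convex envelope calculation via Proposition~\ref{lemma:valanislandel}. Since $\WBiot(F) = \sum_{k=1}^n (\lambda_k - 1)^2$ in terms of the singular values of $F$, the energy is manifestly of Valanis-Landel form as in \eqref{eq:valanislandel} with $h\colon[0,\infty)\to\R$, $h(\lambda) = (\lambda-1)^2$, which is continuous and non-negative. Proposition~\ref{lemma:valanislandel} then immediately yields
\[
    Q\WBiot(F) = \sum_{k=1}^n C\htilde(\lambda_k),
\]
where $\htilde(t) = (|t|-1)^2$ is the even continuation of $h$ to $\R$. Everything thus reduces to computing this one-dimensional convex envelope.

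The function $\htilde$ attains its global minimum $0$ at $t=\pm 1$ and a local maximum $\htilde(0)=1$; it is convex on each of $(-\infty,-1]$ and $[1,\infty)$ but not on all of $\R$. The natural candidate for $C\htilde$ is the piecewise function $t\mapsto [|t|-1]_+^2$, which equals $\htilde$ outside $(-1,1)$ and vanishes on $[-1,1]$. I would first verify that this candidate is globally convex by observing that its one-sided derivatives both equal zero at the junctions $t=\pm 1$ and that it is convex on each of the three pieces, so its derivative is monotone non-decreasing on $\R$. To confirm that it actually coincides with $C\htilde$, I would note that the constant $0$ is a convex minorant of $\htilde$, so $C\htilde \geq 0$; combined with $C\htilde(\pm 1)\leq\htilde(\pm 1)=0$, convexity forces $C\htilde\equiv 0$ on $[-1,1]$. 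On $\{|t|\geq 1\}$, $\htilde$ itself is convex, so the candidate — being sandwiched between $\htilde$ and $C\htilde$ — must agree with both.

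Finally, since singular values satisfy $\lambda_k \geq 0$, one has $[|\lambda_k|-1]_+^2 = [\lambda_k-1]_+^2$, which yields the claimed formula. No substantial obstacle is anticipated: the only non-routine point is the identification of the convex envelope of $\htilde$, and this is a transparent exercise in elementary one-dimensional convex analysis once the Valanis-Landel reduction has been applied.
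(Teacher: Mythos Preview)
Your proposal is correct and follows essentially the same approach as the paper: both apply Proposition~\ref{lemma:valanislandel} to reduce to computing the one-dimensional convex envelope of $\htilde(t)=(\lvert t\rvert-1)^2$, and then identify $C\htilde(t)=[\lvert t\rvert-1]_+^2$. Your justification of the envelope is in fact more detailed than the paper's, which simply states the result.
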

\begin{proof}
	Since $\WBiot$ is of the form \eqref{eq:valanislandel} with $h(t)=(t-1)^2$, it suffices to observe that for $\htilde(t)=(\abs{t}-1)^2$,
	\[
		C\htilde(t) = \begin{cases}
			0 &\casesif \abs{t}<1\,,\\
			(t-1)^2 &\casesif t\geq1\,,
		\end{cases}
	\]
	and thus $C\htilde(t)=[t-1]_+^2$.
\end{proof}
\noindent An alternative proof of Proposition~\ref{prop:biotRnnEnvelope} is shown in Appendix~\ref{appendix:alternativeProof}.
\begin{figure}[h!]
	\centering
	\begin{tikzpicture}
		\begin{axis}[
			graphaxis,ytick={1,2},
			ymin=-.07,
			width=.735\textwidth,
			height=.343\textwidth]
			\addplot[domain=-2.17:0,plotstyle,color=B]{(-x-1)^2};
			\addplot[domain=0:2.17,plotstyle,color=R]{(x-1)^2};
			%
			\addplot[domain=-2.17:-1,plotstyle,dashed,ultra thick,black]{(-x-1)^2};
			\addplot[domain=-1:1,plotstyle,dashed,ultra thick,black]{0};
			\addplot[domain=1:2.17,plotstyle,dashed,ultra thick,black]{(x-1)^2};
		\end{axis}
	\end{tikzpicture}%
	\caption{\label{fig:envelopes_valanis_landel}The function $h$ on $[0,\infty)$ (red), its even extension $\htilde$ to $\R$ (blue/red) and the convex envelope $C\htilde$ (black dashed line) for the energy $\Wdist$ on $\Rnn$.}%
\end{figure}%

\subsection[The quasiconvex envelope of the Biot strain measure on $\GLp(2)$]{The quasiconvex envelope of the Biot strain measure on \boldmath$\GLp(2)$}
\label{section:envelopesGlp}

In Section~\ref{section:envelopeDistSO2}, the mapping $F\mapsto \dist^2(F,\SO(2))$ was considered as a function on the entire matrix space $\R^{2\times2}$. In nonlinear elasticity, however, where the assumption $\det F>0$ is required to avoid local self-intersection, it is more natural to consider
the restriction $W\col\GLp(2)\to\R$ of an energy to the group $\GLp(2)$ of matrices with positive determinant or its extension
\begin{equation}\label{eq:inftyExtension}
	\Wtilde\col\R^{2\times2}\to\R\cup\{+\infty\}\,,\qquad
	W(F) =
	\begin{cases}
		W(F) &\casesif F\in\GLp(2)\,,\\
		+\infty &\casesotherwise\,.
	\end{cases}
\end{equation}
In this case, the more appropriate relaxation \cite{pedregal2012parametrized} of the function is given by\footnote{Throughout the following, we will denote by $Q_{\R^{2\times2}}W$ the classical quasiconvex envelope of any function $W\col\R^{2\times2}\to\R$, formerly written as simply $QW$, in order to distinguish it from the constrained envelope $Q_{\GLp(2)}W$.}
\[
	Q_{\GLp(2)}W
	\colonequals
	\sup \{Z\colon\GLp(2)\to \R \setvert Z \;\,\text{quasiconvex}\,,\; Z\leq W\}
	\,.
\]
Since $\WBiot(F)=\Wdist(F)$ for any $F\in\GLp(2)$ according to eq.~\eqref{eq:equality_on_glp}, the quasiconvex envelopes $Q_{\GLp(2)}\WBiot$ and $Q_{\GLp(2)}\Wdist$ must be equal as well. For simplicity, $W$ will therefore denote the restriction of $\WBiot$ or, equivalently, $\Wdist$ to $\GLp(2)$ in the following.
\begin{proposition}
\label{proposition:constrained_envelope}
	Let
	\[
		W\col\GLp(2)\to\R\,,\qquad
		W(F) = \WBiot(F) = \Wdist(F) = \dist^2(F,\SO(2)) = \norm{\sqrt{F^TF}-\id_2}^2
		\,.
	\]
	Then the quasiconvex relaxation $Q_{\GLp(2)}W\col\GLp(n)\to\R$ is given by
	\[
		Q_{\GLp(2)}W(F) = Q_{\R^{2\times2}}\Wdist(F) =
		\begin{cases}
			1-2\.\det F &\casesif \norm{F}^2+2\.\det F< 1\,,\\
			(\sqrt{\norm{F}^2+2\.\det F}-1)^2+1-2\.\det F &\casesif \norm{F}^2+2\.\det F\geq 1\,.
		\end{cases}
	\]
\end{proposition}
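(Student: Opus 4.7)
The plan is to prove the inequalities $Q_{\GLp(2)}W\leq Q_{\R^{2\times2}}\Wdist$ and $Q_{\GLp(2)}W\geq Q_{\R^{2\times2}}\Wdist$ separately on $\GLp(2)$, relying on the constrained Dacorogna representation
\[
Q_{\GLp(2)}W(F) \;=\; \inf \left\{\, \tfrac{1}{\abs{\Omega}}\int_\Omega W(F+\grad\vartheta)\,\dx \;\Big|\; \vartheta\in\W1\infty_0(\Omega;\R^2),\;\; F+\grad\vartheta\in\GLp(2)\text{ a.e.}\,\right\}
\]
obtained by applying formula \eqref{eq:dacorogna_formula} to the $+\infty$-extension of $W$ off $\GLp(2)$, together with the identification $W=\Wdist$ on $\GLp(2)$ from \eqref{eq:equality_on_glp}. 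The lower bound is then immediate: the admissible set of this constrained infimum is a subset of the one defining $Q_{\R^{2\times2}}\Wdist(F)$, and the integrands coincide on $\GLp(2)$, so the constrained infimum is at least as large.

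For the upper bound, the case $\norm{F}^2+2\det F\geq 1$ is trivial, since there $Q_{\R^{2\times2}}\Wdist(F)=\Wdist(F)=W(F)$ and the choice $\vartheta=0$ gives $Q_{\GLp(2)}W(F)\leq W(F)$. In the remaining case $\norm{F}^2+2\det F<1$, which forces $\det F>0$, we have $Q_{\R^{2\times2}}\Wdist(F)=1-2\det F$, and by isotropy and objectivity we may assume $F=\diag(\lambda_1,\lambda_2)$ with $\lambda_1,\lambda_2>0$ and $\lambda_1+\lambda_2<1$. Setting $s=\sqrt{1-(\lambda_1+\lambda_2)^2}$ and $A=\matr{0 & 0 \\ s & 0}$, a direct computation gives $\det(F\pm A)=\lambda_1\lambda_2>0$, so that $F\pm A\in\GLp(2)$, together with $\norm{F\pm A}^2+2\det(F\pm A)=(\lambda_1+\lambda_2)^2+s^2=1$, and therefore $\Wdist(F\pm A)=1-2\det F$. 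A standard rank-one lamination with interface normal $(1,0)^T$ alternating between $F+A$ and $F-A$ in thin parallel slabs then produces a sequence $\vartheta_j\in\W1\infty_0(\Omega;\R^2)$ with $F+\grad\vartheta_j\in\{F+A,F-A\}\subset\GLp(2)$ on most of $\Omega$ and $\frac{1}{\abs{\Omega}}\int_\Omega\Wdist(F+\grad\vartheta_j)\,\dx\to 1-2\det F$.

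The main obstacle is the standard technicality of imposing the Dirichlet boundary condition $\vartheta_j=0$ on $\partial\Omega$ without losing the determinant constraint in the transition region. This is handled by confining the lamination to a compact subdomain of $\Omega$ and interpolating to zero via a smooth cutoff; the extra perturbations to $\grad\vartheta_j$ introduced by the cutoff have amplitude proportional to the sawtooth amplitude of the underlying scalar potential, which vanishes as the slabs are refined. The key algebraic fact allowing the determinant constraint to survive throughout the transition is that $\det(F+\tau A)=\lambda_1\lambda_2>0$ for every $\tau\in\R$, since $F$ is diagonal and $A$ is lower triangular with zero diagonal. As the relative measure of the boundary layer can thus be made arbitrarily small without leaving $\GLp(2)$, its contribution to the averaged energy vanishes, yielding $Q_{\GLp(2)}W(F)\leq 1-2\det F$ and completing the proof.
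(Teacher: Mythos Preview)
Your approach is essentially the same as the paper's: both prove the upper bound by finding a rank-one direction along which the determinant is constant (your $A=s\,e_2\otimes e_1$ after diagonalizing $F$ corresponds to the paper's $H=F\bigl(\begin{smallmatrix}0&1\\0&0\end{smallmatrix}\bigr)$ for general $F$), then picking two points on that rank-one line at which $\norm{\,\cdot\,}^2+2\det(\,\cdot\,)=1$, so that $W$ equals $1-2\det F$ there. The paper packages this as the inequality $R_{\GLp(2)}W(F)\leq 1-2\det F$ for the constrained rank-one convex envelope, which immediately gives $Q_{\GLp(2)}W(F)\leq R_{\GLp(2)}W(F)\leq 1-2\det F$ without constructing explicit test functions and hence without any boundary-layer argument.

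There is one genuine technical soft spot in your version. You invoke the Dacorogna representation \eqref{eq:dacorogna_formula} for the $+\infty$-extension of $W$ and use it in \emph{both} directions. For the upper bound this is harmless: any quasiconvex $Z\leq W$ on $\GLp(2)$ automatically satisfies $Z(F)\leq\frac{1}{|\Omega|}\int_\Omega W(F+\grad\vartheta_j)$ for your admissible $\vartheta_j$, so the constructed sequence bounds $Q_{\GLp(2)}W(F)$ from above regardless of whether the infimum formula actually equals the envelope. But for the lower bound you need the \emph{reverse} inequality $Q_{\GLp(2)}W(F)\geq$ (constrained infimum), i.e.\ that the constrained Dacorogna infimum is itself quasiconvex on $\GLp(2)$; the classical Dacorogna formula is stated for finite-valued energies and does not cover the $+\infty$-extension without further argument. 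The paper sidesteps this entirely: since the restriction to $\GLp(2)$ of any quasiconvex function on $\R^{2\times2}$ is quasiconvex on $\GLp(2)$, the function $Q_{\R^{2\times2}}\Wdist$ itself is an admissible competitor in the supremum defining $Q_{\GLp(2)}W$, giving $Q_{\GLp(2)}W\geq Q_{\R^{2\times2}}\Wdist$ directly. Replacing your lower-bound paragraph by this one-line observation removes the dependence on the constrained Dacorogna formula and makes the proof complete.
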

\begin{proof}
	Recall that in general, quasiconvexity on $\GLp(2)$ is defined as the quasiconvexity of the extension by $+\infty$ as in eq.~\eqref{eq:inftyExtension}. In particular, the restriction of any quasiconvex function on $\R^{2\times2}$ to $\GLp(2)$ is quasiconvex as well \cite{pedregal2012parametrized}, i.e.
	\[
		\{Z|_{\GLp(2)}\colon\R^{2\times 2}\to \R \setvert Z \;\,\text{quasiconvex}\,,\; Z\leq W\}
		\subset
		\{Z\colon\GLp(2)\to \R \setvert Z \;\,\text{quasiconvex}\,,\; Z\leq \Wdist\}
		\,,
	\]
	which immediately implies that
	\begin{equation}\label{eq:envelopes_constraint_inequality}
		Q_{\GLp(2)}W \geq \sup \{Z\colon\R^{2\times 2}\to \R \setvert Z \;\,\text{quasiconvex}\,,\; Z\leq W\} = Q_{\R^{2\times2}}\Wdist
		\,.
	\end{equation}
	In order to complete the proof,
	it therefore remains to show that $Q_{\GLp(2)}W \leq Q_{\R^{2\times2}}\Wdist$.
	Here, reiterating an excerpt of an earlier proof by Dolzmann \cite{dolzmann2012regularity}, we will show the sufficient condition
	\begin{equation*}
		R_{\GLp(2)}W(F) \leq Q_{\R^{2\times2}}\Wdist(F)
	\end{equation*}
	for $F\in\GLp(2)$, where $R_{\GLp(2)}W $ denotes the rank-one convex envelope of $W$ with respect to $\GLp(2)$.
	
	Without loss of generality,\footnote{Note that $R_{\GLp(2)}W\leq\Wdist(F)=Q_{\R^{2\times2}}\Wdist(F)$ for all $F\in\GLp(2)$ with $\norm{F}^2+2\.\det F\geq1$.} let $F\in\GLp(2)$ with $\norm{F}^2+2\.\det F<1$. We choose
	\[
		H = F\,\matr{0&1\\0&0}
		\,.
	\]
	Then $\rank(H)=1$ and
	\begin{equation}\label{eq:constantDeterminant}
		\det(F+tH) = \det F \cdot \det(\id_2+tH) = \det F
		\,,
	\end{equation}
	thus $F+tH\in\GLp(2)$ for all $t\in\R$. Since $\sqrt{\norm{F}^2+2\.\det F}-1 < 0$ by assumption on $F$, and since
	\[
		\lim_{t\to+\infty} \norm{F+tH}^2 = \lim_{t\to-\infty} \norm{F+tH}^2 = +\infty
		\,,
	\]
	there exist $t_1<0$ and $t_2>0$ such that
	\[
		\sqrt{\norm{F+t_1H}^2+2\.\det F}-1
		= \sqrt{\norm{F+t_2H}^2+2\.\det F}-1
		= 0
	\]
	and thus, due to \eqref{eq:constantDeterminant},
	\begin{align*}
		W(F+t_i H) &= \sqrt{\norm{F+t_i H}^2+2\.\det (F+t_i H)}-1)^2 + 1 - 2\,\det(F+t_i H)
		\\&= \sqrt{\norm{F+t_i H}^2+2\.\det F}-1)^2 + 1 - 2\,\det F
		= 1 - 2\,\det F
	\end{align*}
	for $i\in\{1,2\}$.
	Then with $\lambda=\frac{t_1}{t_2-t_1}\in(0,1)$, we find
	\[
		\lambda(F+t_2H) + (1-\lambda)(F+t_1H) = F + (t_1 - \lambda\.(t_2-t_1)) H = F
	\]
	and therefore, since $F$ is the rank-one convex combination of $F+t_2H\in\GLp(2)$ and $F+t_1H\in\GLp(2)$,
	\begin{equation}\label{eq:rank_one_envelope_inequality}
		R_{\GLp(2)}W(F) \leq \lambda W(F+t_2H) + (1-\lambda) W(F+t_1H) = 1-2\,\det F = Q_{\R^{2\times2}}\Wdist(F)
		\,.
	\end{equation}
	Finally, we combine \eqref{eq:envelopes_constraint_inequality} and \eqref{eq:rank_one_envelope_inequality} to find
	\[
		Q_{\GLp(2)}W(F) \leq R_{\GLp(2)}W(F) \leq Q_{\R^{2\times2}}\Wdist(F) \leq Q_{\GLp(2)}W(F)
	\]
	and thus $Q_{\GLp(2)}W(F) = Q_{\R^{2\times2}}\Wdist(F)$ for all $F\in\GLp(2)$.
\end{proof}

According to Proposition~\ref{proposition:constrained_envelope}, at any $F\in\GLp(2)$, the quasiconvex relaxation of $\Wdist$ with respect to $\R^{2\times2}$ is indeed equal to its relaxation over $\GLp(2)$ only. However, this equality relies on the specific extension of $\Wdist$ to $\R^{2\times2}$ by the mapping $F\mapsto\dist^2(F,\SO(2))$, which is in general not identical to $\WBiot(F)=\norm{\sqrt{F^TF}-\id_2}^2$ for $F\notin\GLp(2)$.

In particular, the quasiconvex relaxation $Q_{\GLp(2)}\WBiot(F)$ of $\WBiot$ over $\GLp(2)$ is \emph{not} identical to its relaxation $Q_{\R^{2\times2}}\WBiot(F)$ over all of $\R^{2\times2}$, even at $F\in\GLp(2)$. For example, consider the case of simple shear (cf.~Fig.~\ref{fig:envelopes_shear}): For $\gamma\in\R$, let
\[
	F_\gamma = \matr{1&\gamma\\0&1}\in\GLp(2)
	\,,\quad\text{ with singular values}\quad
	\lambda_{1} = \frac{\sqrt{\gamma^2+4}+ \abs{\gamma}}{2} \geq 1
	\,,\quad
	\lambda_{2} = \frac{\sqrt{\gamma^2+4}- \abs{\gamma}}{2} \leq 1
	\,.
\]
Then according to
\eqref{eq:simple_shear_relaxation_Rnn} and Proposition~\ref{proposition:constrained_envelope},
\[
	Q_{\GLp(2)}\WBiot(F_\gamma)
	= Q_{\R^{2\times2}}\Wdist(F_\gamma)
	= (\sqrt{4+\gamma^2}-1)^2 - 1
	= \WBiot(F_\gamma)
\]
for all $\gamma\in\R$
but due to
Proposition~\ref{prop:biotRnnEnvelope},
\begin{align*}
	Q_{\R^{2\times2}}\WBiot(F_\gamma)
	= [\lambda_1-1]_+^2 + [\lambda_2-1]_+^2
	&= \Bigl[\frac{\sqrt{\gamma^2+4}+ \abs{\gamma}}{2} - 1\Bigr]_+^2 + \Bigl[\frac{\sqrt{\gamma^2+4}- \abs{\gamma}}{2} - 1\Bigr]_+^2
	\\&= \Bigl(\frac{\sqrt{\gamma^2+4}+ \abs{\gamma}}{2} - 1\Bigr)^2
	\;<\; \WBiot(F_\gamma)
	\quad\text{ for all }\;\gamma\neq0
	\,.
\end{align*}

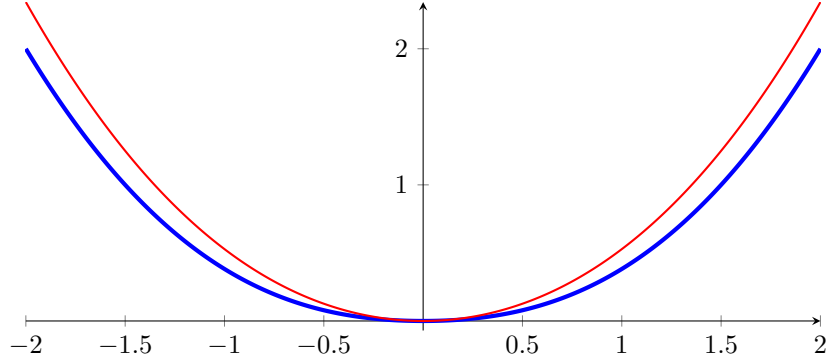
\begin{figure}[h!]
	\centering
	\begin{tikzpicture}
			\begin{axis}[
			graphaxis,ytick={1,2},
			ymin=-.07,
			width=.7\textwidth,
			height=.343\textwidth]
				\addplot[domain=-2:2,Qdiststyle]{((sqrt(x^2+4)+ abs(x))/2 - 1)^2};
				\addplot[domain=-2:2,diststyle]{(sqrt(4+x^2)-1)^2-1};
			\end{axis}
	\end{tikzpicture}%
	\caption{\label{fig:envelopes_shear}The mappings $\gamma\mapsto\WBiot(F_\gamma)=Q_{\GLp(2)}\WBiot(F_\gamma)$ in red and $\gamma\mapsto Q_{\R^{2\times2}}\WBiot(F_\gamma)$ in blue, representing the two different relaxations of $\WBiot$, with and without determinant constraint, in the simple shear case.%
	}%
\end{figure}%

\subsection{Summary}
\label{section:analytical_results_summary}

\noindent The results from Sections \ref{section:envelopeDistSO2}--\ref{section:envelopesGlp} regarding the quasiconvex envelopes of the Biot-type energies $\WBiot$ and $\Wdist$ on $\R^{2\times2}$ and on $\GLp(2)$ can be summarized as follows (cf.~Fig.~\ref{fig:energy_surfaces}): Let
\begin{alignat*}{2}
	&\WBiot\colon &\R^{2\times 2}\to \R\,,\qquad \WBiot(F) &= \norm{\sqrt{F^TF}-\id_2}^2\,,
	\\[.49em]
	&\Wdist\colon &\R^{2\times 2}\to\R\,,\qquad \Wdist(F) &= \dist^2(F,\SO(2))\,.
\end{alignat*}
Then
\begin{alignat}{2}
	&Q_{\R^{2\times2}}\WBiot(F) &&=
		\sum_{k=1}^n[\lambda_k-1]^2_+
		\,,\qquad 
		[x]_+\colonequals
		\begin{cases}
			x &\casesif x\geq 0\,,\\
			0 &\casesif x<0\,,
		\end{cases}
	\\[.49em]
	&Q_{\R^{2\times2}}\Wdist(F) &&=
		\begin{cases}
			1-2\.\det F &\casesif \norm{F}^2+2\.\det F< 1\,, \\
			(\sqrt{\norm{F}^2+2\.\det F}-1)^2+1-2\.\det F &\casesif \norm{F}^2+2\.\det F\geq 1
		\end{cases}
\intertext{for any $F\in\R^{2\times2}$ with singular values $\lambda_1,\lambda_2$, and}
	&\mathrlap{Q_{\GLp(2)}\WBiot(F) =
		Q_{\GLp(2)}\Wdist(F) = Q_{\R^{2\times2}}\Wdist(F)
		} \label{eq:summary_envelope_constrained}
\end{alignat}
for any $F\in\GLp(2)$ with singular values $\lambda_1,\lambda_2$.

\begin{figure}[h!]
	\centering%
	\begin{tikzpicture}
		\begin{axis}[samples=20, view={21}{21}, zmax=10.01,
						colormap={hotCustom}{color=(blue) color=(yellow) color=(yellow) color=(orange) color=(orange) color=(red) color=(red)}
					]
			\addplot3[surf, domain=-2:2, shader=interp] {(abs(x+y)-1)^2 + 1-2*x*y};
			\addplot3[mesh, domain=-2:2, color=black] {(abs(x+y)-1)^2 + 1-2*x*y};
		\end{axis}
		\node[right] at (3.43,4.41){$\Wdist$};
	\end{tikzpicture}%
	\hfill%
	\begin{tikzpicture}
		\begin{axis}[samples=20, view={21}{21}, zmax=10.01,
						colormap={hotCustom}{color=(blue) color=(yellow) color=(yellow) color=(orange) color=(orange) color=(red) color=(red)}
					]
			\addplot3[surf, domain=-2:2, shader=interp] {(x+y)^2<1? 1-2*x*y : (abs(x+y)-1)^2 + 1-2*x*y};
			\addplot3[mesh, domain=-2:2, color=black] {(x+y)^2<1? 1-2*x*y : (abs(x+y)-1)^2 + 1-2*x*y};
		\end{axis}
		\node[right] at (3.43,4.41){$Q_{\R^{2\times2}}\Wdist$};
	\end{tikzpicture}%
	\par%
	\begin{tikzpicture}
		\begin{axis}[samples=20, view={21}{21}, zmax=10.01,
						colormap={hotCustom}{color=(blue) color=(yellow) color=(yellow) color=(orange) color=(orange) color=(red) color=(red)}
					]
			\addplot3[surf, domain=-2:2, shader=interp] {(abs(x)-1)^2+(abs(y)-1)^2};
			\addplot3[mesh, domain=-2:2, color=black] {(abs(x)-1)^2+(abs(y)-1)^2};
		\end{axis}
		\node[right] at (3.43,3.43){$\WBiot$};
	\end{tikzpicture}%
	\hfill%
	\begin{tikzpicture}
		\begin{axis}[samples=20, view={21}{21}, zmax=10.01,
						colormap={hotCustom}{color=(blue) color=(yellow) color=(yellow) color=(orange) color=(orange) color=(red) color=(red)}
					]
			\addplot3[surf, domain=-2:2, shader=interp] {(abs(x)-1<0?0:(abs(x)-1)^2)+(abs(y)-1<0?0:(abs(y)-1)^2)};
			\addplot3[mesh, domain=-2:2, color=black] {(abs(x)-1<0?0:(abs(x)-1)^2)+(abs(y)-1<0?0:(abs(y)-1)^2)};
		\end{axis}
		\node[right] at (3.43,3.43){$Q_{\R^{2\times2}}\WBiot$};
	\end{tikzpicture}
	\par%
	\begin{tikzpicture}[scale=.637]
		\begin{axis}[samples=20, view={21}{21}, zmax=3.43,
						colormap={hotCustom}{color=(blue) color=(yellow) color=(yellow) color=(orange) color=(orange) color=(red) color=(red)}
					]
			\addplot3[surf, domain=0:2, shader=interp] {(abs(x)-1)^2+(abs(y)-1)^2};
			\addplot3[mesh, domain=0:2, color=black] {(abs(x)-1)^2+(abs(y)-1)^2};
		\end{axis}
		\node[right] at (2.94,4.41){\small$\WBiot=\Wdist$};
	\end{tikzpicture}%
	\hfill%
	\begin{tikzpicture}[scale=.637]
		\begin{axis}[samples=20, view={21}{21}, zmax=3.43,
						colormap={hotCustom}{color=(blue) color=(yellow) color=(yellow) color=(orange) color=(orange) color=(red) color=(red)}
					]
			\addplot3[surf, domain=0:2, shader=interp] {(x+y)^2<1? 1-2*x*y : (abs(x+y)-1)^2 + 1-2*x*y};
			\addplot3[mesh, domain=0:2, color=black] {(x+y)^2<1? 1-2*x*y : (abs(x+y)-1)^2 + 1-2*x*y};
		\end{axis}
		\fill[white] (4.9,4.165) rectangle (7,4.606);
		\node[right] at (1.96,4.41){\footnotesize$Q_{\GLp(2)}\WBiot=Q\Wdist$};
	\end{tikzpicture}%
	\hfill%
	\begin{tikzpicture}[scale=.637]
		\begin{axis}[samples=20, view={21}{21}, zmax=3.43,
						colormap={hotCustom}{color=(blue) color=(yellow) color=(yellow) color=(orange) color=(orange) color=(red) color=(red)}
					]
			\addplot3[surf, domain=0:2, shader=interp] {(abs(x)-1<0?0:(abs(x)-1)^2)+(abs(y)-1<0?0:(abs(y)-1)^2)};
			\addplot3[mesh, domain=0:2, color=black] {(abs(x)-1<0?0:(abs(x)-1)^2)+(abs(y)-1<0?0:(abs(y)-1)^2)};
		\end{axis}
		\node[right] at (2.94,4.41){\small$Q_{\R^{2\times2}}\WBiot$};
	\end{tikzpicture}%
	\caption{\label{fig:energy_surfaces}The energy functions $\WBiot$ and $\Wdist$ and their relaxations on the set $\{\diag(x,y)\setvert (x,y)\in\R^2\}$ of diagonal matrices. Note again that $Q_{\GLp(2)}\WBiot=Q_{\GLp(2)}\Wdist=Q_{\R^{2\times2}}\Wdist\neq Q_{\R^{2\times2}}\WBiot$.}
\end{figure}

\section{Numerical relaxation results}
\label{sec:numerical_results}
\let\boldsymbolold\boldsymbol%
\renewcommand{\boldsymbol}[1]{#1}%

Due to the explicit analytical results provided in Section~\ref{section:analyticalResults}, the energy functions $\WBiot$ and $\Wdist$ can serve as a benchmark for numerical relaxation methods, since it is possible to compare the numerically obtained energy levels directly to their analytically optimal counterparts. The difference between the envelopes on the domains $\R^{2\times2}$ and $\GLp(2)$, in particular, allows for the separate evaluation of techniques in the determinant-constrained and the unconstrained case.

In the following, we will apply three numerical relaxation methods to the Biot-type energies: two methods for finding the quasiconvex envelope via energy-minimizing deformations under homogeneous boundary conditions and a lamination technique for determining the rank-one convex envelope.

\subsection{Relaxation in a finite element space}
\label{section:FEM}

For any $F\in\R^{2\times2}$ with $Q\WBiot(F) < \WBiot(F)$, which, in particular, is the case for \emph{compressions} with sufficiently large strains \cite{li1996numerical,Kumar04}, the affine linear deformation $x\mapsto Fx$ is no longer energy-optimal under the corresponding homogeneous Dirichlet boundary conditions. Therefore, a material is expected to exhibit microstructures \cite{bartels2006relaxation} under such boundary conditions. In addition to the rank-one convexification considered in Section~\ref{sec:rank-one_convexification}, we will demonstrate the emergence of such microstructures when minimizing the energies in a space of first-order Lagrange finite elements, using a trust-region optimization solver \cite{conn_gould_toint:2000}.

Choosing $\Omega=[-1,1]^2$ and the homogeneous Dirichlet boundary conditions $\phi(x)=F_0\.x$ for $x\in\partial\Omega$ with $F_0=0.4\cdot\id_2$, we numerically minimize the strain energy
\[
	I(\varphi) = \int_\Omega W(\grad\varphi(x))\,\dx
\]
on a grid of $20\times20$ quadrilateral elements. Figure~\ref{fig:fem_results} shows the resulting deformations and the energy functional values for $W=\WBiot$ and $W=\Wdist$ without any determinant constraints, as well as for $W=\WBiot$ with a penalty term for negative determinants:
\[
	\WBiot^+(F)
	= \WBiot(F) + k\.[-\det F]_+
	= \begin{cases}
		\norm{\sqrt{F^T F}-\id_2}^2 &\casesif \det F > 0\,,\\
		\norm{\sqrt{F^T F}-\id_2}^2 - k\.\det F &\casesif \det F < 0\,,
	\end{cases}
\]
where we choose $k=10^5$. In all cases, the energy value at the minimizer is significantly below the energy $\meas{\Omega}\cdot W(F_0)=2.88$ for the homogeneous deformation.

For $\WBiot$ without penalization, the deformation shows clear self-intersection; note that the range is not contained in the homogeneously deformed square $0.4\cdot\Omega$ and that the gradients near the boundary are not orientation-preserving.

On the other hand, no intersections can be observed for the numerical minimizer $\varphi_\subtext{dist}$ of $\Wdist$; in particular, $\det\grad\varphi_{\subtext{dist}}>0$ everywhere. Similarly, the determinant penalization included in $\WBiot^+$ prevents any self-intersection. Consequently, since $\WBiot(F)=\Wdist(F)$ for all $F\in\GLp(2)$, the deformations (and thus the energy levels) for the minimizer of $\Wdist$ and the (penalized) minimizer of $\WBiot^+$ are numerically identical. We also note that for $\Wdist$, the achieved numerical energy value is close to the infimum $\meas{\Omega}\cdot Q\Wdist(F_0)=4\cdot0.68=2.72$, cf.~eq.~\eqref{eq:compression_energy_values}.

\begin{figure*}[h]
    \centering
    \vspace*{1.47em}
    \begin{subfigure}[b]{0.315\textwidth}
        \centering
        \includegraphics[width=\textwidth]{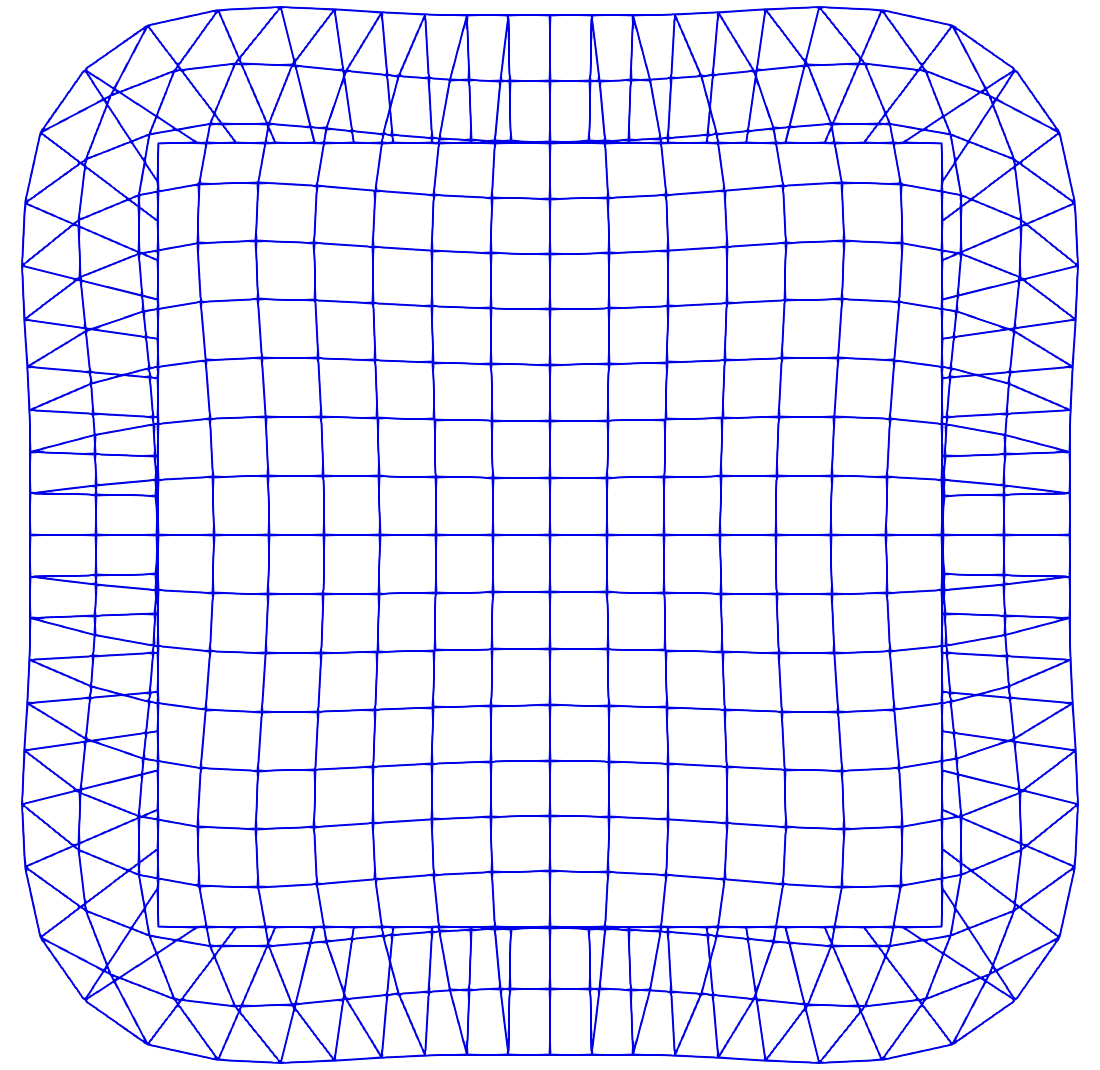}
        \caption{$\WBiot\colon\;I=1.50965$}
        \label{fig:fem-WBiot}
    \end{subfigure}
    \hfill
    \begin{subfigure}[b]{0.315\textwidth}
        \centering
        \includegraphics[width=\textwidth]{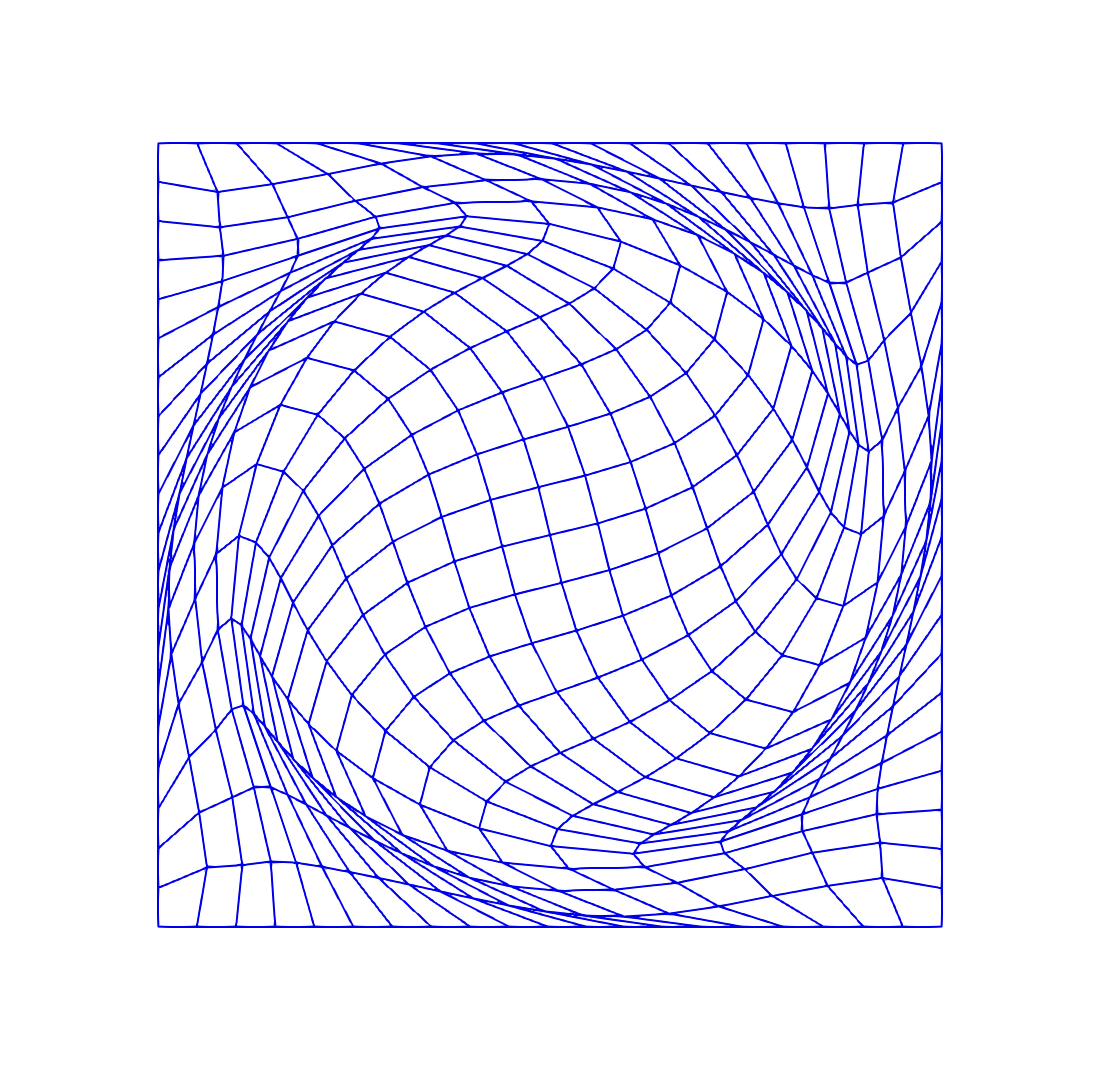}
        \caption{$\Wdist\colon\;I=2.72298$}
        \label{fig:fem-Wdist}
    \end{subfigure}
    \hfill
    \begin{subfigure}[b]{0.315\textwidth}
        \centering
        \includegraphics[width=\textwidth]{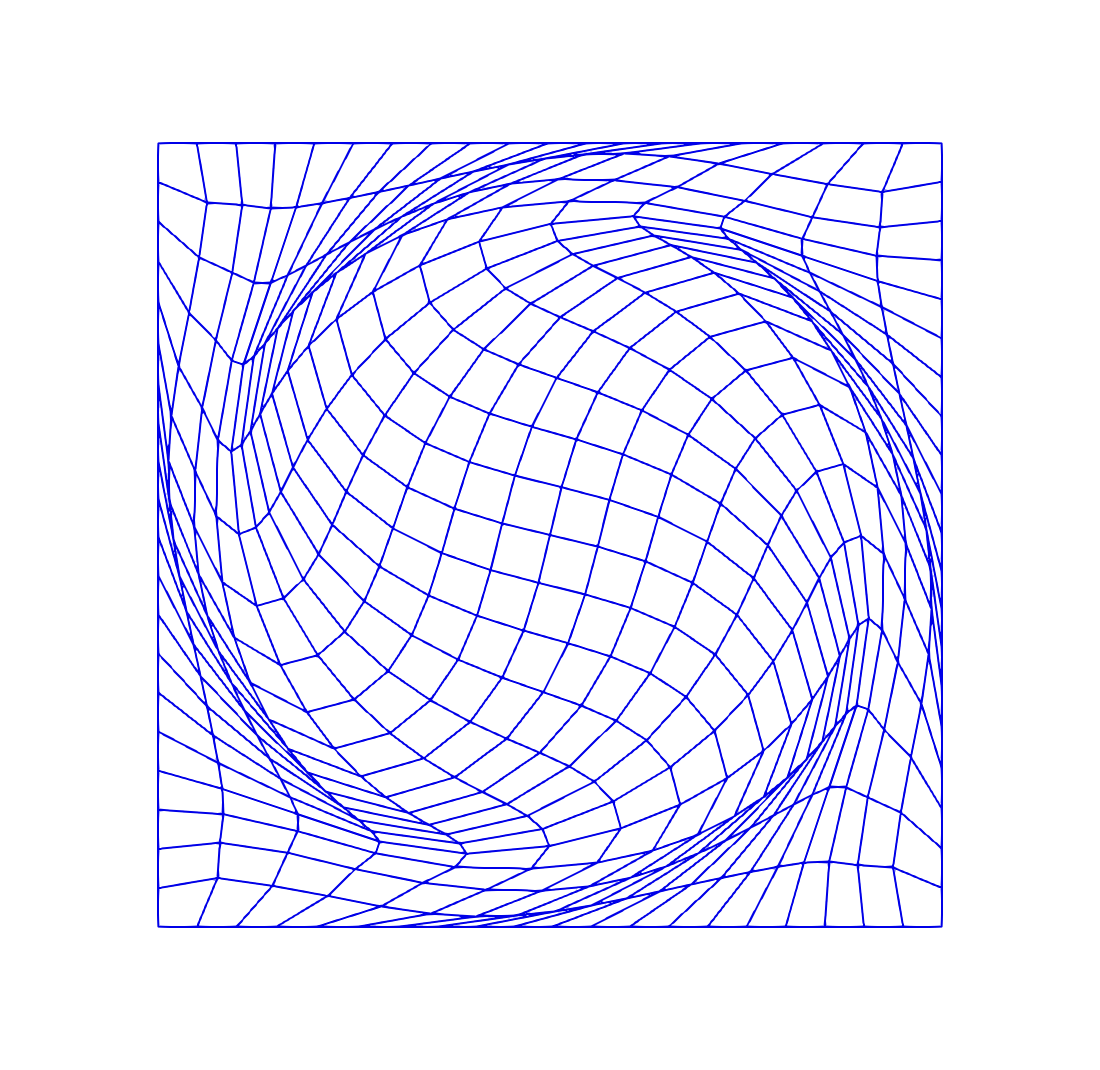}
        \caption{$\WBiot^+$: $I=2.72298$}
        \label{fig:fem-WBiot_constrained}
    \end{subfigure}
    \caption{\label{fig:fem_results} Resulting deformations and energy functional values after trust-region FEM minimization with the energy densities $\WBiot$, $\Wdist$ and $\WBiot^+$, i.e.\ $\WBiot$ with a determinant penalization, under the homogeneous Dirichlet boundary condition $\varphi(x)=0.4\.x$.}
\end{figure*}

%

\clearpage

\subsection{Physics-informed neural networks}
\label{section:PINN}

The energy functionals induced by the energy densities $\WBiot$ and $\Wdist$ can also be minimized numerically using \emph{physics-informed neural networks} (PINNs) \cite{lagaris1998artificial,raissi2019physics,agn_voss2021morrey2}. In the following, we will consider a simple feed-forward neural network of the form
\[
	\Phi\col\R^2\to\R^2\,,\quad\;\;
	\Phi = \Lcal^{3}\circ\phi\circ\Lcal^{2}\circ\phi\circ\Lcal^{1}
	\,,
\]
where $\phi\col\R^n\to\R^n$ with $\phi(x)_i = \tanh(x_i)$
is the $\tanh$ activation function and
$
	\Lcal^i(x) = A^ix+b^i
$
represents the $i$-th fully connected layer; here, $A^1\in\R^{2\times256}$, $A^2\in\R^{256\times256}$, $A^3\in\R^{256\times2}$, $b_1,b_2\in\R^{256}$ and $b_3\in\R^2$ are the free parameters. In order to enforce the Dirichlet boundary condition $\varphi(x)=F_0\.x$ on $\partial\Omega$, the deformation $\varphi$ is given by
\[
	\varphi\col\Omega\to\R^2\,,\quad\;\;
	\varphi(x) = \psi(x)\cdot \Phi(x) + F_0\.x
	\,,
\]
where $\psi\in C^0(\overline{\Omega})$ is chosen such that
\[
	\psi(x) = 0 \quad\iff\quad x\in\partial\Omega
	\,.
\]
The total energy is then approximated via simple Monte-Carlo integration, using $10^5$ points randomly sampled from a uniform distribution over $[-1,1]^2$, and the parameter optimization is performed using the \emph{Adam} gradient descent algorithm \cite{KingmaEtAl2017} with a learning rate of $10^{-3}$. The deformation gradient $F=\grad\varphi$ required to compute the energy $W(F)$ at each sampled point is obtained by automatic differentiation via the \emph{TensorFlow} framework.

We consider three distinct minimization problems on the domain $\Omega=[-1,1]^2$ with the homogeneous boundary condition given by $F_0=0.4\cdot\id_2$: first, we minimize the energy $W=\WBiot$ without additional constraints; next, we consider the modified energy function $W=\WBiot^+\col\R^{2\times2}\to\R$ with
\begin{align}
	\WBiot^+(F) &= \WBiot(F) + k\cdot [-\det(F)]_+^2
	\\
	&=\begin{cases}
		\norm{\sqrt{F^T F}-\id_2}^2 &\casesif \det F>0\,,
		\\
		\norm{\sqrt{F^T F}-\id_2}^2 + k\.(\det F)^2 &\casesif \det F\leq 0
	\end{cases}
\end{align}
with $k=10^5$; finally, we directly minimize the energy $W=\Wdist$. The optimization was performed for up to $500$ epochs (cf.~Fig.~\ref{fig:pinn_training}) and yielded energy levels significantly below the energy $W(F_0)$ of the homogeneous deformation, although the infimum $QW(F_0)=0$ is not approached in the case of the unconstrained relaxation of the energy $\WBiot$.
The resulting deformations are shown in Figs.~\ref{fig:pinn_unconstrained}--\ref{fig:pinn_dist}.

\begin{figure}[h]%
	\begin{center}%
		\begin{minipage}[t]{.49\textwidth}%
			\centering%
			\includegraphics[height=.7\textwidth]{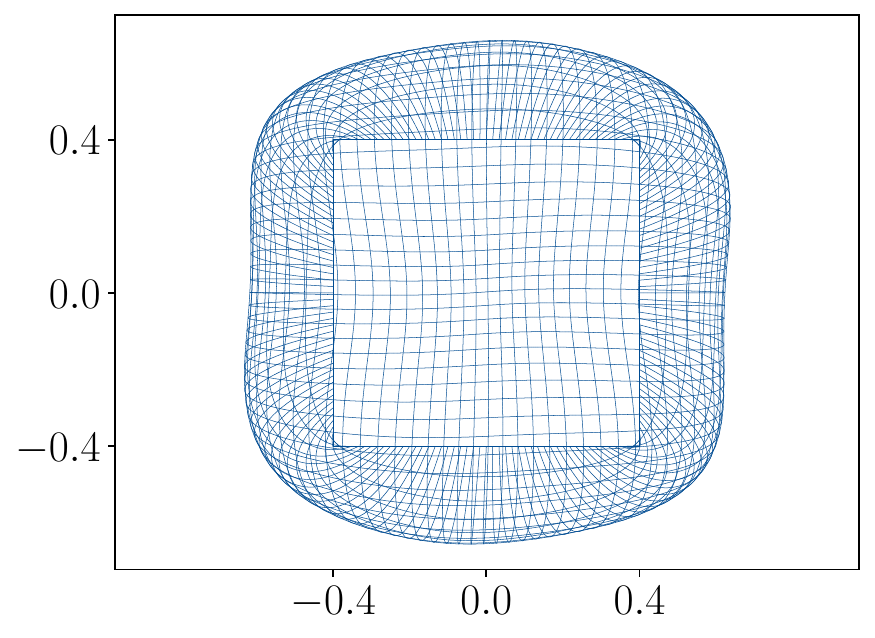}%
			\captionof{figure}{\label{fig:pinn_unconstrained}PINN deformation for the Biot energy $\WBiot$ without determinant constraints or penalties, corresponding to the energy level $I=1.4127$.}%
		\end{minipage}
		\hfill%
		\begin{minipage}[t]{.49\textwidth}%
			\centering%
			\includegraphics[height=.7\textwidth]{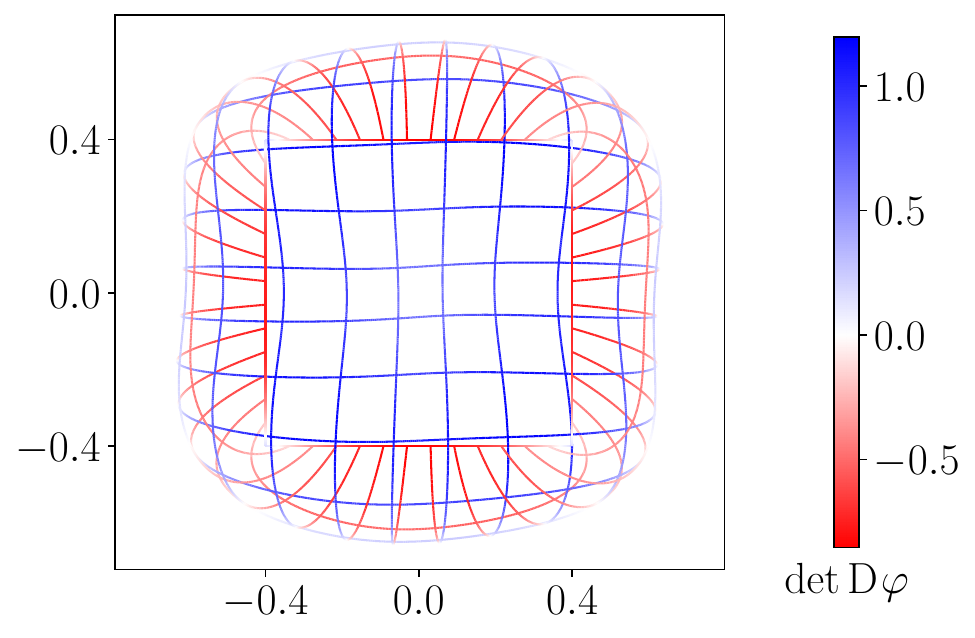}%
			\captionof{figure}{\label{fig:pinn_self_intersection}The unconstrained minimization of $\WBiot$ leads to negative determinants of the deformation gradient and to finite self-intersections.}%
		\end{minipage}%
	\end{center}%
\end{figure}%

\clearpage

\begin{figure}[h]%
	\vspace*{-1.47em}
	\begin{center}%
		\begin{minipage}[t]{0.49\textwidth}%
			\centering%
			\includegraphics[width=\textwidth]{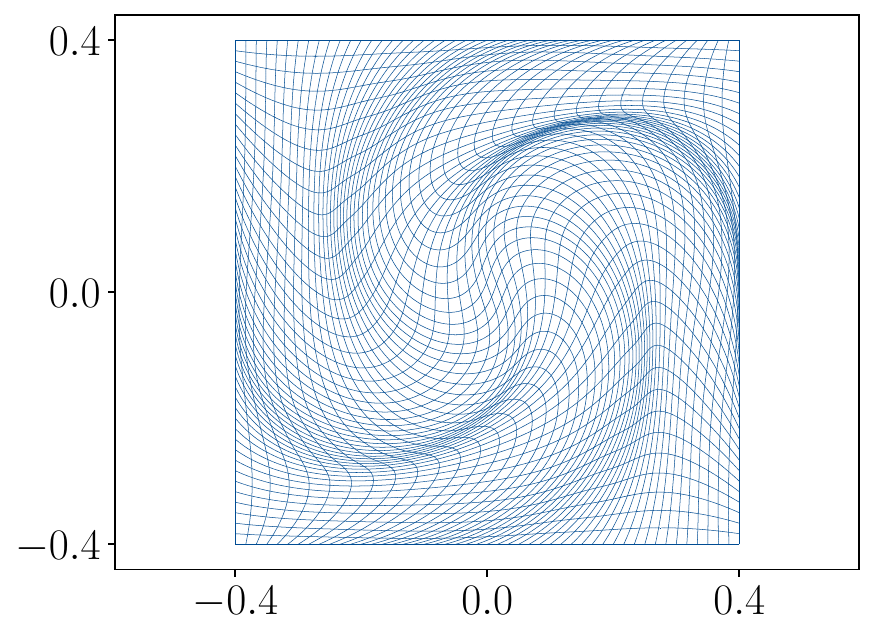}%
			\vspace*{-.49em}%
			\captionof{figure}{\label{fig:pinn_constrained}PINN deformation for the energy $\WBiot^+$, i.e.\ $\WBiot$ with an additive determinant penalty, with the energy level $I=2.7243$ on $[-1,1]^2$.}%
		\end{minipage}%
		\hfill%
		\begin{minipage}[t]{0.49\textwidth}%
			\centering%
			\includegraphics[width=\textwidth]{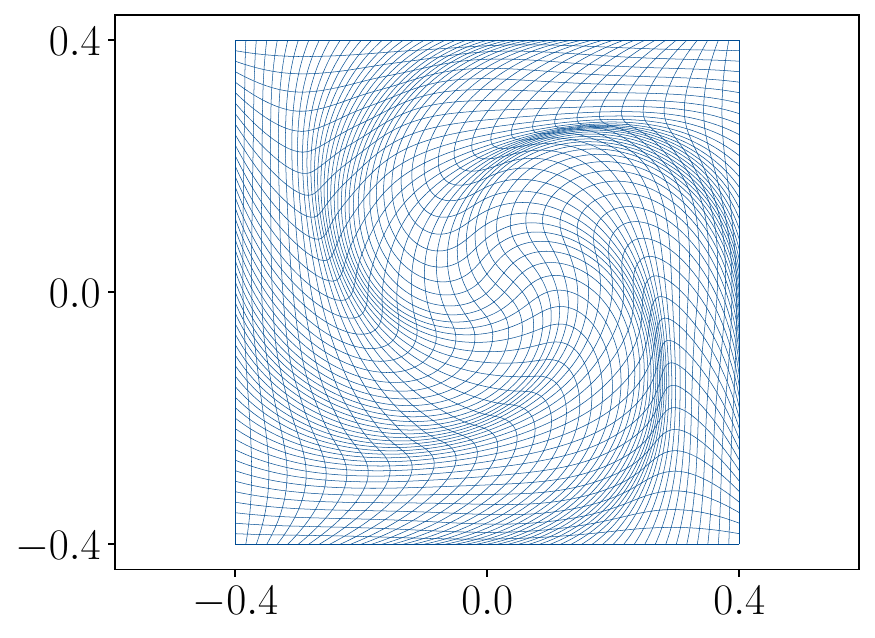}%
			\vspace*{-.49em}%
			\captionof{figure}{\label{fig:pinn_dist}PINN deformation for the energy $\Wdist$ without determinant constraints or penalties with the energy level $I=2.7241$ on $[-1,1]^2$.}%
		\end{minipage}%
		\vspace*{-.49em}%
	\end{center}%
\end{figure}%

The results are qualitatively similar to the FEM results shown in Figure~\ref{fig:fem_results}. In particular, under the deformation induced by the pure Biot energy $\WBiot$, the body self-intersects significantly, as shown in Fig.~\ref{fig:pinn_self_intersection}. However, introducing the penalty term $k\cdot[\det F]_+^2$ with sufficiently large $k>0$ effectively enforces the constraint $\det F > 0$ and thus prevents self-intersection, as shown in Fig.~\ref{fig:pinn_constrained}. Finally, a very similar result to the minimization of $\WBiot^+$ is obtained with the energy $\Wdist$, without any additional penalty terms. As in the FEM computations, self-intersection is not a priori ruled out in this case, but does not occur. Note again that in contrast to $\WBiot$, the globally infimal energy for $\Wdist$ can be reached without local self-intersection; recall that $\Wdist=\WBiot$ on $\GLp(2)$ and thus, according to \eqref{eq:summary_envelope_constrained},
	\begin{alignat}{2}
		\inf_{\varphi\in\adm} \int_\Omega \Wdist(\grad\varphi(x))\,\dx
		&= \meas{\Omega}\cdot Q_{\R^{2\times2}}\Wdist(F_0)
		\nonumber\\
		&= \meas{\Omega}\cdot Q_{\GLp(2)}\WBiot(F_0)
		&&= \inf_{\substack{\varphi\in\adm\\\det \grad\varphi >0\text{ a.e.}}} \int_\Omega \WBiot(\grad\varphi(x))\,\dx\label{eq:self_intersection_no_advantage}\\
		&&&= \inf_{\substack{\varphi\in\adm\\\det \grad\varphi >0\text{ a.e.}}} \int_\Omega \Wdist(\grad\varphi(x))\,\dx\nonumber
		\,,
	\end{alignat}
where $\adm=\{\varphi+F_0 \setvert \in \W1\infty_0(\Omega;\R^2)\}$. Therefore, any self-intersection would not lead to a lower energy level, although \eqref{eq:self_intersection_no_advantage} does not rule out a self-intersecting deformation with the same energy as the one shown in Fig.~\ref{fig:pinn_dist}.

\begin{figure}[h]%
	\begin{center}%
		\includegraphics[width=.441\textwidth]{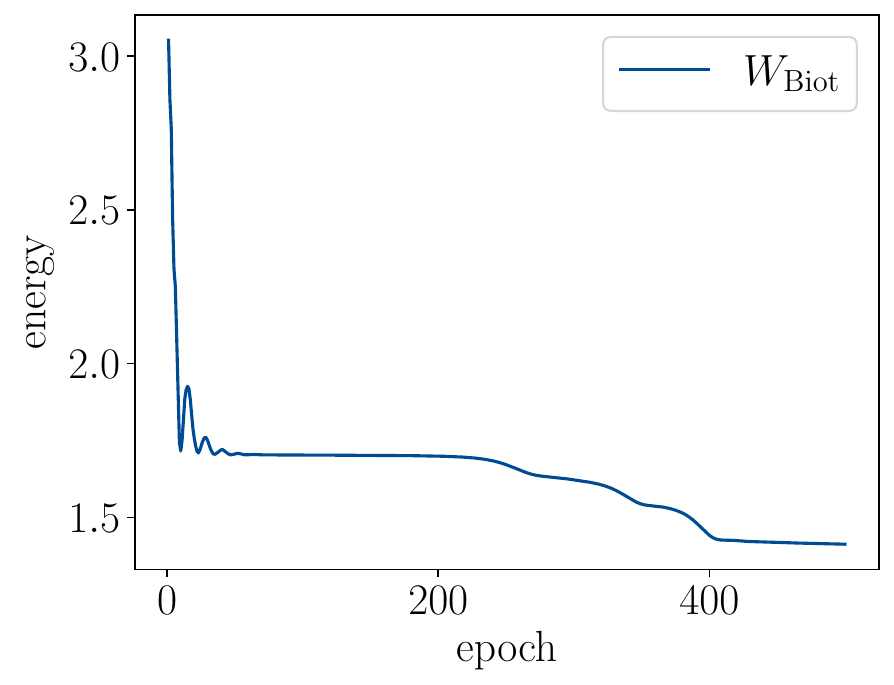}%
		\qquad%
		\includegraphics[width=.441\textwidth]{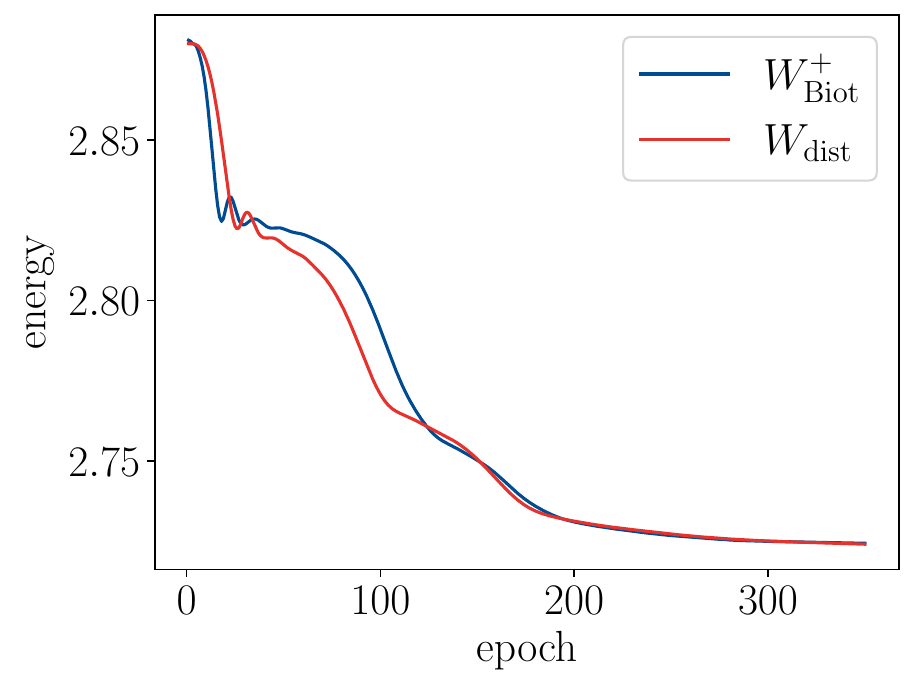}%
		\vspace*{-1.47em}%
	\end{center}%
	\caption{\label{fig:pinn_training}Energy values during $100$ epochs of training the neural network with different energy loss functions.}%
	\vspace*{-3.43em}
\end{figure}%

\subsection{Rank-one convexification}
\label{sec:rank-one_convexification}

Finally, we numerically determine the relaxation of the Biot-type energy via its rank-one convex envelope; cf.~eqs.~\eqref{eq:brighi_envelopes} and \eqref{eq:valanis_landel_envelopes} and recall that in general $QW\leq RW$ for any real-valued function $W$ on $\Rnn$ or $\GLpn$. We briefly recall the algorithm presented in \cite{BalKohNeuPetPet:2022:mrc}, abbreviated as ROC in the following, which is methodologically rooted in \cite{Bar:2004:lca},\cite[Chapter 9.3.4]{Bar:2015:nmn} and \cite{DolWal:2000:ena}.
The energy density $W\colon\R^{2\times2} \to \R$ is discretized as a piecewise linear multivariate interpolation $W_{\delta,r}$ on a grid
\begin{equation} \label{eq:mesh}
	\mathcal{N}_{\delta, r} 
	= \delta \, \mathbb{Z}^{2\times2} \cap \left\{\boldsymbol{A}\in \R^{2\times2} \,\big\vert\, |\boldsymbol{A}|_\infty \leq r\right\}
\end{equation}
that includes arbitrary points in $\R^{2\times 2}$ and thus does not restrict the grid points to be within $\GLp(2)$.
The constraint to $\GLp(2)$ will be discussed later.
The grid has a width parameter \(\delta \in \R\) and a radius parameter \(r \in \R\).
In contrast to the convexification of inelastic incremental stress potentials, e.g.\ damage [4], i.e.\ stress-softening \cite{schmidt2016relaxed} or strain-softening via reconvexification \cite{kohler2023evolving}, the convexification in elasticity is based on strain energy densities which do not change by an evolution of internal variables. Therefore,
it is a reasonable strategy to estimate the non-convex regime, and thus associated numerical parameters, by plotting the original energy density, cf. Fig.~\ref{fig:biot-hseq-R}.
When considering $\WBiot$, we choose the discretization radius $r=2.0$ and the grid width $\delta=0.1$ for the relaxation on $\R^{2\times2}$.
In the case of relaxation on $\GLp(2)$ we restrict the discretization space to the compression regime for the diagonal components of the deformation gradient to $[\delta,1]$ and for the off-diagonal components to $[-1,1]$.

As shown in \cite{BalKohNeuPetPet:2022:mrc} a small rank-one direction set is often sufficient for relaxation.
Thus, we use the set
\begin{equation} \label{eq:bartelsdirections}
	\mathcal{R}^1_{l=1} = \{\delta \, \boldsymbol{a} \otimes \boldsymbol{b} \setvert \boldsymbol{a},\boldsymbol{b} \in \mathbb{Z}^d,\; \abs{\boldsymbol{a}}_\infty,\abs{\boldsymbol{b}}_\infty \leq l\},
\end{equation}
with the shorthand notation \(\mathcal{R}\).
The algorithm is initialized by \(W_{\delta, r}^{0}(\boldsymbol{F}) = W(\boldsymbol{F})\) for all \(\boldsymbol{F} \in \mathcal{N}_{\delta, r}\).
For all points in $\mathcal{N}_{\delta,r}$, the linear optimization problem
\begin{equation} \label{eq:minglobiter}
	W_{\delta, r}^{k+1} (\boldsymbol{F}) = \inf_{\boldsymbol{R},l_1,l_2,\lambda} 
	\left\{\lambda \, W^{k}_{\delta, r} (\boldsymbol{F} + \delta l_1 \boldsymbol{R}) + (1 - \lambda) \, W^{k}_{\delta, r} (\boldsymbol{F} + \delta l_2 \boldsymbol{R}) \, \Big\vert \, 
	\substack{\boldsymbol{R} \in \mathcal{R}, \, \lambda \in [0,1], \, l_1, l_2 \in \mathbb{Z}, \\
	\lambda \, l_1 + (1 - \lambda) \, l_2 = 0}
	\right\}
\end{equation}
is then solved. Points that are not part of $\mathcal{N}_{\delta,r}$ are interpolated.
In eq.~\eqref{eq:minglobiter}, the constraints on $l_1,l_2$ and $\lambda$ ensure that the convex combination leads to $\boldsymbol{F}$.
As the convergence criterion we use a maximum number $k_{\subtext{max}}$ of iterations.

To incorporate the determinant constraint for restricting the relaxation to $\GLp(2)$, we change the evaluation of the line that is to be convexified for a given rank-one direction $\boldsymbol{R} \in \mathcal{R}$ through the point $\boldsymbol{F}$ to
\begin{align} \label{eq:rankOneLineGLp}
	\ell_{\delta,+}(\boldsymbol{F}, \boldsymbol{R}) = \left\{\boldsymbol{F} + l \,\delta \,\boldsymbol{R} \mid l \in \mathbb{Z} \right\} \cap \operatorname{conv}(\mathcal{N}_{\delta, r}) \cap \GLp(2)\,,
\end{align}
where $\operatorname{conv}(\mathcal{N}_{\delta, r})$ denotes the convex hull of the set $\mathcal{N}_{\delta, r}$, so that for fixed iteration $k$, grid point $\boldsymbol{F} \in \mathcal{N}_{\delta,r}$ and direction $\boldsymbol{R} \in \mathcal{R}$, the linear approximation of the energy density along the line $\ell_{\delta,+}$ is evaluated for the relaxation on $\GLp(2)$.
For the relaxation on $\R^{2\times2}$ we use the same line definition as in \cite{BalKohNeuPetPet:2022:mrc}, i.e.
\begin{align} \label{eq:rankOneLine}
	\ell_{\delta}(\boldsymbol{F}, \boldsymbol{R}) = \left\{\boldsymbol{F} + l \,\delta \,\boldsymbol{R} \mid l \in \mathbb{Z} \right\} \cap \operatorname{conv}(\mathcal{N}_{\delta, r}).
\end{align}
This one-dimensional evaluation along the specific rank-one direction~$\boldsymbol{R}$ is then convexified by a computational geometry algorithm, namely, Graham's scan \cite{Gra:1972:ead}, which was first applied in the context of continuum damage mechanics with an adaptive extension in \cite{KohNeuMelPetPetBal:2022:acm}.
Again, the energy density is interpolated whenever points are encountered that are not part of the grid.
After iterating over all directions $\boldsymbol{R}$ the line that yields the lowest relaxed value is selected for the current lamination iteration $k$.

In \cite{KohNeuPetPetBal:2024:hrs} a new algorithm, called HROC, was presented which, in the general case, lacks the convergence properties of the previously presented algorithm of \cite{BalKohNeuPetPet:2022:mrc}. However, there exists a subclass of functions for which the algorithm is convergent. The advantage of this algorithm is its linear complexity (in terms of convexification grid points $N$).

All computations based on these algorithms were performed with the free and open source numerical relaxation code NumericalRelaxation.jl, which is available on GitHub.

\subsubsection{Microstructure reconstruction}
The convexification approach above, in particular the HROC-algorithm \cite{KohNeuPetPetBal:2024:hrs}, has been shown to perform well in relevant macroscopic boundary value problems with concurrent convexification at the integration points. Although not necessary, a graphical illustration of possible microstructures in line with the rank-one convex envelopes might be useful. Then, a representative microscopic deformation field $\varphi$ on a domain $\Omega$ can be determined, based on a fixed oscillatory gradient $\boldsymbol{F}^{\pm}$, which is described by the gradient Young measure corresponding to the lamination.
A detailed description of this procedure is given in \cite[Section 3.3]{KohNeuPetPetBal:2024:hrs}.

It is important to note that the relaxation method does not encode a length scale, but only the ratio of the occurring deformation gradients.
Thus, the width of the laminates or their frequency can be arbitrarily chosen, e.g.~in order to obtain meaningful plots.
However, this choice can lead to mesh-dependent behavior when computing microstructure deformations directly, as shown in e.g.~\cite[Fig. 10]{BarCarHacHop:2004:erm}, \cite[Section 3.1]{KumarEtAl2020}. Therefore, the microstructures presented below should only be seen as one possible realization. Note also that the oscillating microstructure deformation $\varphi$ does not strictly satisfy the homogeneous Dirichlet boundary condition $\varphi(x)=Fx$ on $\partial\Omega$ for finite laminations. Therefore, laminate structures do not necessarily emerge in numerical relaxation approaches based on minimizing the energy directly under boundary conditions, even if $RW(F)=QW(F)$, as shown in Sections~\ref{section:FEM}~and~\ref{section:PINN}.


\begin{figure}[h!]
    \centering
    \includegraphics{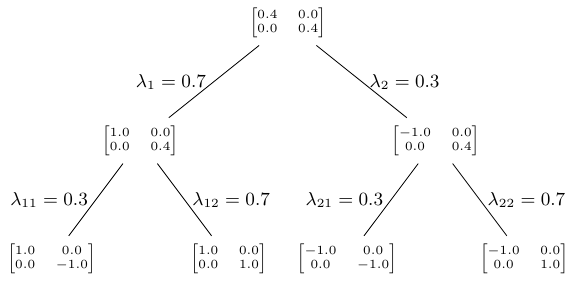}
    \caption{Lamination tree representation of $\mathcal{H}_n$ sequence at uniform compression deformation gradient $0.4\cdot\id_2$.}
    \label{fig:biot-tree-R}
\end{figure}

\subsubsection{Numerical relaxation on $\R^{2\times 2}$}

The energy density $\WBiot$ on the subspace of diagonal matrices in $\R^{2\times 2}$ is shown in Fig.~\ref{fig:biot-hseq-R}.
The relaxation exhibits a zero energy plane in this case, which is spanned in the $F_{11}$--$F_{22}$ plane by the four points $(-1,-1),(-1,1),(1,-1),(1,1)$.
For a given uniform compression deformation gradient $F_0=0.4\cdot\id_2$ the $\mathcal{H}_n$-sequence is shown in Fig.~\ref{fig:biot-hseq-R}.
Note that a permutation of the sequence would yield the same energetic value.
From a lamination perspective, this can be seen as changing the direction of the first and second order laminates in the lamination tree of Fig.~\ref{fig:biot-tree-R}.
Note that this relaxation on $\R^{2\times 2}$ cannot be appropriately represented as a classical deformation gradient field $\grad\varphi$, since some of the laminates (e.g.\ $\diag(-1.0,0.4)$), have negative determinant. This underlines that quasiconvex and rank-one convex relaxation in solid mechanics should always be performed on $\GLp(2)$ or a subset thereof.

For the energy $\Wdist\col\R^{2\times2}$, the algorithm also results in optimal microstructures with self-intersecting laminates, e.g.\ via the rank-one convex combination
\[
	F_0 = \matr{0.4&0\\0&0.4}
	= 0.9\.\matr{0.5&-0.1\\ -0.1&0.5} + 0.1\.\matr{-0.5&0.9\\0.9&-0.5}
	\equalscolon 0.9\.F_1 + 0.1\.F_2
	\,.
\]
Here, the energy of the laminate is given by $0.9\cdot W(F_1)+0.1\cdot W(F_2)=0.68=R_{\R^{2\times2}}W(F_0)=Q_{\R^{2\times2}}W(F_0)$.

\begin{figure}[h!]
    \centering
    \includegraphics{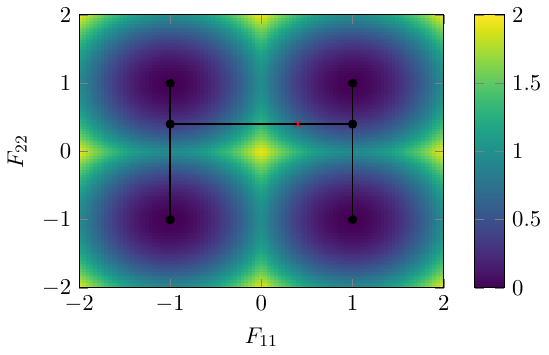}
    \caption{$\mathcal{H}_n$ sequence of Biot-type relaxation on whole $\R^{2\times 2}$ for uniform compression deformation gradient $0.4\cdot\id_2$.
    The red marker indicates the deformation gradient of interest, the black circles show the lamination points and the lines correspond to the rank-one directions.}
    \label{fig:biot-hseq-R}
\end{figure}

\subsubsection{Numerical relaxation on $\GLp(2)$}

The relaxation of $\WBiot$ on $\GLp(2)$ for the uniform compression deformation gradient $F_0=0.4\cdot\id_2$ yields several microstructures with the same optimal energetic value $0.68=R_{\GLp(2)}W(F_0)=Q_{\GLp(2)}W(F_0)$.
Therefore, the laminate tree (or equivalently the $\mathcal{H}_n$ sequence) is highly non-unique.
Fig.~\ref{fig:biot-GL+} shows a small set of possible laminate trees and possible microstructure displacements.
The used algorithm initially stores only a single minimal laminate, thus energetically equivalent laminates are ignored;
the initially found laminate corresponds to the one in Fig.~\ref{fig:tree-biot-GL+2}.

Due to the symmetry properties of the Biot-type energy density, we found qualitatively different laminates by convexifying along a predefined rank-one line.
In this relaxation case, we see qualitatively different possible microstructure fields, shown in
Figs.~\ref{fig:tree-biot-GL+2}, \ref{fig:tree-biot-GL+3}.
We expect that there are even more possible laminates that behave differently but yield the same energy.

\begin{figure*}[h!]
	\newcommand{\tikzsetfigurename}[1]{}
    \centering
    \begin{subfigure}[b]{0.225\textwidth}
        \centering
        \tikzsetfigurename{tree-biot-GL+}
\begin{tikzpicture}
	\pgfmathsetmacro\firstsize{0.9}
	\pgfmathsetmacro\verticalsize{2.0}
	\node (F) at (0,0) {\tiny 
		$\begin{bmatrix} 0.4 & 0.0 \\ 0.0 & 0.4 \end{bmatrix}$};
	\node (F1) at (-\firstsize,-\verticalsize) {\tiny
		$\begin{bmatrix} 0.4 & -0.6 \\ 0.0 & 0.4 \end{bmatrix}$};
	\node (F2) at (\firstsize,-\verticalsize) {\tiny
		$\begin{bmatrix} 0.4 & 0.6 \\ 0.0 & 0.4 \end{bmatrix}$};
	\draw (F) -- (F1) node[midway,left] {\tiny $\lambda_1=0.5$};
	\draw (F) -- (F2) node[midway,right] {\tiny $\lambda_2=0.5$};
\end{tikzpicture}
%
%
        \\[.7em]%
        \includegraphics[width=\textwidth]{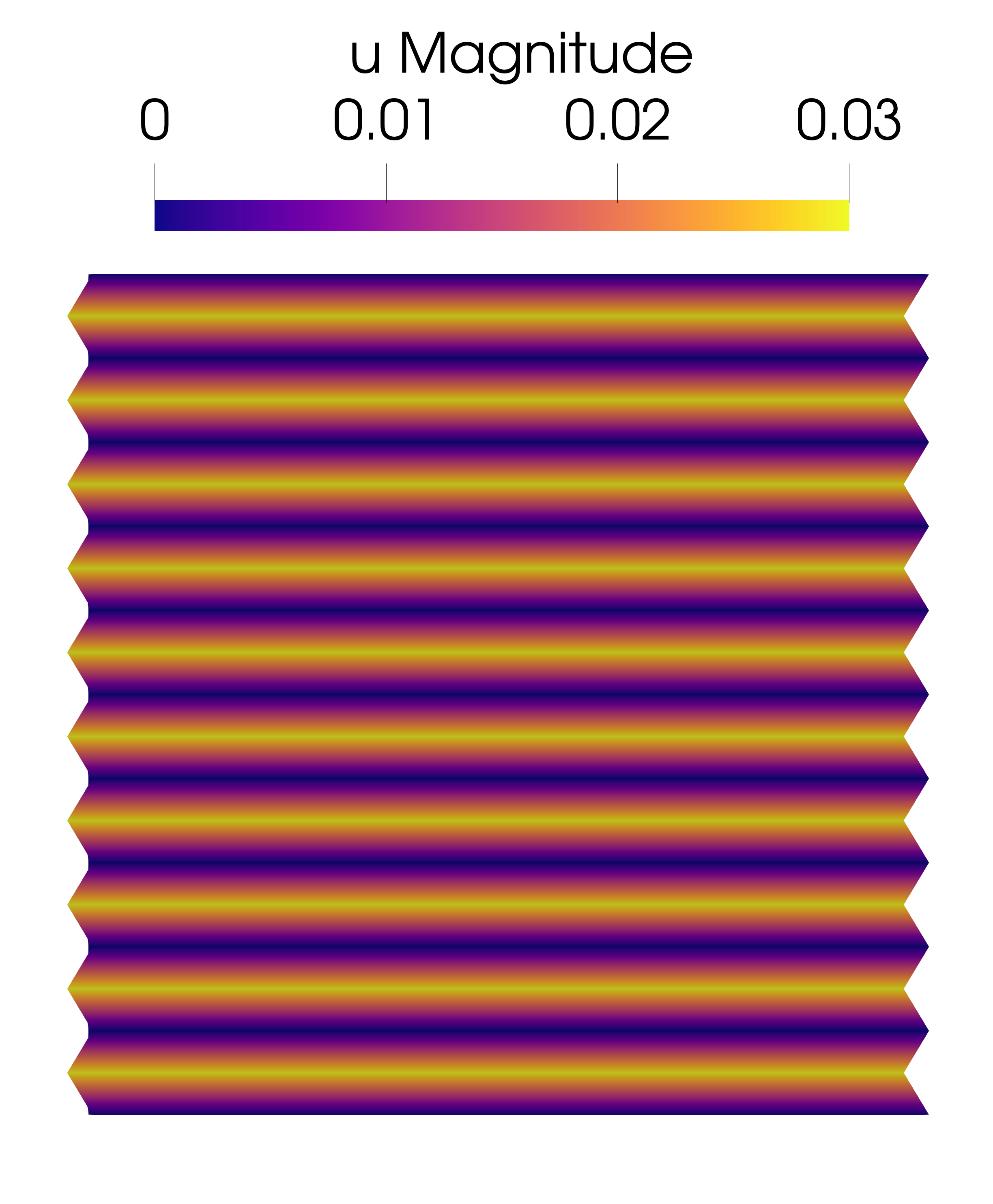}
        \caption{}
        \label{fig:tree-biot-GL+0}
    \end{subfigure}
    \hfill
    \begin{subfigure}[b]{0.225\textwidth}
        \centering
        \tikzsetfigurename{tree1}
\begin{tikzpicture}
	\pgfmathsetmacro\firstsize{0.9}
	\pgfmathsetmacro\verticalsize{2.0}
	\node (F) at (0,0) {\tiny 
		$\begin{bmatrix} 0.4 & 0.0 \\ 0.0 & 0.4 \end{bmatrix}$};
	\node (F1) at (-\firstsize,-\verticalsize) {\tiny
		$\begin{bmatrix} 0.4 & 0.0 \\ -0.6 & 0.4 \end{bmatrix}$};
	\node (F2) at (\firstsize,-\verticalsize) {\tiny
		$\begin{bmatrix} 0.4 & 0.0\\ 0.6 & 0.4 \end{bmatrix}$};
	\draw (F) -- (F1) node[midway,left] {\tiny $\lambda_1=0.5$};
	\draw (F) -- (F2) node[midway,right] {\tiny $\lambda_2=0.5$};
\end{tikzpicture}
%
%
        \\[.7em]%
        \includegraphics[width=\textwidth]{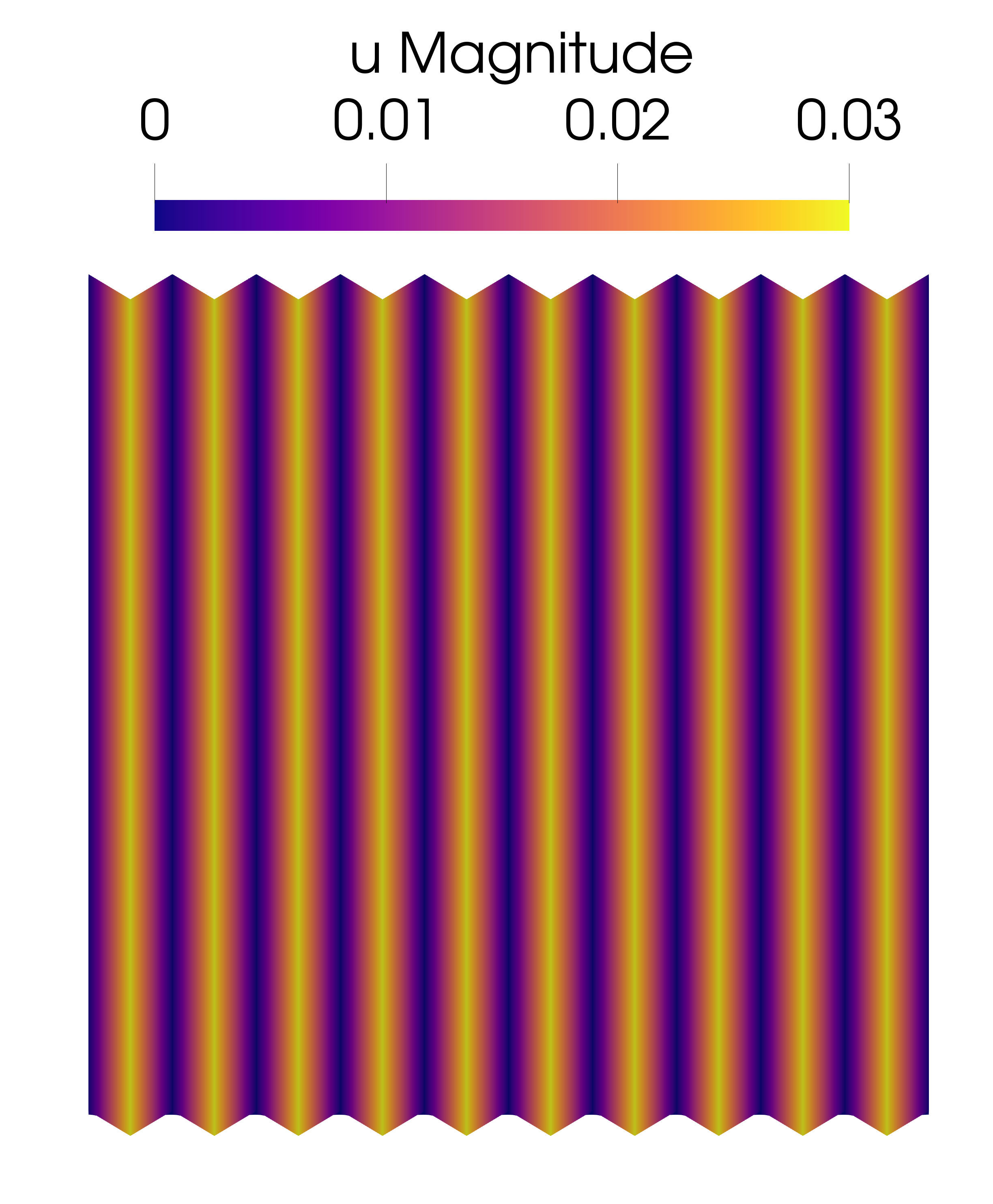}
        \caption{}
        \label{fig:tree-biot-GL+1}
    \end{subfigure}
    \hfill
    \begin{subfigure}[b]{0.225\textwidth}
        \centering
        \tikzsetfigurename{tree-biot-GL+}
\begin{tikzpicture}
	\pgfmathsetmacro\firstsize{0.9}
	\pgfmathsetmacro\verticalsize{2.0}
	\node (F) at (0,0) {\tiny 
		$\begin{bmatrix} 0.4 & 0.0 \\ 0.0 & 0.4 \end{bmatrix}$};
	\node (F1) at (-\firstsize,-\verticalsize) {\tiny
		$\begin{bmatrix} 0.1 & -0.3 \\ 0.3 & 0.7 \end{bmatrix}$};
	\node (F2) at (\firstsize,-\verticalsize) {\tiny
		$\begin{bmatrix} 0.7 & 0.3 \\ -0.3 & 0.1 \end{bmatrix}$};
	\draw (F) -- (F1) node[midway,left] {\tiny $\lambda_1=0.5$};
	\draw (F) -- (F2) node[midway,right] {\tiny $\lambda_2=0.5$};
\end{tikzpicture}
%
%
        \\[.7em]%
        \includegraphics[width=\textwidth]{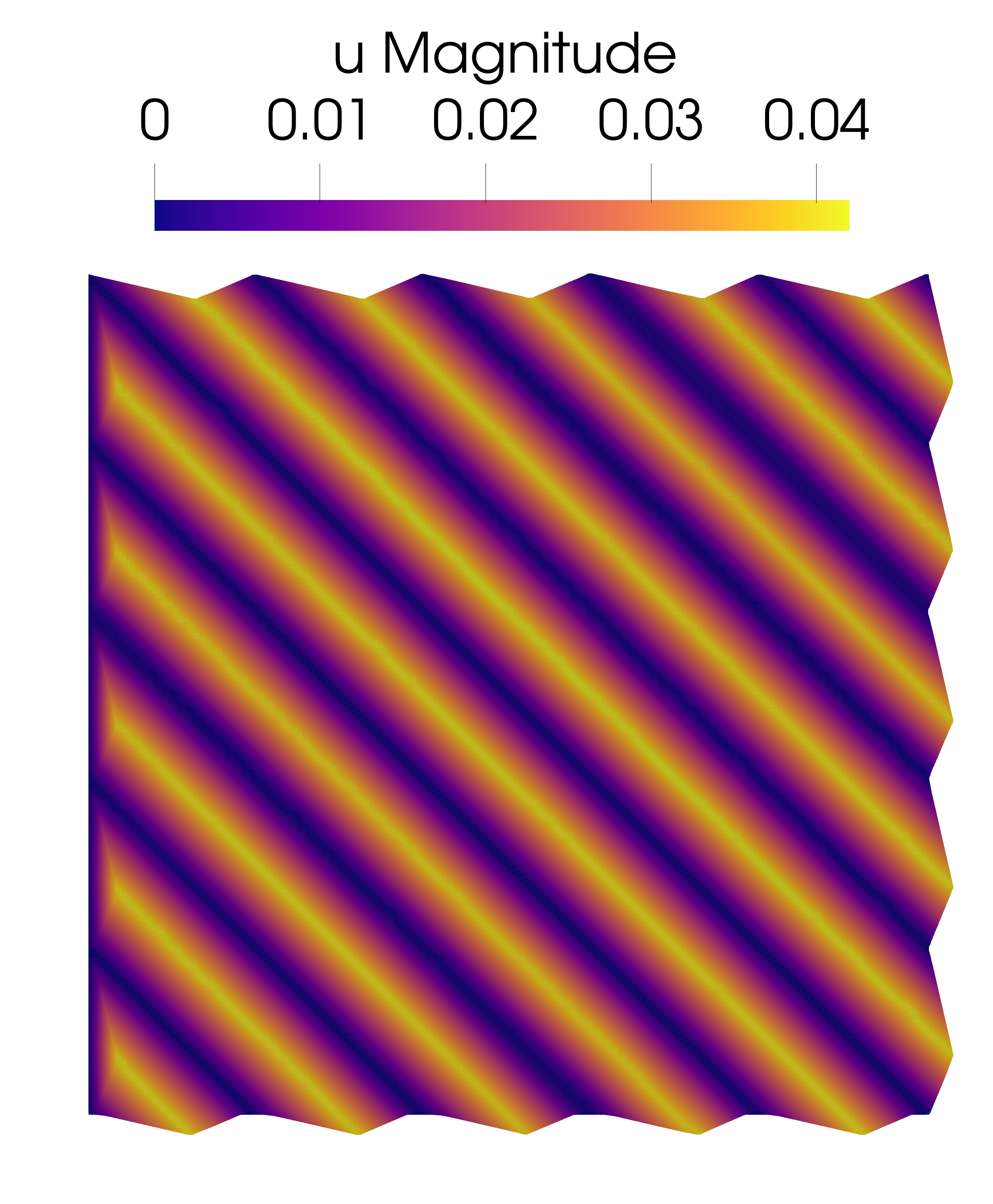}
        \caption{}
        \label{fig:tree-biot-GL+2}
    \end{subfigure}
    \hfill
    \begin{subfigure}[b]{0.225\textwidth}
        \centering
     	\tikzsetfigurename{tree-biot-GL+}
\begin{tikzpicture}
	\pgfmathsetmacro\firstsize{0.9}
	\pgfmathsetmacro\verticalsize{2.0}
	\node (F) at (0,0) {\tiny 
		$\begin{bmatrix} 0.4 & 0.0 \\ 0.0 & 0.4 \end{bmatrix}$};
	\node (F1) at (-\firstsize,-\verticalsize) {\tiny
		$\begin{bmatrix} 0.1 & 0.3 \\ -0.3 & 0.7 \end{bmatrix}$};
	\node (F2) at (\firstsize,-\verticalsize) {\tiny
		$\begin{bmatrix} 0.7 & -0.3 \\ 0.3 & 0.1 \end{bmatrix}$};
	\draw (F) -- (F1) node[midway,left] {\tiny $\lambda_1=0.5$};
	\draw (F) -- (F2) node[midway,right] {\tiny $\lambda_2=0.5$};
\end{tikzpicture}
%
%
     	\\[.7em]%
     	\includegraphics[width=\textwidth]{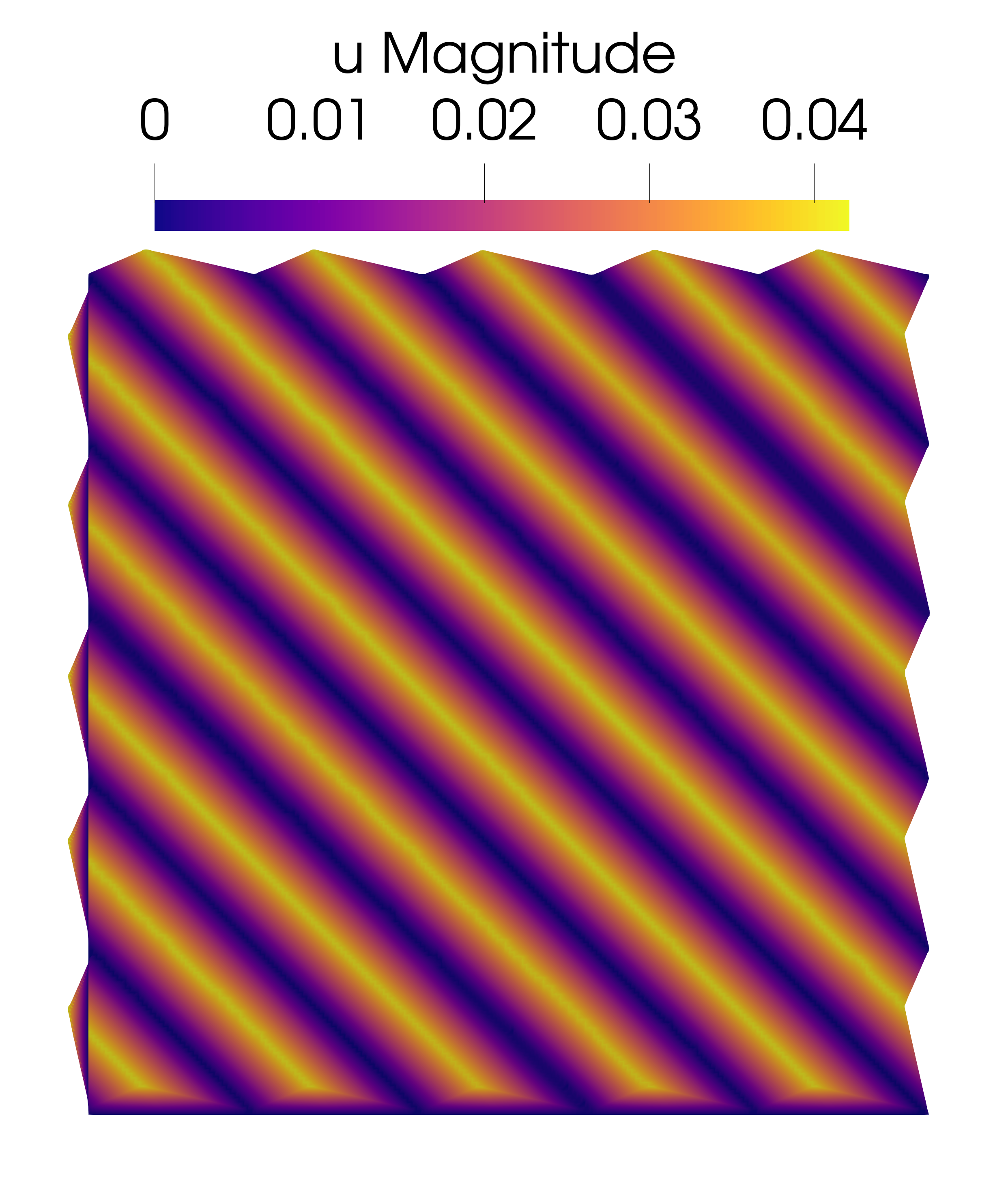}
        \caption{}
        \label{fig:tree-biot-GL+3}
    \end{subfigure}
    \caption{\small Four different laminates, all yielding the same energy $0.68=RW(F_0)=QW(F_0)$, and associated possible microstructure displacement fields for uniform compression deformation gradient $0.4 \cdot \id_2$.
    For all microstructure displacement fields the frequency is set to $10$.%
    }
    \label{fig:biot-GL+}
\end{figure*}

\let\boldsymbol\boldsymbolold%

\subsection{Comparison of numerical relaxation methods}

Table~\ref{table:numerical_results} shows the comparison between the numerically obtained approximations of the quasiconvex and rank-one convex envelopes of the Biot-type functions. Note that for the numerical simulations, the approximation of the quasiconvex envelope is obtained via
\[
	QW(F_0) \approx \frac{1}{\meas{\Omega}}\.\int_\Omega W(\grad\varphi(x))\,\dx
	\,,
\]
where $\varphi$ denotes the numerically obtained deformation, and that $\Omega=[-1,1]^2$ in both the FEM and the PINN simulations. Furthermore, in all cases considered here, the rank-one convex relaxation is already quasiconvex,\footnote{Of course, any $W\col\R^{2\times2}\to\R$ with $QW\neq RW$ would represent a counterexample to Morrey's conjecture \cite{morrey1952quasi,agn_voss2021morrey,agn_voss2021morrey2}.} i.e.\ $QW=RW$.
Therefore,
the ROC algorithm converges to the quasiconvex envelope in the chosen examples, as expected, due to its convergence properties with respect to the rank-one convex envelope approximation. The HROC algorithm also converges, as the chosen example fulfills the necessary properties to make HROC a convergent algorithm.

However, note that -- since $QW\neq RW$ in general -- any algorithm for computing the rank-one envelope directly can only provide an upper bound on $QW$ in the general case. In particular, if $W$ is already rank-one convex, then any such method is unsuitable for determining whether $W$ is also quasiconvex, whereas the methods shown in Sections~\ref{section:FEM}~and~\ref{section:PINN} might find a deformation satisfying homogeneous Dirichlet boundary conditions with an energy value below the one corresponding to the homogenoeus deformation and thus demonstrate the non-quasiconvexity.

\begin{table}[h!]
	\centering
	\renewcommand{\arraystretch}{1.47}
	\begin{tabular}{l|cccccc}%
        & FEM & PINN & ROC & HROC & $QW(F_0)=RW(F_0)$ & $W(F_0)$%
		\\\hline%
		$W=\WBiot$ on $\R^{2\times2}$&
            0.377 & 0.353 & 0 & 0 & 0 & 0.72
		\\
		$W=\Wdist$ on $\R^{2\times2}$&
            0.681 & 0.681 & 0.68 & 0.68 & 0.68 & 0.72
		\\
		$W=\WBiot$ on $\GLp(2)$&
            0.681 & 0.681 & 0.68 & 0.68 & 0.68 & 0.72
	\end{tabular}%
	\caption{\label{table:numerical_results}Comparison of numerical relaxation results for $F_0=0.4\cdot\id_2$.}
\end{table}

In all cases, the difference between the constrained (or penalized) relaxation on $\GLp(2)$ and the unconstrained relaxation on $\R^{2\times2}$ of the energy $\WBiot$ is indeed significant. However, the resulting energy levels for $\Wdist$ are independent of the determinant constraint; note again that
\[
	Q_{\GLp(2)}\Wdist(F) = Q_{\R^{2\times2}}\Wdist(F)
\]
for all $F\in\GLp(2)$ according to \eqref{eq:summary_envelope_constrained}.

While neither the FEM nor the PINN method approximates the analytical relaxation for $\WBiot$ in the unconstrained case, it should be noted that the computed energies were significantly below that of the homogeneous deformation in each case.
In particular, PINNs can be a viable approach to find violations of quasiconvexity for elastic energy potentials. In order to precisely determine the value of the quasiconvex envelope, however, classical methods such as trust-region FEM calculations or (if it is known that $RW=QW$) rank-one sequential lamination, should generally be considered more reliable. Furthermore, for practical applications, it may already be sufficient to check for rank-one convexity instead of quasiconvexity, in which case rank-one lamination using the ROC or HROC algorithms is reasonable.

\section{Conclusion}
\label{section:conclusion}
 
In nonlinear elasticity theory, strictly observing the determinant constraint $\det F>0$ on the deformation gradient $F$ is important not only when solving classical boundary value problems, but also for determining the quasiconvex envelope of energy potentials. For the planar Biot-type energy $\WBiot(F)=\norm{\sqrt{F^TF}-\id_2}^2$, the relaxation over all orientation-preserving deformations indeed differs from the unconstrained relaxation, as shown in Section~\ref{section:analyticalResults}.

Numerical techniques for computing the relaxation of non-quasiconvex energies need to take the determinant constraint into account as well. As demonstrated in Section~\ref{sec:numerical_results}, numerical methods based on rank-one convexification, finite elements or physics-informed neural networks all require the condition $\det F >0$ to be enforced explicitly either a priori or via suitable penalty terms in order to ensure that the computed energy levels approximate the correct (i.e.\ constrained) envelope.

In case of the Biot energy, however, due to the relation
\[
	Q_{\GLp(2)}\WBiot(F) = Q_{\GLp(2)}\Wdist(F) = Q_{\R^{2\times2}}\Wdist(F)
	\,,
\]
the constrained relaxation of $\WBiot$ is indeed equivalent to the unconstrained relaxation of the related energy $\Wdist$. Furthermore, as summarized in Section~\ref{section:analytical_results_summary}, the quasiconvex envelopes of $\WBiot$ and $\Wdist$ can be computed explicitly, both in the constrained and in the unconstrained case. In addition to these specific energies, our results include a general formula for the quasiconvex relaxation of Valanis-Landel-type energy functions on $\Rnn$.

\bigskip\par

\noindent{\bf Acknowledgements:}  This research of P.~Neff has been funded by the Deutsche Forschungsgemeinschaft (DFG, German Research Foundation), project ID 415894848, reference ID NE902/8-1 (P.~Neff)). We also acknowledge the DFG for funding within the Priority Program 2256 ("Variational Methods for Predicting Complex Phenomena in Engineering Structures and Materials"), project ID 441154176, reference ID BA2823/17-1.
Furthermore, the free and open source software community, especially of the Julia programming language and Ferrite.jl, as well as scientific discussions with Dennis Ogiermann, Timo Neumeier, Malte~A.~Peter, and Daniel Peterseim are gratefully acknowledged.

\medskip






\section*{References}

\printbibliography[heading=none]

\appendix

\begin{footnotesize}
	\section{Auxiliary results}
	We recall first some results regarding the convexity of some functions defined on spaces of matrices \cite{davis1957}.
	\begin{theorem}\label{Davis1}
		A unitary-invariant function (i.e. $f(U^{-1}X U)=f(X)$ for any $X\in \R^{n\times n}$ and any $n\times n$ unitary $U$) from $n\times n$ hermitian matrices to a partly-ordered real vector space is convex if and only if the corresponding symmetric function of $n$ real variables is convex.
	\end{theorem}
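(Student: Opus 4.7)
The plan is to combine the spectral theorem for Hermitian matrices with two classical majorization results: the Schur--Horn theorem and the Birkhoff--von Neumann theorem. Since the spectral theorem together with unitary invariance yields $f(X)=f(\diag(\lambda_1(X),\dots,\lambda_n(X)))$, and since permutation matrices are unitary, this value is further invariant under reordering of the eigenvalues. Hence there exists a symmetric function $g$ on $\R^n$ (taking values in the same partially ordered codomain as $f$) with $f(X)=g(\lambda_1(X),\dots,\lambda_n(X))$. The easy direction---convexity of $f$ implies convexity of $g$---then follows immediately by restricting $f$ to the subspace of diagonal matrices.

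For the nontrivial direction, assume $g$ is convex and symmetric. Given Hermitian matrices $X,Y$ and $t\in[0,1]$, set $Z=tX+(1-t)Y$ and diagonalize $Z=U D_Z U^{-1}$. Replacing $X$ by $U^{-1}XU$ and $Y$ by $U^{-1}YU$---which changes none of $f(X)$, $f(Y)$, $f(Z)$ by unitary invariance---we may assume $Z$ itself is diagonal, so that $\diag(Z)=\lambda(Z)$, and in particular
\[
	\lambda(Z) = t\,\diag(X) + (1-t)\,\diag(Y)\,.
\]
The Schur--Horn theorem asserts that $\diag(X)$ is majorized by $\lambda(X)$, and by Birkhoff--von Neumann $\diag(X)$ is then a convex combination of permutations of $\lambda(X)$. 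Combined with the symmetry and convexity of $g$, this gives $g(\diag(X))\leq g(\lambda(X))=f(X)$, and similarly $g(\diag(Y))\leq f(Y)$. A final application of convexity of $g$ yields
\[
	f(Z) = g(\lambda(Z)) \leq t\,g(\diag(X)) + (1-t)\,g(\diag(Y)) \leq t\,f(X) + (1-t)\,f(Y)\,.
\]

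The main obstacle I anticipate is the correct invocation of the Schur--Horn theorem (which is itself nontrivial) and checking that the convex-combination inequalities remain meaningful when the codomain is only a partially ordered real vector space rather than $\R$. However, each inequality used above has the form \textquotedblleft one convex combination of elements is $\leq$ another convex combination of elements,\textquotedblright\ which is well-defined in any such space compatible with the linear structure, so the argument transfers without modification. I would therefore expect the author simply to cite Davis here rather than reproduce these classical steps.
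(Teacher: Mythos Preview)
Your anticipation in the final sentence is exactly right: the paper does not prove this theorem at all. It appears in the appendix under ``Auxiliary results'' and is simply quoted from Davis \cite{davis1957} as a known fact, to be applied immediately afterwards in showing that $C\mapsto\norm{\sqrt{C}-\id_2}^2$ is convex on $\Symp(2)$.

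That said, the argument you sketch is correct and is essentially the classical proof. The reduction to diagonal $Z$ via unitary invariance, followed by Schur--Horn to compare $\diag(X)$ with $\lambda(X)$ and Birkhoff--von Neumann to turn majorization into a convex combination of permutations, is the standard route. Your remark that every inequality used is of the form ``convex combination $\leq$ convex combination'' is the right observation for extending the codomain from $\R$ to a partially ordered real vector space compatible with the linear structure; Davis's original paper makes the same point. So there is nothing to compare against in the paper itself, but your proposal would stand as a self-contained proof if one were required.
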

Theorem~\ref{Davis1} also holds for orthogonally invariant functions of real symmetric matrices, as well as for functions defined only for matrices whose spectra are restricted to a given finite or infinite interval.

Pipkin \cite{pipkin1994relaxed} first gave a formula for computing the quasiconvex envelope of stored energy functions defined on the set of real $3\times 2$ matrices that are left $\OO(3)$-invariant and convex with respect to the strain tensor. This formula was later extended to a more general case:
Using a different method than Pipkin's proof, Raoult \cite{raoult2010quasiconvex} showed the following lemma.
\begin{lemma}[\cite{raoult2010quasiconvex,pipkin1994relaxed}]\label{lemmaRP}
	Let $m\geq n$ and $Y:\R^{n\times m}\to \R$ be a left $\OO(n)$-invariant, rank-one convex mapping. Then the mapping $\widetilde{Y}\col\Symp(m)\to \R$ such that $Y(F)=\widetilde{Y}(F^TF)$ for all $F\in \R^{n\times m}$ satisfies 
	\begin{align}
	\widetilde{Y}(C)\geq \widetilde{Y}(C+S) \qquad \text{for all }\;C,S\in \Symp(m).
	\end{align}
\end{lemma}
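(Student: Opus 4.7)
The plan is to reduce the claim to a one-dimensional convexity argument along a carefully chosen rank-one direction in $\R^{n\times m}$. By a spectral decomposition $S=\sum_i s_i\,v_iv_i^T$ with $s_i\geq 0$ and a straightforward induction on $\operatorname{rank}(S)$, combined with the continuity of $\widetilde Y$ inherited from the rank-one convexity of $Y$, it suffices to treat the rank-one case $S=vv^T$ for a single $v\in\R^m$.

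Fix then $F_0\in\R^{n\times m}$ with $F_0^T F_0 = C$ (possible whenever $\operatorname{rank}(C)\leq n$, hence on a dense subset of $\Symp(m)$ since $m\geq n$). Choose $a\in\R^n$ and a unit vector $b\in\R^m$ so that $F_0^T a = \alpha\.b$ for some scalar $\alpha$; aligning $b$ with the prescribed $v$ and selecting $a$ to tune $\alpha$ produces a family of rank-one perturbations. Setting $F(t)=F_0+t\.a\otimes b$, a direct computation gives
\[
F(t)^T F(t) = C + (2\alpha t + t^2\abs{a}^2)\.b\otimes b.
\]
By left $\OO(n)$-invariance, $Y(F(t))=\widetilde Y(F(t)^T F(t))$, and by rank-one convexity of $Y$, the function $f(t)\colonequals Y(F(t))$ is convex on $\R$. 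The polynomial $2\alpha t + t^2\abs{a}^2$ vanishes exactly at $t_0=0$ and $t_1=-2\alpha/\abs a^2$, so $f(t_0)=f(t_1)=\widetilde Y(C)$; outside the closed interval with endpoints $t_0,t_1$ the polynomial is strictly positive, and convexity of $f$ forces $f(t)\geq \widetilde Y(C)$ there, while inside this interval $f(t)\leq \widetilde Y(C)$.

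Translating these one-sided bounds through the identity above yields a monotonicity statement for $\widetilde Y$ along rank-one positive semidefinite perturbations $C\pm c\.b\otimes b$, $c>0$; varying $a$ for $b$ in the row space of $F_0$ exhausts all rank-one perturbations, and the inductive step from the first paragraph then extends this to arbitrary $S\in\Symp(m)$. The main obstacle, and the reason this lemma deserves particular care, is the \emph{direction} of the resulting inequality: the construction above produces $\widetilde Y(C)\leq\widetilde Y(C+c\.b\otimes b)$ — consistent with the rank-one convex test function $Y(F)=\norm{F}^2$, for which $\widetilde Y(C)=\tr C$ is manifestly non-decreasing in the semidefinite order — which is the \emph{reverse} of the inequality as it appears in the statement. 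Reconciling this with the stated $\widetilde Y(C)\geq\widetilde Y(C+S)$ requires revisiting the formulations in \cite{raoult2010quasiconvex,pipkin1994relaxed}; the most plausible explanation is a typographical sign reversal in the statement as reproduced here, with the intended Pipkin--Raoult content being precisely the monotonicity $\widetilde Y(C)\leq\widetilde Y(C+S)$ that the argument sketched above establishes.
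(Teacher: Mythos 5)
The paper does not actually prove this lemma: it is reproduced in the appendix as an auxiliary result attributed to Raoult~\cite{raoult2010quasiconvex} and Pipkin~\cite{pipkin1994relaxed}, so there is no in-text argument to compare yours against. That said, the mechanism you use --- parametrising the rank-one line $F(t)=F_0+t\,a\otimes b$ with $F_0^Ta=\alpha b$, computing $F(t)^TF(t)=C+(2\alpha t+t^2\abs{a}^2)\,b\otimes b$, and exploiting convexity of $t\mapsto Y(F(t))$ between the two roots of the scalar quadratic --- is exactly Raoult's proof, and your execution of it is correct. Your doubt about the direction of the inequality is also well founded: the statement as reproduced in the paper is wrong. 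The example $Y(F)=\norm{F}^2$, $\widetilde Y(C)=\tr C$, already refutes $\widetilde Y(C)\geq\widetilde Y(C+S)$, and the argument you give establishes the correct monotonicity $\widetilde Y(C)\leq\widetilde Y(C+S)$ for $S\succeq 0$. This direction is also what the accompanying Pipkin/Raoult theorem actually needs: for any rank-one convex $Z\leq W$ one wants $\widetilde Z(F^TF)\leq\widetilde Z(F^TF+S)\leq\widetilde W(F^TF+S)$, which is the bound proving $QW(F)\leq\inf_S\widetilde W(F^TF+S)$.

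One small caution about your parenthetical on factoring $C=F_0^TF_0$: the condition ``$m\geq n$'' in the paper's statement is itself inverted. Pipkin's setting is $F\in\R^{3\times 2}$, i.e.\ $n=3$, $m=2$, so $m\leq n$, and the paper even remarks that the formula fails for $m>n$. For $m>n$ no positive-definite $C\in\Symp(m)$ admits a factorisation $C=F_0^TF_0$ with $F_0\in\R^{n\times m}$ at all, so your ``dense subset'' remark does not apply; under the intended hypothesis $m\leq n$, every $C\in\Symp(m)$ factors, $F_0$ has full column rank, $F_0^T$ is surjective onto $\R^m$, and your choice $F_0^Ta=\alpha b$ is available for every direction $b$ --- which is precisely where $m\leq n$ enters. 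With that correction your reduction to rank-one $S$ and the subsequent induction are sound, and the lemma as it should read is the monotonicity $\widetilde Y(C)\leq\widetilde Y(C+S)$ that you prove.
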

\noindent Here and throughout, we denote by $\Symp(m)$ the set of positive definite symmetric $m\times m$-matrices. Using this Lemma~\ref{lemmaRP}, Raoult \cite{raoult2010quasiconvex} then proved the following theorem.
	\begin{theorem}[Generalized Pipkin's formula]\label{Raoult}
		Let $W:F\in\R^{n\times m}\mapsto \R$, $m\geq n$, be a left $\OO(n)$-invariant, bounded from below stored energy function such that the associated function $\widetilde{W}\colon C\mapsto \widetilde{W}(C)$ is convex on $\Symp(m)$. Then
		\begin{align}
		Q W(F)=\inf\{S\in \Symp(m) \setvert W(F^T F+S)\}\,.
		\end{align}
	\end{theorem}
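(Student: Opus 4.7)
The plan is to prove the identity
\[
    QW(F) \;=\; \inf\bigl\{\widetilde W(F^TF+S)\setvert S\in \Symp(m)\bigr\}
\]
by establishing the two one-sided inequalities separately. The lower bound follows from the Dacorogna formula \eqref{eq:dacorogna_formula} combined with Jensen's inequality, whereas the upper bound requires an explicit oscillating construction of near-optimal test maps.

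\emph{Lower bound.} For any bounded Lipschitz domain $\Omega'\subset\R^m$ and any admissible $\vartheta\in W^{1,\infty}_0(\Omega';\R^n)$, the linear cross contributions $F^T\grad\vartheta+(\grad\vartheta)^TF$ in the Cauchy--Green tensor of $F+\grad\vartheta$ integrate to zero by the divergence theorem using $\vartheta|_{\partial\Omega'}=0$, giving
\[
    \frac{1}{\abs{\Omega'}}\int_{\Omega'}(F+\grad\vartheta)^T(F+\grad\vartheta)\,\dx
    \;=\; F^TF+S_\vartheta\,,\qquad
    S_\vartheta\colonequals\frac{1}{\abs{\Omega'}}\int_{\Omega'}(\grad\vartheta)^T\grad\vartheta\,\dx\,,
\]
where $S_\vartheta$ is positive semi-definite. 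Using $W=\widetilde W\circ(F^TF)$ together with the assumed convexity of $\widetilde W$ on $\Symp(m)$, Jensen's inequality then gives (after a mild regularization $S_\vartheta+\varepsilon\,\id_m\in\Symp(m)$ to handle the boundary of $\Symp(m)$)
\[
    \frac{1}{\abs{\Omega'}}\int_{\Omega'}W(F+\grad\vartheta)\,\dx
    \;\geq\;\widetilde W(F^TF+S_\vartheta)
    \;\geq\;\inf_{S\in\Symp(m)}\widetilde W(F^TF+S)\,.
\]
Passing to the infimum over $\vartheta$ via Dacorogna's formula yields the lower bound.

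\emph{Upper bound.} For the reverse inequality I would construct, for each $S\in\Symp(m)$, an admissible sequence of test maps whose average energy converges to $\widetilde W(F^TF+S)$. Spectrally decomposing $S=\sum_{k=1}^r\sigma_k\,v_k\otimes v_k$ with $\sigma_k>0$ and orthonormal $\{v_k\}\subset\R^m$, I would factor $F^TF+S=G^TG$ with $G\in\R^{(n+r)\times m}$ obtained by stacking $F$ on top of the rows $\sqrt{\sigma_k}\,v_k^T$, and mimic this ``higher-dimensional'' target inside $\R^n$ by an iterated rank-one lamination along the directions $v_k$ with amplitudes tied to $\sqrt{\sigma_k}$ -- equivalently, an oscillating ansatz $\vartheta_j(x)=j^{-1}\chi_j(x)\sum_k\eta_k\sin(j\,x\cdot v_k)$, cut off near $\partial\Omega'$ to satisfy the boundary condition. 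The averaged Cauchy--Green tensor of $F+\grad\vartheta_j$ converges to $F^TF+S$ while the linear cross contributions oscillate to zero, and continuity of $\widetilde W$ transfers this to the energy.

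\emph{Main obstacle.} The subtle step is turning the average Cauchy--Green identity into a pointwise energy bound: on each phase of a rank-one split the cross terms $F^T\eta_k\otimes v_k+v_k\otimes F^T\eta_k$ cannot in general be annihilated (they vanish only when $\eta_k\in\ker F^T$, which is possible only if $F$ is rank-deficient). A single-scale lamination then only matches the target Cauchy--Green on average, and convexity of $\widetilde W$ pushes the average energy \emph{up} rather than down. My plan to resolve this is to iterate the lamination on successively finer scales and to invoke Lemma~\ref{lemmaRP} applied to $Y=RW$: the associated $\widetilde{RW}$ is monotone decreasing in the positive-definite order, so the residual cross contributions generated at each lamination level do not raise the rank-one-convex (hence the quasiconvex) envelope above $\widetilde W(F^TF+S)$. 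Combined with the Dacorogna--Jensen lower bound, this closes the identity.
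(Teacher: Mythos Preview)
The paper does not prove Theorem~\ref{Raoult} at all: it is stated in the appendix as a known auxiliary result, with the proof attributed to Raoult~\cite{raoult2010quasiconvex} (building on Pipkin~\cite{pipkin1994relaxed}) and with Lemma~\ref{lemmaRP} recorded as the key ingredient. There is therefore no ``paper's own proof'' to compare against; I can only comment on whether your outline is sound and how it relates to the argument in the cited literature.

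Your Jensen argument for the bound $QW(F)\geq W^*(F)\colonequals\inf_{S}\widetilde W(F^TF+S)$ is correct and is exactly the easy half of the standard proof. For the reverse inequality $QW\leq W^*$, however, the oscillating-sequence construction is a detour that, as you yourself diagnose, cannot close on its own: no superposition of rank-one oscillations makes the Cauchy--Green tensor equal $F^TF+S$ \emph{pointwise} when $\ker F^T=\{0\}$, and averaging only recovers the Jensen direction. The literature argument bypasses explicit test maps entirely and applies Lemma~\ref{lemmaRP} directly to $Y=QW$ (rank-one convex, since quasiconvex, and left $\OO(n)$-invariant): the monotonicity of $\widetilde{QW}$ in the Loewner order together with $\widetilde{QW}\leq\widetilde W$ yields $QW(F)=\widetilde{QW}(F^TF)\leq\widetilde W(F^TF+S)$ for every $S$, hence $QW\leq W^*$. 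Your closing paragraph does reach for this idea (via $RW$, which works just as well), so the core ingredient is present --- but note that for the chain of inequalities to point the right way one needs $\widetilde{RW}$ to be monotone \emph{increasing}, not decreasing as you write; the example $Y(F)=\norm{F}^2$, $\widetilde Y(C)=\tr C$, already shows which direction holds, and the inequality in the paper's statement of Lemma~\ref{lemmaRP} appears to be a typographical slip.
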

\noindent It was also proven by Raoult that the Pipkin's formula fails for $m>n$.

\section{An alternative method for finding \boldmath$Q\WBiot$ on \boldmath$\R^{2\times2}$}
\label{appendix:alternativeProof}

\begin{lemma}
\label{lemma:schur}
	Let $C,S\in\PSymn$ such that $C$ is a diagonal matrix. Then
	\[
		\norm{\sqrt{C+S}-\id_n} \geq \norm{\sqrt{C+\diag(S)}-\id_n}\,,
	\]
	where $\diag(S) = \diag(S_{11},\dotsc,S_{nn})$.
\end{lemma}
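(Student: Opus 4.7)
The plan is to reduce the claim, via an elementary algebraic manipulation of the Frobenius norm, to a trace inequality for the square-root function, and then invoke a standard majorization argument (Schur--Horn combined with concavity of $\sqrt{\cdot}$).

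First, I would use the identity
\[
    \norm{\sqrt{M}-\id_n}^2 = \tr(M) - 2\.\tr(\sqrt{M}) + n
\]
valid for any positive semidefinite matrix $M$, applied to both $M=C+S$ and $M=C+\diag(S)$. Since
\[
    \tr(C+S) = \tr(C) + \tr(S) = \tr(C) + \sum_{i=1}^n S_{ii} = \tr(C+\diag(S)),
\]
the difference of the two squared norms equals
\[
    \norm{\sqrt{C+S}-\id_n}^2 - \norm{\sqrt{C+\diag(S)}-\id_n}^2
    = 2\.\tr\bigl(\sqrt{C+\diag(S)}\bigr) - 2\.\tr\bigl(\sqrt{C+S}\bigr),
\]
so the desired inequality reduces to showing
\[
    \tr\bigl(\sqrt{C+S}\bigr) \leq \tr\bigl(\sqrt{C+\diag(S)}\bigr).
\]
The right-hand side is particularly simple, because $C$ is diagonal, hence $C+\diag(S)$ is diagonal with entries $C_{ii}+S_{ii}=(C+S)_{ii}$, so
\[
    \tr\bigl(\sqrt{C+\diag(S)}\bigr) = \sum_{i=1}^n \sqrt{(C+S)_{ii}}.
\]

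The key remaining step is the inequality
\[
    \sum_{i=1}^n \sqrt{\lambda_i(C+S)} \;\leq\; \sum_{i=1}^n \sqrt{(C+S)_{ii}},
\]
where $\lambda_i(C+S)$ are the eigenvalues of the positive semidefinite matrix $C+S$. This follows from the Schur--Horn theorem, which states that the vector of diagonal entries of any Hermitian matrix is majorized by its vector of eigenvalues, combined with the Hardy--Littlewood--P\'olya inequality: for any concave function $f\col[0,\infty)\to\R$ and any majorization $x\prec y$, one has $\sum f(y_i)\leq \sum f(x_i)$. Applying this with $f(t)=\sqrt{t}$, which is concave on $[0,\infty)$, and with $x=\diag(C+S)$, $y=\lambda(C+S)$, yields the claim. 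Substituting back finishes the proof.

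The main obstacle is essentially bookkeeping: spotting that the linear part $\tr(M)$ cancels between the two sides so that everything reduces to a comparison of traces of square roots, and then recognizing that this is exactly the situation where Schur--Horn together with concavity of $\sqrt{\cdot}$ gives a one-line conclusion. No further structural use of the fact that $C$ is diagonal is needed beyond the identification $(C+\diag(S))_{ii} = (C+S)_{ii}$.
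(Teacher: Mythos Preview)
Your proof is correct and is essentially the same as the paper's: both rely on the Schur--Horn majorization $\diag(P)\prec\lambda(P)$ for $P=C+S$, followed by a Schur-convexity argument. The only cosmetic difference is that you first expand $\norm{\sqrt{M}-\id_n}^2=\tr(M)-2\tr(\sqrt{M})+n$, cancel the trace term, and then invoke concavity of $t\mapsto\sqrt t$, whereas the paper applies Schur-convexity directly to $g(x)=\sum_k(\sqrt{x_k}-1)^2$ via convexity of $t\mapsto(\sqrt t-1)^2$; since these two scalar functions differ by the affine map $t\mapsto t+1$, the two arguments are equivalent.
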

\begin{proof}
	Since $C+S\in\PSymn$ and $C+\diag(S)=\diag(C+S)$, it is sufficient to show that
	\[
		\norm{\sqrt{P}-\id_n}^2 \geq \norm{\sqrt{\diag(P)}-\id_n}^2
	\]
	for all $P\in\PSymn$. To this end, let $\lambda_1,\dotsc,\lambda_n>0$ denote the eigenvalues of $P$, and let $a_1,\dotsc,a_n>0$ denote the diagonal entries of $P$. Then due to a Theorem of Schur \cite{schur1923uber} (cf.~\cite[Theorem B.1]{marshall2010}),
	\[
		a \colonequals (a_1,\dotsc,a_n) \prec (\lambda_1,\dotsc,\lambda_n) \equalscolon \lambda
		\,,
	\]
	i.e.\ $a$ is majorized by $\lambda$. Furthermore, since the mapping $t\mapsto(\sqrt{t}-1)^2$ is convex on $[0,\infty)$, the function
	\[
		g\col[0,\infty)^n\to\R\,,\quad
		g(x_1,\dotsc,x_n) = \sum_{k=1}^n (\sqrt{x_k}-1)^2
	\]
	is Schur-convex \cite{schur1923uber} (cf.~\cite[Proposition C.1]{marshall2010}), which immediately implies that
	\[
		\norm{\sqrt{P}-\id_n}^2 = g(\lambda) \geq g(a) = \norm{\sqrt{\diag(P)}-\id_n}^2
		\,.
		\qedhere
	\]
\end{proof}

\begin{lemma}
	The mapping $Z\colon\Symp(2)\to \R$ given by
	\begin{align}
		Z(C)=\norm{\sqrt{C}-\id_2}^2\,.
	\end{align}
	is convex on $\Symp(2)$.
\end{lemma}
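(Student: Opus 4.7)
The plan is to apply Davis's Theorem \ref{Davis1}, which characterizes convex orthogonally invariant functions on symmetric matrices via convexity of the associated symmetric function of eigenvalues. Since $\sqrt{Q^T C Q} = Q^T \sqrt{C}\, Q$ for all $Q\in\OO(2)$ and the Frobenius norm is orthogonally invariant, $Z$ is indeed $\OO(2)$-invariant on $\Symp(2)$. Denoting the eigenvalues of $C\in\Symp(2)$ by $\mu_1,\mu_2>0$, one obtains
$$
Z(C) = (\sqrt{\mu_1}-1)^2 + (\sqrt{\mu_2}-1)^2 \equalscolon f(\mu_1,\mu_2).
$$

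The main step is then to verify convexity of the symmetric function $f$ on $(0,\infty)^2$. Since $f$ is separable, i.e.\ $f(\mu_1,\mu_2)=g(\mu_1)+g(\mu_2)$ with $g(\mu) = \mu - 2\.\sqrt{\mu} + 1$, this reduces to checking convexity of the scalar function $g$ on $(0,\infty)$, which follows directly from the elementary observation $g''(\mu) = \tfrac{1}{2}\.\mu^{-3/2}>0$. Applying Theorem \ref{Davis1} to the interval $(0,\infty)$ then yields the convexity of $Z$ on $\Symp(2)$.

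An alternative route is to rewrite $Z(C) = \tr C - 2\.\tr\sqrt{C} + 2$. Since $C\mapsto\tr C$ is linear and the additive constant is inert, the convexity of $Z$ reduces to the concavity of $C\mapsto\tr\sqrt{C}$ on $\Symp(2)$, a classical consequence of the operator concavity of the scalar square root on $[0,\infty)$; this too can be recovered by applying Theorem \ref{Davis1} to the concave symmetric function $(\mu_1,\mu_2)\mapsto \sqrt{\mu_1}+\sqrt{\mu_2}$.

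The only potential obstacle is the minor technical point that $Z$ is defined only on $\Symp(2)$ rather than on all of $\Sym(2)$; this is handled by the remark immediately following Theorem \ref{Davis1} in the excerpt, which states that Davis's criterion applies verbatim whenever the spectra of the matrices are restricted to a fixed interval (here $(0,\infty)$). Beyond this, the argument is a one-line reduction to elementary single-variable calculus, and no genuine computational difficulty arises.
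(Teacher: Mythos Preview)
Your proposal is correct and follows essentially the same route as the paper: both invoke Davis's Theorem~\ref{Davis1} to reduce the convexity of $Z$ to the convexity of the symmetric function $(\mu_1,\mu_2)\mapsto(\sqrt{\mu_1}-1)^2+(\sqrt{\mu_2}-1)^2$ on $(0,\infty)^2$, and both verify the latter via the second derivative. Your explicit use of separability to reduce to the scalar function $g(\mu)=\mu-2\sqrt{\mu}+1$ is a minor streamlining, and your computation $g''(\mu)=\tfrac12\,\mu^{-3/2}$ is in fact the correct constant (the paper's Hessian entries $\tfrac{1}{8\mu_i^{3/2}}$ contain a harmless arithmetic slip); the alternative argument via concavity of $C\mapsto\tr\sqrt{C}$ is a nice addition but not needed.
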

\begin{proof}
	According to Theorem~\ref{Davis1}, $Z$ is convex if and only if the function $w\colon\Rp\to \R$ which represents $Z$ in terms of singular values, i.e.
	\begin{align}
		{Z}({C})&=\norm{\sqrt{{C}}-\id_2}^2=\sum_{i=1}^2(\sqrt{\mu_i}-1)^2=w(\mu_1,\mu_2)\,,
	\end{align}
	where $\mu_1,\mu_2$ are the eigenvalues of ${C}$, is convex in $(\mu_1,\mu_2)$.
	In order to verify the convexity of $w$, we compute its Hessian matrix on $\R_+^2$,
	\begin{align}
	\D^2_{(\mu_1,\mu_2)}w=\left(
	\begin{array}{cc}
	\frac{ 1}{8\, \mu _1^{3/2} } & 0\\
	0 &\frac{ 1}{8\, \mu _2^{3/2} } 
	\end{array}
	\right)\in \Symp(2)
	\,,
	\end{align}
	and thereby directly observe that $w$ is convex everywhere on $\R_+^2$.
\end{proof}
\begin{proposition}\label{quasiWBiot}
	The quasiconvex envelope of
	\begin{align*}
		&\WBiot\colon\R^{2\times2}\to\R\,,\qquad
		\WBiot(F)=\norm{\sqrt{F^TF}-\id_2}^2
	\intertext{on $\Rnn$ is given by}
		Q&\WBiot(F) = \sum_{k=1}^n[\lambda_k-1]^2_+
		\,,\qquad\qquad 
		[x]_+\colonequals
		\begin{cases}
			x &\casesif x\geq 0\,,\\
			0 &\casesif x<0\,,
		\end{cases}
	\end{align*}
	for all $F\in\Rnn$ with singular values $\lambda_1,\dotsc,\lambda_n\geq0$.
\end{proposition}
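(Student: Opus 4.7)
The plan is to invoke the Generalized Pipkin Formula (Theorem \ref{Raoult}) applied to $\WBiot$ in the square planar case $n = m = 2$. Writing $\WBiot(F) = \widetilde{W}(F^TF)$ with $\widetilde{W}\col\Symp(2)\to\R$ given by $\widetilde{W}(C) = \norm{\sqrt{C} - \id_2}^2$, the three hypotheses are immediate: $\WBiot \geq 0$ is bounded below, left $\OO(2)$-invariance follows from the dependence on $F^TF$ alone, and convexity of $\widetilde{W}$ on $\Symp(2)$ is exactly the preceding lemma. Theorem \ref{Raoult} then reduces the task to computing
\[
Q\WBiot(F) = \inf_{S \in \Symp(2)} \norm{\sqrt{F^TF + S} - \id_2}^2.
\]

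Next, I would exploit isotropy to reduce to $F^TF = \diag(\lambda_1^2, \lambda_2^2)$, with $\lambda_1, \lambda_2 \geq 0$ the singular values of $F$. Applying Lemma \ref{lemma:schur} to this diagonal $C = F^TF$ shows that for any $S \in \Symp(2)$, replacing $S$ by its diagonal part $\diag(S)$ cannot increase $\norm{\sqrt{C+S} - \id_2}^2$. The infimum is therefore already attained (or approached) among diagonal matrices $S = \diag(s_1, s_2)$ with $s_1, s_2 \geq 0$, and decouples into two independent scalar minimization problems of the form $\inf_{s \geq 0} (\sqrt{\lambda^2 + s} - 1)^2$.

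A short case distinction finishes the computation: if $\lambda \geq 1$, the infimum equals $(\lambda - 1)^2$, attained at $s = 0$; if $\lambda < 1$, choosing $s = 1 - \lambda^2 \geq 0$ makes the integrand vanish. In both cases the value is $[\lambda - 1]_+^2$, and summing over $k=1,2$ produces the claimed expression.

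The main technical subtlety I anticipate is bridging between the open domain $\Symp(2)$ required by Theorem \ref{Raoult} and the closed set of positive semidefinite diagonal matrices over which the final minimization naturally takes place; this is handled by continuity, approximating any boundary candidate $S$ by $S + \varepsilon\.\id_2$ with $\varepsilon \downarrow 0$ while staying inside $\Symp(2)$. A secondary limitation is that the preceding convexity lemma is stated only for $n=2$, so the argument as written is specific to the planar case covered by the appendix; an extension to general $n$ would nevertheless follow the same scheme, since convexity of $C \mapsto \norm{\sqrt{C} - \id_n}^2$ on $\Symp(n)$ follows from Theorem \ref{Davis1} once one observes that the symmetric function $(\mu_1, \dotsc, \mu_n) \mapsto \sum_k (\sqrt{\mu_k} - 1)^2$ has a diagonal, strictly positive Hessian on $\R_+^n$.
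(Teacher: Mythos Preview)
Your proposal is correct and follows essentially the same route as the paper's proof: apply the generalized Pipkin formula (Theorem~\ref{Raoult}) after verifying left $\OO(2)$-invariance and convexity of $C\mapsto\norm{\sqrt{C}-\id_2}^2$, reduce to diagonal $C$ by orthogonal invariance, use Lemma~\ref{lemma:schur} to restrict the infimum to diagonal $S$, and finish with the decoupled scalar minimization. Your direct case analysis for the scalar problem is a slight streamlining of the paper's KKT-style optimality conditions, and your explicit remark on the $\Symp(2)$ versus positive semidefinite boundary issue is a technical point the paper glosses over, but otherwise the arguments coincide.
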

\begin{proof}
	We can express $\WBiot$ as $\WBiot(F)=Z({C})$, where ${C}=F^TF\in \Symp(2)$ with
	$
	{Z}({C})=\norm{\sqrt{{C}}-\id_2}^2\,.
	$
	Clearly, ${Z}$ is convex as a function of ${C}$ on $\Symp(2)$. By Pipkin's formula \cite{pipkin1993convexity} (cf.~Theorem~\ref{Raoult}),
	\begin{align}
	QZ(F)=\inf_{S\in \Symp(2)}{Z}({C}+S)\,,\qquad \qquad {C}=F^TF\in \Symp(2)\,.
	\end{align}
	Hence, to compute the quasiconvex envelope of $\WBiot$, we need to compute $\inf_{S\in \Symp(2)}{Z}({C}+S)$. The calculations are similar to those given in \cite{le1995quasiconvex} where the quasiconvex envelope of the St.\ Venant-Kirchhoff energy is computed:
	Let us fix a $C\in \Symp(2)$ and define
	\[
		J_C\col\Symp(2)\to \R\,,\quad J_C(S)=Z(C+S)\,.
	\]
	Due to the convexity of $C\mapsto Z(C)$, the function $S\mapsto J_C(S)$ is convex as well. In addition, since
	\begin{align}
	\lVert \sqrt{C}\rVert-2\leq	\lVert \sqrt{C}-\id_2\rVert
	\end{align}
	and
	\begin{align}
	\lVert {C}\rVert=[\lambda_1^2(C) +\lambda_2^2(C)]^{\frac{1}{2}}\leq [\lambda_1(C) +\lambda_2(C)]=\lVert \sqrt{C}\rVert^2\,,
	\end{align}
	we deduce that
	\begin{align}
	\lVert {C}\rVert^{\frac{1}{2}}-2\leq	\lVert \sqrt{C}-\id_2\rVert,
	\end{align}
	i.e. that $C\mapsto Z(C)$ is coercive on $\Symp(2)$. The same property (coercivity) will be transmitted to $S\mapsto J_C(S)$. 
	
	Since $C$ is fixed for now,
	\begin{align}
	\norm{\sqrt{C+S}-\id_2}^2=\norm{Q^T(\sqrt{C+S}-\id_2)Q}^2=
	\norm{\sqrt{Q^T(C+S)Q}-Q^TQ}^2=\norm{\sqrt{Q^TCQ+Q^TSQ}-\id_2}^2\,,
	\end{align}
	and since $Q^T S Q\in \Symp(2)$ for all $S\in \Symp(2)$ and $Q\in O(2)$, we may assume that $C$ is a diagonal matrix of the form
	\begin{align}
	C=\diag(c_{11},c_{22})\,.
	\end{align}
	Lemma~\ref{lemma:schur} immediately implies that
	\begin{align}
	\inf_{S\in \Symp(2)}J_C(S)\geq	\inf_{(s_{11},s_{22})\in \R_+^2}J_C((\diag(s_{11},s_{22})))\,,
	\end{align}
	i.e.\ that minimizing $J_C(S)$ among semi-positive definite matrices is equivalent to minimizing $J_C(S)$ among diagonal semi-positive definite matrices.
	Therefore, we have to minimize the function $j\colon\R_+^2\to \R$ with
	\begin{align}
	j_C(s_{11},s_{22})&=J_C(\diag(s_{11},s_{22}))\notag\\&=(	\sqrt{\lambda_1(\diag(c_{11}+s_{11},c_{22}+s_{22}))}-1)^2+(		\sqrt{\lambda_2(c_{11}+s_{11},c_{22}+s_{22})}-1)^2]\notag\\
	\\&=(	\sqrt{c_{11}+s_{11}}-1)^2+(	\sqrt{c_{22}+s_{22}}-1)^2\notag
	\end{align}
	on $\R_+^2$.
	We thus have to find $s_{11},s_{22}\in \R_+$ which satisfy the optimality conditions (minimization over nonnegative orthant \cite[Page 173]{beck2014introduction})
	\begin{align}
	\langle	\D j_C(s_{11},s_{22}),(t_{11},t_{22})^T\rangle&\geq 0 \quad \text{for all} \quad (t_{11},t_{22})^T\in \R_+^2,
\notag	\\
	\langle	\D j_C(s_{11},s_{22}),(s_{11},s_{22})^T\rangle&= 0\,.
	\end{align}
	These optimality conditions are equivalent to
	\begin{alignat}{2}
	&\frac{\partial j_C}{\partial s_{11}}(s_{11},s_{22})\geq 0, \qquad\quad &&\frac{\partial j_C}{\partial s_{22}}(s_{11},s_{22})\geq 0,\notag\\
	&\frac{\partial j_C}{\partial s_{11}}(s_{11},s_{22})\, s_{11}= 0, \qquad &&\frac{\partial j_C}{\partial s_{22}}(s_{11},s_{22})\, s_{22}= 0.
	\end{alignat}
	Since
	\begin{align}
	\D j_C(s_{11},s_{22})=\left(
	\displaystyle\begin{array}{c}
	\frac{\sqrt{c_{11}+s_{11}}-1}{\sqrt{c_{11}+s_{11}}} \vspace{1.5mm}\\
	\frac{\sqrt{c_{11}+s_{11}}-1}{\sqrt{c_{11}+s_{11}}} \\
	\end{array}
	\right)\,,
	\end{align}
	these optimality conditions read
	\begin{alignat}{2}
	&1-\frac{1}{\sqrt{c_{11}+s_{11}}}\geq 0,\qquad \quad &&1-\frac{1}{\sqrt{c_{22}+s_{22}}}\geq 0,\notag\\ \Bigl(&1-\frac{1}{\sqrt{c_{11}+s_{11}}}\Bigr)s_{11}= 0, 
	\quad &\Bigl(&1-\frac{1}{\sqrt{c_{11}+s_{22}}}\Bigr)s_{22}= 0\,.
	\end{alignat}
	We distinguish three cases:
	\begin{itemize}
		\item If $c_{11}<1$ and $c_{22}<1$, since $s_{11}=0$ and $s_{22}=0$ do not satisfy the first two optimality condition,
		\begin{align}s_{11}=1-c_{11},\qquad s_{22}=1-c_{22}\,,
		\end{align}
		which satisfies the first two optimality conditions as well, and
		\begin{align}\min_{(s_{11},s_{22})\in \R^2_+}
		j_C(s_{11},s_{22})=0=[(	\sqrt{c_{11}}-1)^2]_++[(	\sqrt{c_{22}}-1)^2]_+\,.
		\end{align}
		\item If $c_{11}>1$ and $c_{22}>1$, then the first two optimality conditions are strictly satisfied and we have to choose 
		\begin{align} s_{11}=0,\quad s_{22}=0
		\end{align}
		and
		\begin{align}\min_{(s_{11},s_{22})\in \R^2_+}
		j_C(s_{11},s_{22})&=(	\sqrt{c_{11}}-1)^2+(	\sqrt{c_{22}}-1)^2=\norm{\sqrt{{C}}-\id_2}^2\notag\\&=[(	\sqrt{c_{11}}-1)^2]_++[(	\sqrt{c_{22}}-1)^2]_+\,.
		\end{align}
		\item If $c_{11}>1$ and $c_{22}<1$, then from the same reasons as above, we have to choose 
		\begin{align} s_{11}=0\quad \text{and}\quad s_{22}=1-c_{22}
		\end{align}
		and
		\begin{align}\min_{(s_{11},s_{22})\in \R^2_+}
		j_C(s_{11},s_{22})&=(	\sqrt{c_{11}}-1)^2+(	\sqrt{0}-1)^2=\norm{\sqrt{{C}}-\id_2}^2\notag\\
		&=[(	\sqrt{c_{11}}-1)^2]_++[(	\sqrt{c_{22}}-1)^2]_+\,.
		\end{align}
		
		\item If $c_{11}<1$ and $c_{22}>1$, then we have to choose 
		\begin{align} s_{11}=1-c_{11}\quad \text{and}\quad s_{22}=0
		\end{align}
		and
		\begin{align}\min_{(s_{11},s_{22})\in \R^2_+}
		j_C(s_{11},s_{22})=\Big[(	\sqrt{0}-1)^2+(	\sqrt{c_{22}}-1)^2\Big]=[(	\sqrt{c_{11}}-1)^2]_++[(	\sqrt{c_{22}}-1)^2]_+\,.
		\end{align}
	\end{itemize}
	Replacing $c_{11}$ and $c_{22}$ by the eigenvalues of $C$, $0 \leq \lambda_1(C)\leq \lambda_2(C)$, i.e.~by the singular values $0\leq \vartheta_1(F)\leq \vartheta_2(F)$ of $F$, completes the proof.
\end{proof}

\begin{remark}
	The extension of this relaxation result to the Biot energy energy $\mu\,\norm{\sqrt{F^T F}-\id_3}+\frac{\lambda}{2}\,[\tr(\sqrt{F^T F}-\id_3)]
^2$ is not possible with the help of the Pipkin formula \cite{pipkin1994relaxed}, since the trace term is not a convex function of $C$.\end{remark}

\end{footnotesize}

\end{document}